\newtheorem{theorem}{Theorem}[section]
\newtheorem{lemma}[theorem]{Lemma}
\newtheorem{corollary}[theorem]{Corollary}
\newtheorem{proposition}[theorem]{Proposition}
\newtheorem{lemdef}[theorem]{Lemma and Definition}
\theoremstyle{definition}
\newtheorem{definition}[theorem]{Definition}
\newtheorem{example}[theorem]{Example}
\theoremstyle{remark}
\newtheorem{remark}[theorem]{Remark}
\newcommand{\Xcal}{\ensuremath{\mathcal{X}}}
\newcommand{\Tcal}{\ensuremath{\mathcal{T}}}
\newcommand{\Fcal}{\ensuremath{\mathcal{F}}}
\newcommand{\Ccal}{\ensuremath{\mathcal{C}}}
\newcommand{\Nbb}{\ensuremath{\mathbb{N}}}
\newcommand{\Ucal}{\ensuremath{\mathcal{U}}}
\newcommand{\Vcal}{\ensuremath{\mathcal{V}}}
\newcommand{\Kbb}{\mathbb{K}}
\newcommand{\Zbb}{\mathbb{Z}}
\newcommand{\ra}{\rightarrow}
\numberwithin{equation}{section}
\begin{document}
\title{Silting modules}
\author{Lidia Angeleri H\"ugel, Frederik Marks, Jorge Vit{\'o}ria}
\address{Lidia Angeleri H\"ugel, Dipartimento di Informatica - Settore di Matematica, Universit\`a degli Studi di Verona, Strada le Grazie 15 - Ca' Vignal, I-37134 Verona, Italy} \email{lidia.angeleri@univr.it}
\address{Frederik Marks, Institut f\"ur Algebra und Zahlentheorie, Universit\"at Stuttgart, Pfaffenwaldring 57, 70569 Stuttgart, Germany}
\email{marks@mathematik.uni-stuttgart.de}
\address{Jorge Vit\'oria, Dipartimento di Informatica - Settore di Matematica, Universit\`a degli Studi di Verona, Strada le Grazie 15 - Ca' Vignal, I-37134 Verona, Italy}
\email{jorge.vitoria@univr.it}
\maketitle
\begin{center} {Dedicated to the memory of Dieter Happel} \end{center}
\begin{abstract}
We introduce the new concept of silting modules. These modules generalise  tilting modules over an arbitrary ring, as well as  support $\tau$-tilting modules over a finite dimensional algebra recently introduced by Adachi, Iyama and Reiten.
We show that silting modules generate torsion classes that provide left approximations, and that every partial silting module admits an analogue of the Bongartz complement. Furthermore, we prove that silting modules are in bijection with 2-term silting complexes and with
certain t-structures and co-t-structures in the derived module category. We also see how some of these bijections hold for silting complexes of arbitrary finite length.
\end{abstract}

\section{Introduction}
The concept of tilting is fundamental in representation theory to compare categories of modules or their derived categories. Tilting modules first appeared  in the study of finite dimensional representations of quivers, and they have been generalised in many different ways. Our aim is to bridge together some of these approaches.

Large tilting modules over arbitrary rings  were introduced in \cite{CT}. They play an important role in approximation theory, since they correspond bijectively to the  torsion classes that provide special preenvelopes (see \cite{ATT}).  They also shed a new light on the Homological Conjectures (see for example \cite{AT0,AHT}).
Moreover, these modules are intimately related to localisation, both on the level of module categories and  derived categories. Indeed, over some rings there is an explicit description of all tilting modules using techniques from localisation theory (see for example \cite{AS1,AS2,APST}). 

Silting complexes were first introduced by Keller and Vossieck (\cite{KV}) to study t-structures in the bounded derived category of representations of Dynkin quivers. They generalise tilting complexes - and, thus, finitely  generated tilting modules - in the sense that the associated t-structures yield hearts that are not necessarily derived equivalent to the initial algebra. 
The topic resurfaced recently, in particular through the work of Aihara and Iyama (\cite{AI}),   Keller and Nicol\'as (\cite{KN}), Koenig and Yang (\cite{KY}),  and Mendoza, S\'aenz, Santiago and Souto Salorio (\cite{MSSS}). In \cite{AI}, it was shown that silting complexes over a finite dimensional algebra form a class of objects where mutation can always be performed - contrary to the classical setup of tilting modules.  In fact, mutation requires the existence of exactly two complements for any  almost complete tilting module - a condition which is not always fulfilled, but which can be provided by passing to the class of silting complexes. Furthermore,  in \cite{KN,KY,MSSS}, correspondences relating silting complexes, t-structures and co-t-structures were established,  extending previous work of Hoshino, Kato and Miyachi on 2-term silting complexes and their  associated t-structures and torsion pairs (\cite{HKM}).

Support $\tau$-tilting modules are the module-theoretic counterpart of 2-term silting complexes. They were introduced over finite dimensional algebras by Adachi, Iyama and Reiten (\cite{AIR}), who showed that these modules admit mutation and that there is a mutation-preserving bijection with 2-term silting complexes. In \cite{IJY}, the notion of support $\tau$-tilting was generalised to certain categories of finitely presented functors and correspondences with 2-term silting complexes, t-structures and co-t-structures were established. The goal of this paper is to set up a theory for arbitrary rings and modules that provides a general framework for bijections of this type.

Silting modules over an arbitrary ring are intended to generalise tilting modules in a similar fashion as 2-term silting complexes generalise 2-term tilting complexes and also in the way support $\tau$-tilting modules generalise finitely generated tilting modules over a finite dimensional algebra.
This new class of modules  shares some important features with tilting theory. In particular, the torsion class associated to a silting module provides left approximations, and partial silting modules admit an analogue of the Bongartz complement. 

It turns out that silting modules are related to the class of quasitilting modules studied by Colpi, D'Este and Tonolo in \cite{C,CDT}. As a main feature, these modules  induce   half of the equivalences occurring in Brenner-Butler's classical Tilting Theorem. This forces them to be  finitely generated (\cite{T}). In our work we drop this finiteness condition, and we show that large quasitilting modules can be used to classify some torsion classes (including those generated by silting modules) which provide left approximations.  
Notice that over a finite dimensional algebra,  finitely generated silting and finitely generated quasitilting modules coincide with support $\tau$-tilting modules. Some related results are obtained in parallel work by Wei (\cite{Wei2,Wei3}).

Finally, the proposed concept of silting modules  allows to generalise the correspondences in \cite{AI} and \cite{IJY}. More precisely, we show that    for an arbitrary ring there is a bijection between (not necessarily finitely generated) silting modules and  (not necessarily compact) 2-term silting complexes.  Moreover,  every silting module gives rise to a t-structure which coincides both with the construction due to  Happel, Reiten and Smal{\o} in \cite{HRS} and with the t-structure  studied by Hoshino, Kato and Miyachi in \cite{HKM}. This enables us to prove correspondences between silting modules and certain t-structures and co-t-structures in the unbounded derived category. In fact, these bijections hold for silting complexes of any finite length, thus  extending the correspondences established in \cite{KN,KY,MSSS} to the non-compact setting.

  As mentioned above, to every partial silting module one can associate a silting module obtained by adding a suitable complement, as well as a bireflective subcategory with a corresponding ring epimorphism. In the setting  of support  $\tau$-tilting modules, this was already observed in \cite{J}. Silting theory thus provides an appropriate context for studying ring epimorphisms and localisations. This approach will be explored in a  forthcoming paper.

The structure of the paper is as follows. In section 2 we fix notation, and we recall some important definitions and motivating results for the later sections. Section 3 develops the theory of silting modules. We discuss the existence of silting approximations by passing to the more general notion of a quasitilting module. This allows to classify the  torsion classes   providing left approximations with Ext-projective cokernel (Corollary \ref{air}). 
We present some relevant examples in Section \ref{examples subsection}.  Section 4 is devoted to  silting complexes. In particular, we show how  2-term silting complexes relate with silting modules, t-structures and co-t-structures, generalising  known results established for compact silting complexes (Theorems \ref{bijections general} and \ref{bijections complexes}).

\bigskip

{\bf Acknowledgments.}  The first idea for this paper originates from discussions of the first named author with Dong Yang. 
The authors are also grateful to Qunhua Liu and  Jan \v{S}\v{t}ov\'\i\v{c}ek for further discussion on the topic. 
The first named author is partially supported by  Fondazione Cariparo, Progetto di Eccellenza ASATA.
The second named author is supported by a grant within the DFG-priority program SPP-1489.
The third named author is supported by a Marie Curie Intra-European
Fellowship within the 7th European Community Framework Programme
(PIEF-GA-2012-327376).

\section{Preliminaries}
Throughout, $A$ will be a (unitary) ring, $Mod(A)$ the category of right $A$-modules, and $Proj(A)$ (respectively, $proj(A)$) its subcategory of (finitely generated) projective modules. Modules will always be right $A$-modules. In some contexts, we will be considering algebras $\Lambda$ over an algebraically closed field $\Kbb$.

Morphisms in $Proj(A)$ will be interpreted, without change of notation, both as 2-term complexes concentrated in degrees -1 and 0 in the homotopy category $K(Proj(A))$, and as projective presentations of their cokernels.

The unbounded derived (respectively, homotopy) category of $Mod(A)$ will be denoted by $D(A)$ (respectively, $K(A)$). If we restrict ourselves to bounded or right bounded complexes, we use the usual superscripts $b$ and $-$, respectively.
The term \textit{subcategory} will always refer to a strictly full subcategory. 

For a subcategory $\Xcal$ of $D(A)$ we denote by $\Xcal^{\perp_{>0}}$ the subcategory consisting of the objects $Y$ in $D(A)$ such that $Hom_{D(A)}(X,Y[i])=0$ for all $i>0$ and all $X\in\Xcal$. Similarly, one defines $\Xcal^{\perp_{< 0}}$ and $\Xcal^{\perp_{0}}$. If the subcategory consists of a single object $X$, we write just $X^{\perp_{>0}}$, $X^{\perp_{< 0}}$, and $X^{\perp_0}$. The notation for left orthogonal subcategories is defined analogously. 

For a given $A$-module $M$, we denote by $M^\circ$ the subcategory of $Mod(A)$ consisting of the objects $N$ such that $Hom_A(M,N)=0$, and by $M^{\perp_1}$ the subcategory of $Mod(A)$ consisting of the objects $N$ such that $Ext_A^1(M,N)=0$. 
Further, $Add(M)$ denotes the additive closure of $M$ consisting  of all modules isomorphic to a direct summand of an (arbitrary) direct sum of copies of $M$, while $Gen(M)$ is the subcategory of $M$-generated modules (that is, all epimorphic images of modules in $Add(M)$), and $Pres(M)$ is the subcategory of $M$-presented modules (that is, all modules that admit an $Add(M)$-presentation).

\subsection{Tilting modules}\label{tilting subsection}
Let us begin by recalling some basic facts about (not necessarily finitely generated) tilting modules.

\begin{definition}\label{tilt}
An $A$-module $T$ is said to be \textbf{tilting} if $Gen(T)=T^{\perp_1}$, or equivalently, if $T$ satisfies the following conditions:
\begin{enumerate}
\item[(T1)] the projective dimension of $T$ is less or equal than 1;
\item[(T2)] $Ext^1_A(T,T^{(I)})=0$ for any set $I$;
\item[(T3)] there is an exact sequence
$$\xymatrix{0\ar[r]&A\ar[r]^\phi& T_0\ar[r]& T_1\ar[r]&0}$$
where $T_0$ and $T_1$ lie in $Add(T)$ (and so $\phi$ is a left $Gen(T)$-approximation).
\end{enumerate}
\end{definition}

The subcategory $Gen(T)$ is then called a \textbf{tilting class}. It is a torsion class containing all the injective modules. The notion of partial tilting module is a weakening of this condition. 
There are different definitions in the literature; here we adopt the definition proposed in \cite{CT}.

\begin{definition}\label{ptilt}
We say that an $A$-module $T$ is \textbf{partial tilting} if
\begin{enumerate}
\item[(PT1)] $T^{\perp_1}$ is a torsion class;
\item[(PT2)] $T$ lies in $T^{\perp_1}$.
\end{enumerate}
\end{definition}

Condition (PT1) implies (T1), and it is stronger than (T1) unless $T$ is finitely presented.
Furthermore, once (PT1) is satisfied, (PT2) is equivalent to $Gen(T)$ lying in $T^{\perp_1}$. In fact, also  $Gen(T)$ is then a torsion class, as we are going to see next (cf. \cite[Proposition 4.4]{C}).

\begin{lemma}\label{ext-projectivity}
If an $A$-module $T$ satisfies $Gen(T)\subseteq T^{\perp_1}$, then $(Gen(T),T^\circ)$ is a torsion pair in $Mod(A)$.
\end{lemma}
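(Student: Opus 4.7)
The plan is to verify directly the two axioms defining a torsion pair for $(Gen(T), T^\circ)$: (a) $Hom_A(M,N)=0$ for every $M\in Gen(T)$ and $N\in T^\circ$, and (b) every $X\in Mod(A)$ fits into a short exact sequence $0\to tX\to X\to X/tX\to 0$ with $tX\in Gen(T)$ and $X/tX\in T^\circ$. Axiom (a) is immediate: given any epimorphism $T^{(I)}\twoheadrightarrow M$ and any morphism $f\colon M\to N$, precomposition produces a map $T^{(I)}\to N$, which vanishes because $Hom_A(T,N)=0$; surjectivity then forces $f=0$. For (b) the natural candidate is $tX := \mathrm{tr}_T(X) = \sum_{g\colon T\to X}\Img(g)$, the trace of $T$ in $X$, which is tautologically $T$-generated, so the task reduces to proving $X/\mathrm{tr}_T(X)\in T^\circ$.

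The essential step, and the only place where the hypothesis $Gen(T)\subseteq T^{\perp_1}$ is used, is to show that $Gen(T)$ is closed under extensions. Given $0\to X\to Y\to Z\to 0$ with $X,Z\in Gen(T)$, I would fix epimorphisms $\pi_X\colon T^{(I)}\twoheadrightarrow X$ and $\pi_Z\colon T^{(J)}\twoheadrightarrow Z$. Since $Ext^1$ converts coproducts in its first variable into products, $Ext_A^1(T^{(J)},X)\cong \prod_J Ext_A^1(T,X)=0$ by hypothesis, and applying $Hom_A(T^{(J)},-)$ to the short exact sequence shows that $\pi_Z$ lifts to some $\tilde\pi_Z\colon T^{(J)}\to Y$. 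The combined map $T^{(I)}\oplus T^{(J)}\to Y$ then has image containing the submodule $X$ and mapping onto $Z$, so it is surjective by a short diagram chase; hence $Y\in Gen(T)$.

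Granted extension closure, the conclusion $X/\mathrm{tr}_T(X)\in T^\circ$ is formal. For any $g\colon T\to X/\mathrm{tr}_T(X)$, let $Y$ be the preimage of $\Img(g)$ in $X$. Then $Y$ is an extension of the $T$-generated modules $\mathrm{tr}_T(X)$ and $\Img(g)$, so $Y\in Gen(T)$ by the previous step. But $\mathrm{tr}_T(X)$ contains every $T$-generated submodule of $X$, so $Y\subseteq \mathrm{tr}_T(X)$, forcing $\Img(g)=0$ and hence $g=0$. The main obstacle is the extension-closure step: one must exploit the Ext-vanishing hypothesis for the potentially large coproduct $T^{(J)}$ rather than just for $T$ itself, and this is where the identity $Ext_A^1(T^{(J)},-)\cong\prod_J Ext_A^1(T,-)$ is decisive; everything else is routine manipulation of the trace.
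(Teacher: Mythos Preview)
Your proof is correct, and it shares with the paper the use of the trace $\tau_T(X)$ together with the hypothesis $Gen(T)\subseteq T^{\perp_1}$ to show that $X/\tau_T(X)\in T^\circ$. However, the paper reaches this conclusion by a shorter path: it simply applies $Hom_A(T,-)$ to the sequence $0\to\tau_T(X)\to X\to X/\tau_T(X)\to 0$, notes that $Hom_A(T,\tau_T(X))\to Hom_A(T,X)$ is an isomorphism by definition of the trace, and reads off $Hom_A(T,X/\tau_T(X))=0$ from the long exact sequence using $Ext^1_A(T,\tau_T(X))=0$. The paper then packages the result as the equality $Gen(T)={}^\circ(T^\circ)$ rather than verifying the two torsion-pair axioms separately. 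Your detour through extension closure of $Gen(T)$ is logically sound but unnecessary here; the paper's argument uses only $Ext^1_A(T,-)$ vanishing on a single object, avoiding the product identity $Ext^1_A(T^{(J)},-)\cong\prod_J Ext^1_A(T,-)$ altogether. Of course, extension closure follows a posteriori from being a torsion class, so nothing is gained.
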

\begin{proof}
We verify that $Gen(T)={}^\circ(T^\circ)$. Cleary, we have $Gen(T)\subseteq {}^\circ(T^\circ)$. For $M$ in ${}^\circ(T^\circ)$, consider the sequence
$$0\rightarrow \tau_T(M)\rightarrow M \rightarrow M/\tau_T(M)\rightarrow 0$$
given by the trace $\tau_T(M)$ of $T$ in $M$. Applying the functor $Hom_A(T,-)$ to the sequence and using the fact that $Ext^1_A(T,\tau_T(M))=0$, we see that $M/\tau_T(M)$ lies in $T^\circ$. Thus, we have $M\cong \tau_T(M)$, as wanted.
\end{proof}

Recall that a module $M$ in a torsion class $\Tcal$ is  \textbf{Ext-projective} in $\Tcal$ if $Ext^1_A(M,\Tcal)=0$. The condition in the lemma above  can then be rephrased by saying that $T$ is Ext-projective in $Gen(T)$.

\subsection{$\tau$-tilting modules}
Let $\Lambda$ be a finite dimensional $\Kbb$-algebra. We will denote by $\tau$ the Auslander-Reiten translation in the category $mod(\Lambda)$ of finitely generated $\Lambda$-modules. We recall the following definitions from \cite{AIR}.

\begin{definition}
A finitely generated $\Lambda$-module $T$ is said to be 
\begin{itemize}
\item \textbf{$\tau$-rigid} if $Hom_\Lambda(T,\tau T)=0$;
\item \textbf{$\tau$-tilting} if it is $\tau$-rigid and the number of non-isomorphic indecomposable direct summands of $T$ equals the number of isomorphism classes of simple $\Lambda$-modules;
\item \textbf{support $\tau$-tilting} if there is an idempotent element $e$ of $\Lambda$ such that $T$ is a $\tau$-tilting $\Lambda/\Lambda e\Lambda$-module.
\end{itemize}
\end{definition}

In order to generalise these notions to arbitrary rings $A$ and arbitrary $A$-modules, we will need a description that does not use the Auslander-Reiten translation. 

\begin{theorem}\label{tau-free description}
Let $T$ be a finitely generated $\Lambda$-module and $\sigma$ be its minimal projective presentation. 
\begin{enumerate}
\item \cite[Proposition 2.4]{AIR} A $\Lambda$-module $M$ satisfies $Hom_\Lambda(M,\tau T)=0$ if and only if the morphism of abelian groups $Hom_\Lambda(\sigma,M)$ is surjective.
\item \cite[Proposition 5.8]{AS} $T$ is $\tau$-rigid if and only if $Gen(T)\subseteq T^{\perp_1}$.
\item \cite[Corollary 2.13]{AIR} $T$ is support $\tau$-tilting  if and only if
 $Gen(T)$ consists of the $\Lambda$-modules $M$ such that $Hom_\Lambda(\widetilde{\sigma},M)$ is surjective, where $\widetilde{\sigma}$ is the projective presentation of $T$ obtained as the direct sum of $\sigma$ with the complex $(e\Lambda\rightarrow 0)$
 for a suitable idempotent element $e$ of $\Lambda$.
\end{enumerate}
\end{theorem}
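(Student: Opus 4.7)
The plan is to prove the three statements in order, with part (1) providing the key computational device for (2) and (3). Throughout, the main ingredient is the identification $\tau T = D\, Tr(T)$, where $Tr$ is the Auslander--Bridger transpose obtained from $\sigma$ and $D$ is the standard $\Kbb$-linear duality between $mod(\Lambda)$ and $mod(\Lambda^{op})$. For (1), write $\sigma: P_1 \to P_0$ and let $(-)^{*}=Hom_\Lambda(-,\Lambda)$, so that $Tr(T)$ sits in the exact sequence $P_0^{*} \to P_1^{*} \to Tr(T) \to 0$ in $Mod(\Lambda^{op})$. Using the adjunction $Hom_\Lambda(M,DN) \cong D(N \otimes_\Lambda M)$ for finitely generated $N$, the vanishing of $Hom_\Lambda(M, \tau T)$ becomes the vanishing of $Tr(T) \otimes_\Lambda M$. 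Since $P_i^{*} \otimes_\Lambda M \cong Hom_\Lambda(P_i, M)$ naturally for finitely generated projective $P_i$, tensoring the transpose sequence with $M$ yields the cokernel sequence for $Hom_\Lambda(\sigma, M)$, and the equivalence follows.

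For (2), applying $Hom_\Lambda(-, X)$ to the presentation $P_1 \to P_0 \to T \to 0$ and using $Ext^1_\Lambda(P_0, X) = 0$ shows that $Ext^1_\Lambda(T, X) = 0$ is equivalent to the surjectivity of $Hom_\Lambda(\sigma, X)$, which by (1) is equivalent to $Hom_\Lambda(X, \tau T) = 0$. Hence the inclusion $Gen(T) \subseteq T^{\perp_1}$ reformulates as the vanishing of $Hom_\Lambda(-, \tau T)$ on $Gen(T)$; by left exactness of $Hom_\Lambda(-, \tau T)$ and the fact that every module in $Gen(T)$ embeds into a product of copies of a quotient of $T^{(I)}$, this reduces to the single vanishing $Hom_\Lambda(T, \tau T) = 0$, namely $\tau$-rigidity. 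The converse is immediate from $T \in Gen(T)$.

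For (3), the crucial observation is that the augmented presentation $\widetilde\sigma = \sigma \oplus (e\Lambda \to 0)$ still has cokernel $T$, and that surjectivity of $Hom_\Lambda(e\Lambda \to 0, M)$ amounts to $Me = 0$, i.e.\ $M$ lives in $Mod(\bar\Lambda)$ for $\bar\Lambda := \Lambda/\Lambda e\Lambda$. Hence the right-hand side of (3) coincides with the analogous class defined over $\bar\Lambda$ by the induced presentation $\bar\sigma$ of $T$. If $T$ is support $\tau$-tilting with witnessing idempotent $e$, then $T$ is $\tau$-tilting over $\bar\Lambda$, and parts (1) and (2) applied over $\bar\Lambda$, combined with the classical fact that a $\tau$-tilting module satisfies $Gen(T) = T^{\perp_1}$, give the desired identification (noting $Gen_{\bar\Lambda}(T) = Gen_\Lambda(T)$ since $T$ is a $\bar\Lambda$-module). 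The main obstacle lies in the converse direction: one must select the correct idempotent $e$, canonically determined by the simples not in $Gen(T)$, and then verify the cardinality condition built into the definition of $\tau$-tilting. This is precisely the content of \cite[Corollary 2.13]{AIR}, and is where the heavier finite-dimensional combinatorics enters the proof.
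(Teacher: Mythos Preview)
Your treatment of (1) is fine, but (2) and (3) each contain a genuine error, both stemming from conflating $\mathscr{D}_\sigma$ with $T^{\perp_1}$. In (2), the step ``$Ext^1_\Lambda(T,X)=0$ is equivalent to the surjectivity of $Hom_\Lambda(\sigma,X)$'' fails whenever $\sigma$ is not a monomorphism, i.e.\ whenever $T$ has projective dimension greater than one. The presentation $P_1\to P_0\to T\to 0$ is then not short exact, and applying $Hom_\Lambda(-,X)$ identifies $Ext^1_\Lambda(T,X)$ with the cokernel of $Hom_\Lambda(P_0,X)\to Hom_\Lambda(\Img\sigma,X)$; since $Hom_\Lambda(\Img\sigma,X)$ is in general a proper subgroup of $Hom_\Lambda(P_1,X)$, the vanishing of $Ext^1_\Lambda(T,X)$ does not force $Hom_\Lambda(\sigma,X)$ to be surjective. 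Only the implication ``$Hom_\Lambda(\sigma,X)$ surjective $\Rightarrow Ext^1_\Lambda(T,X)=0$'' survives, so your chain establishes just the direction ``$\tau$-rigid $\Rightarrow Gen(T)\subseteq T^{\perp_1}$''. The converse genuinely needs the Auslander--Reiten formula, which only identifies $D\,Ext^1_\Lambda(T,X)$ with the quotient of $Hom_\Lambda(X,\tau T)$ by maps factoring through injectives, together with a further argument to recover the full $Hom$; this is what the references \cite{AS,K} supply.

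In (3), the ``classical fact that a $\tau$-tilting module satisfies $Gen(T)=T^{\perp_1}$'' is false: that is the defining property of a \emph{tilting} module and fails for $\tau$-tilting modules of projective dimension greater than one. Over the radical-square-zero Nakayama algebra on the quiver $1\rightleftarrows 2$, for instance, $T=P_1\oplus S_1$ is $\tau$-tilting but the injective $P_2$ lies in $T^{\perp_1}\setminus Gen(T)$. The correct identity over $\bar\Lambda$ is $Gen(T)=\mathscr{D}_{\bar\sigma}$, and even that is provided by \cite[Corollary 2.13]{AIR} only in $mod(\bar\Lambda)$. The paper handles the passage to $Mod(\Lambda)$ by a separate argument: both $Gen(T)$ and $\mathscr{D}_{\widetilde\sigma}$ are torsion classes in $Mod(\Lambda)$ restricting to the same torsion pair in $mod(\Lambda)$, and one then invokes uniqueness of the direct-limit extension of a torsion pair from $mod(\Lambda)$ to $Mod(\Lambda)$. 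Your argument does not address this step.
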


\begin{proof}
Statements (1) and (2) follow by similar arguments to the ones used in the given references, using a more general version of the Auslander-Reiten formula (see, for example, \cite{K}). By \cite[Corollary 2.13]{AIR}, $T$ is support $\tau$-tilting if and only if
 $gen(T):=Gen(T)\cap mod(\Lambda)$ consists precisely of the finitely generated $\Lambda$-modules $M$ such that $Hom_\Lambda(\widetilde{\sigma},M)$ is surjective. Consider the torsion pair $(gen(T),T^\circ\cap mod(\Lambda))$ in $mod(\Lambda)$. Note that the  subcategory of $Mod(\Lambda)$ formed by the $\Lambda$-modules $M$ such that $Hom_\Lambda(\widetilde{\sigma},M)$ is surjective forms a torsion class in $Mod(\Lambda)$, whose associated torsion-free class contains $T^\circ\cap mod(\Lambda)$. Moreover, by (2) and Lemma \ref{ext-projectivity}, we also have that $(Gen(T),T^\circ)$ is a torsion pair in $Mod(\Lambda)$. Our claim now follows from the fact that there is a unique torsion pair $(\Tcal,\Fcal)$ in $Mod(\Lambda)$ with $gen(T)\subseteq\Tcal$ and $T^\circ\cap mod(\Lambda)\subseteq\Fcal$, given by the direct limit closure of $(gen(T),T^\circ\cap mod(\Lambda))$ in $Mod(\Lambda)$ (compare \cite{MS}).
 \end{proof}

\subsection{Silting complexes, t-structures and co-t-structures}
Support $\tau$-tilting modules turn out to be in bijection with certain (2-term) complexes, called silting, and they are closely related  with certain t-structures and co-t-structures. Let us  recall some definitions.  First of all, for an object $X$ in $D(A)$, we say that $\{X[i]:i\in\Zbb\}$ {\textbf{generates}} $D(A)$, if whenever a complex $Y$ in $D(A)$  satisfies $Hom_{D(A)}(X[i],Y)=0$ for all $i\in\Zbb$, then $Y=0$.

\begin{definition}\label{small silting} A bounded complex of finitely generated projective $A$-modules $\sigma$ is said to be \textbf{silting} if  
\begin{enumerate}
\item $Hom_{D(A)}(\sigma,\sigma[i])=0$ for all $i>0$;
\item the set $\{\sigma[i]:i\in\Zbb\}$ generates $D(A)$. 
\end{enumerate} 
A silting complex $\sigma$ is said to be \textbf{2-silting} if $\sigma$ is a 2-term complex of projective $A$-modules.
\end{definition}

\begin{remark}\label{generating conditions}
The notion of silting complex has appeared in different references with different \textit{generation} requirements. In order to remove any ambiguity we remark that all these generation properties are equivalent.

Given a ring $A$ and an object $X$ in $D(A)$, the set $\{X[i]:i\in\Zbb\}$ generates $D(A)$ if and only if $D(A)$ is the smallest triangulated subcategory of $D(A)$ which contains $X$ and is closed under coproducts.  In fact, the if-part is clear, and the converse implication follows from \cite[Proposition 4.5]{AJS1} and \cite[Lemma 2.2(1)]{NS}.

If $X$ is compact in $D(A)$, then the two equivalent conditions above are furthermore equivalent to say that $K^b(proj(A))$ is the smallest triangulated subcategory of $D(A)$ containing $X$ and closed under direct summands (i.e., the smallest \textit{thick} subcategory containing $X$). Indeed, under this assumption $\{X[i]:i\in\Zbb\}$ is a generating set for $D(A)$. The converse holds as argued in \cite[Proposition 4.2]{AI}, using arguments of \cite{Ra} and \cite{N}.
\end{remark}

\begin{definition}\cite{BBD, Bo, Pauk}\label{t-def}
Let $D$ be a triangulated category. A \textbf{t-structure} (respectively, a \textbf{co-t-structure}) in $D$ is a pair of  subcategories $(\Vcal^{\leq 0},\Vcal^{\geq 0})$ (respectively, $(\Ucal_{\geq 0},\Ucal_{\leq 0})$) such that
\begin{enumerate}
\item $Hom_D(\Vcal^{\leq 0},\Vcal^{\geq 0}[-1])=0$ (respectively, $Hom_{D(A)}(\Ucal_{\geq 0},\Ucal_{\leq 0}[1])=0$);
\item $\Vcal^{\leq 0}[1]\subseteq \Vcal^{\leq 0}$ (respectively, $\Ucal_{\geq 0}[-1]\subseteq \Ucal_{\geq 0}$);
\item For every $X$ in $D$, there is a triangle
$$\xymatrix{Y\ar[r]&X \ar[r]&W\ar[r] & Y[1]}$$
such that $Y$ lies in $\Vcal^{\leq 0}$ and $W$ lies in $\Vcal^{\geq 0}[-1]$ (respectively, $Y$ lies in $\Ucal_{\geq 0}$ and $W$ lies in $\Ucal_{\leq 0}[1]$).
\end{enumerate}
We use the notations $\Vcal^{\leq n}:=\Vcal^{\leq 0}[-n]$, $\Vcal^{\geq n}:=\Vcal^{\geq 0}[-n]$, $\Ucal_{\geq n}:=\Ucal_{\geq 0}[-n]$ and $\Ucal_{\leq n}:=\Vcal_{\leq 0}[-n]$. A t-structure (respectively, co-t-structure) is furthermore said to be \textbf{bounded} if
$$\bigcup_{n\in\Zbb}\Vcal^{\leq n}=D=\bigcup_{n\in\Zbb}\Vcal^{\geq n}\ \ \ (\text{respectively,\ \  }\bigcup_{n\in\Zbb}\Ucal_{\geq n}=D=\bigcup_{n\in\Zbb}\Ucal_{\leq n}).$$
\end{definition}

For a t-structure $(\Vcal^{\leq 0},\Vcal^{\geq 0})$, the intersection $\Vcal^{\leq 0}\cap\Vcal^{\geq 0}$ is called the \textbf{heart} and $\Vcal^{\leq 0}$ is called the \textbf{aisle}. Note that the aisle completely determines the t-structure since $\Vcal^{\geq 0}=(\Vcal^{\leq 0})^{\perp_0}[1]$. Furthermore, for a t-structure, the triangles in axiom (3) are functorial, giving rise to \textbf{truncation functors} (see \cite{BBD}).

\begin{example}\label{example t-str}
(1) The pair $(D^{\leq 0},D^{\geq 0})$ in $D(A)$, where $D^{\leq 0}$ (respectively, $D^{\geq 0}$) is the subcategory of complexes with cohomologies lying in non-positive (respectively, non-negative) degrees, is a t-structure, called the \textbf{standard t-structure}. We denote its associated truncation functors by $\tau^{\leq n}$ and $\tau^{\geq n}$, for all $n\in\Zbb$.

\smallskip

(2) \cite{Bo, Pauk} Consider the triangulated subcategory $K_p(A)$ of $K(A)$ of homotopically projective complexes. The canonical functor from $K(A)$ to $D(A)$ is known to induce a triangle equivalence between $K_p(A)$ and $D(A)$ (see, for example, \cite{Ke}). We use this fact throughout without further mention. The pair $(K_{\geq 0},K_{\leq 0})$ in $K_p(A)$, where $K^{\geq 0}$ (respectively, $K_{\leq 0}$) is the subcategory of complexes whose negative (respectively, positive) components are zero, is a co-t-structure, called the \textbf{standard co-t-structure}. The triangles in axiom (3) can be obtained (non-functorially) using the so-called \textbf{stupid truncations}, where zero replaces the components of the complex which are outside the required bound.

\smallskip

(3) \cite[Theorem 2.1]{HRS} A torsion pair $(\Tcal,\Fcal)$ in $Mod(A)$ induces  a t-structure $(D_\Tcal^{\leq 0},D_\Fcal^{\geq 0})$  in $D(A)$ given by
$$D_\Tcal^{\leq 0}:=\{X\in D(A): H^0(X)\in\Tcal, H^i(X)=0, \forall i>0\}$$
$$D_\Fcal^{\geq 0}:=\{X\in D(A): H^{-1}(X)\in\Fcal, H^i(X)=0, \forall i<-1\}.$$

\smallskip
 
(4) \cite[Proposition 3.2]{AJS} For every object $X$ in $D(A)$ there is a t-structure $(aisle(X), X^{\perp_{<0}})$, called \textbf{the t-structure generated by} $X$, where $aisle(X)$ is the smallest coproduct-closed suspended subcategory of $D(A)$ containing $X$. Recall that an additive subcategory of $D(A)$ is called \textbf{suspended}, if it is closed under extensions and positive shifts.
\end{example}

The following theorems, which we will generalise to a larger context, relate some of the concepts introduced above.
For details on the notion of a silting t-structure we refer to \cite[Definition 4.9]{AI} and Definition \ref{silting}. For related results, see also \cite{IJY,MSSS}.

\begin{theorem}\cite[Theorem 6.1]{KY}\cite{KN}\label{KY correspondence}
Let $\Lambda$ be a finite dimensional $\Kbb$-algebra. There are bijections between
\begin{enumerate}
\item isomorphism classes of basic silting complexes in $K^b(proj(\Lambda))$;
\item bounded t-structures in $D^b(mod(\Lambda))$ whose heart is equivalent to $mod(\Gamma)$ for some $\Kbb$-algebra $\Gamma$;
\item bounded co-t-structures in $K^b(proj(\Lambda))$.
\end{enumerate}
\end{theorem}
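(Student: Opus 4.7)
The plan is to construct explicit maps in both directions between each pair of classes and verify they are mutually inverse. The central tool throughout is the \emph{t-structure generated by} $\sigma$ described in Example \ref{example t-str}(4).

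For the map from (1) to (2), given a silting complex $\sigma$ in $K^b(proj(\Lambda))$, I would consider the t-structure $(aisle(\sigma),\sigma^{\perp_{<0}})$ in $D(\Lambda)$ and restrict it to $D^b(mod(\Lambda))$. The silting vanishing condition $Hom_{D(\Lambda)}(\sigma,\sigma[i])=0$ for $i>0$ places $\sigma$ in the heart $\Hcal$. The key step is to show that $\sigma$ is a projective generator of $\Hcal$: projectivity in $\Hcal$ follows because extensions in the heart are computed as shifts in $D(\Lambda)$, and generation follows from the fact that $\{\sigma[i]:i\in\Zbb\}$ generates $D(\Lambda)$. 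Moreover, one checks that $End_\Hcal(\sigma)=End_{D(\Lambda)}(\sigma)=:\Gamma$, which is finite-dimensional since $\sigma\in K^b(proj(\Lambda))$. A Gabriel-style criterion (projective generator of an abelian category with exact coproducts) then identifies $\Hcal$ with $mod(\Gamma)$, and boundedness of the t-structure inside $D^b(mod(\Lambda))$ follows from the fact that $\sigma$ is a bounded complex of finitely generated projectives. Conversely, given a bounded t-structure with heart $\Hcal\simeq mod(\Gamma)$, I would lift the image of $\Gamma$ (a projective generator of $\Hcal$) to a complex $\sigma$ in $K^b(proj(\Lambda))$; the silting condition translates precisely into $\Gamma$ being projective in $\Hcal$ together with boundedness, and the generation condition from Remark \ref{generating conditions} corresponds to the heart generating $D^b(mod(\Lambda))$.

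For the map from (1) to (3), given a silting complex $\sigma$, I would define the co-aisle
\begin{equation*}
\Ucal_{\geq 0}:=\{X\in K^b(proj(\Lambda)) : Hom_{D(\Lambda)}(\sigma,X[i])=0 \text{ for all } i>0\}
\end{equation*}
and $\Ucal_{\leq 0}:=\Ucal_{\geq 0}[1]^{\perp_0}$. The orthogonality axiom is automatic, the shift axiom follows from the definition, and the existence of approximation triangles is proved by induction on the length of objects in $K^b(proj(\Lambda))$, using silting truncations built from $\sigma$ and its shifts; here the silting condition provides the stepwise extensions. The coheart $\Ucal_{\geq 0}\cap\Ucal_{\leq 0}$ coincides with $add(\sigma)$, which recovers $\sigma$ (up to additive equivalence). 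Boundedness of the co-t-structure is equivalent to the thick closure of $\sigma$ being all of $K^b(proj(\Lambda))$, which is the generation condition in the compact setting (Remark \ref{generating conditions}). Conversely, a bounded co-t-structure in $K^b(proj(\Lambda))$ has coheart of the form $add(\sigma)$ for a unique basic object $\sigma$, and the co-t-structure axioms force precisely the silting conditions on $\sigma$.

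The main obstacle in both directions is the construction of the approximation triangles: for the t-structure one needs to show that every object of $D^b(mod(\Lambda))$ admits a truncation with respect to $aisle(\sigma)$ landing inside $D^b(mod(\Lambda))$, and for the co-t-structure one needs to construct silting truncations inside the non-cocomplete category $K^b(proj(\Lambda))$. Both are handled by explicit inductive constructions using the silting vanishing condition, but verifying that the resulting triangles live in the required bounded subcategories requires care; this is also the point where the finite-dimensionality of $\Lambda$ (hence of $End_{D(\Lambda)}(\sigma)$) is used, ensuring the heart is $mod(\Gamma)$ rather than a larger Grothendieck category. Finally, that the three maps assemble into a commutative triangle of bijections is checked by verifying that a silting complex, its t-structure, and its co-t-structure all mutually determine each other through the identification of $\sigma$ with a projective generator of the heart and with the coheart.
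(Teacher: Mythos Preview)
The paper does not give its own proof of this statement: Theorem \ref{KY correspondence} is quoted from \cite{KY} and \cite{KN} as background, with no proof environment following it. The paper's contribution is instead the generalisation to arbitrary rings and large silting complexes (Theorem \ref{bijections general}), and the Remark immediately after that theorem explains how the general bijections $\Psi$, $\Phi$, $\Omega$ restrict to recover Theorem \ref{KY correspondence} in the compact, finite-dimensional setting. So there is no proof in the paper against which your proposal can be compared directly.

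For what it is worth, your outline is close in spirit both to the argument in \cite{KY} and to the paper's proof of Theorem \ref{bijections general}: there the t-structure attached to $\sigma$ is $(\sigma^{\perp_{>0}},\sigma^{\perp_{<0}})$ and the co-t-structure is $({}^{\perp_0}(\sigma^{\perp_{>0}}[1]),\sigma^{\perp_{>0}})$, with the restrictions to $D^b(mod(\Lambda))$ and $K^b(proj(\Lambda))$ handled via the Remark. One labelling slip in your (1)$\to$(3): the class $\{X:Hom_{D(\Lambda)}(\sigma,X[i])=0\text{ for all }i>0\}=\sigma^{\perp_{>0}}$ is closed under \emph{positive} shifts, so in the convention of Definition \ref{t-def} it plays the role of $\Ucal_{\leq 0}$, not $\Ucal_{\geq 0}$; correspondingly the other half of the co-t-structure is the \emph{left} orthogonal ${}^{\perp_0}(\Ucal_{\leq 0}[1])$, not a right orthogonal as you wrote.
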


\begin{theorem}\cite[Theorem 3.2]{AIR},\cite[Theorem 4.10]{AI}\label{bij}
Let $\Lambda$ be a finite dimensional $\Kbb$-algebra. There are bijections between
\begin{enumerate}
\item isomorphism  classes of basic support $\tau$-tilting $\Lambda$-modules;
\item isomorphism  classes of basic 2-silting complexes in $K^b(proj(\Lambda))$;
\item 2-silting t-structures $(\Ucal^{\leq 0},\Ucal^{\geq 0})$ in $D(\Lambda)$.
\end{enumerate}
\end{theorem}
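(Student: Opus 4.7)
The plan is to construct explicit maps in each pair of directions and verify they compose to identities, using Theorem \ref{tau-free description} as the main translation tool between module-theoretic and complex-theoretic formulations.

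For the bijection $(1)\leftrightarrow(2)$: given a basic support $\tau$-tilting module $T$, which by assumption is $\tau$-tilting over $\Lambda/\Lambda e\Lambda$ for some idempotent $e\in\Lambda$, I would lift a minimal projective presentation of $T$ over $\Lambda/\Lambda e\Lambda$ to a projective presentation $\sigma$ over $\Lambda$, and form the 2-term complex $\widetilde{\sigma}:=\sigma\oplus (e\Lambda\to 0)$. One verifies that $\widetilde{\sigma}$ is a 2-silting complex: rigidity $Hom_{D(\Lambda)}(\widetilde{\sigma},\widetilde{\sigma}[1])=0$ follows from Theorem \ref{tau-free description}(1)--(2), since $Hom_\Lambda(\widetilde{\sigma},T)$ is surjective and $T$ lies in $Gen(T)\subseteq T^{\perp_1}$, while generation of $K^b(proj(\Lambda))$ inside $D(\Lambda)$ follows from the numerical condition that $\widetilde{\sigma}$ has as many non-isomorphic indecomposable summands as there are simple $\Lambda$-modules. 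In the opposite direction, from a basic 2-silting complex $\sigma\colon P_1\to P_0$ one identifies the maximal common direct summand $e\Lambda$ of $P_1,P_0$ on which $\sigma$ restricts to $(e\Lambda\to 0)$ and sets $T:=H^0(\sigma)$; Theorem \ref{tau-free description}(3) then characterises $Gen(T)$ explicitly and identifies $T$ as support $\tau$-tilting over $\Lambda/\Lambda e\Lambda$.

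For the bijection $(2)\leftrightarrow(3)$: to a basic 2-silting complex $\sigma$ I would associate the t-structure $(aisle(\sigma),\sigma^{\perp_{<0}})$ generated by $\sigma$, as in Example \ref{example t-str}(4); by definition, the t-structures arising in this way are precisely the 2-silting ones. Conversely, from a 2-silting t-structure one recovers the complex by retrieving the generating silting object, which exists by assumption and is unique up to direct summands once one imposes the basic condition. Functoriality and the truncation triangles of Example \ref{example t-str} ensure this process is well-defined, and Remark \ref{generating conditions} guarantees all three natural formulations of generation agree.

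The main obstacle I expect is establishing the injectivity of $(2)\to(3)$, i.e., that basic 2-silting complexes are determined by the aisles they generate. This amounts to showing that two silting complexes generating the same aisle must be isomorphic up to direct summands, which uses the rigidity condition together with a lifting argument for membership $\sigma\in aisle(\sigma')$. A secondary technical point is to verify that the map $(1)\to(2)$ is well-defined on isomorphism classes and non-redundant, which comes down to showing that $H^0(\widetilde{\sigma})$ recovers $T$ and that the identification of the idempotent $e\Lambda$ from the common direct summands of the two terms is canonical and compatible with the basic hypothesis.
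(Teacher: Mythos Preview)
The paper does not prove this theorem: it is stated in the Preliminaries as a known result, with citations to \cite[Theorem 3.2]{AIR} and \cite[Theorem 4.10]{AI}, and no argument is given in the present paper. The authors' own contribution is to \emph{generalise} this statement to arbitrary rings and large modules via Theorems~\ref{bijections general} and~\ref{bijections complexes}; the finite-dimensional case you are sketching is then recovered a posteriori through Proposition~\ref{tau-tilting}(2) and Remark~\ref{compatible}.

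Your outline is broadly the one from \cite{AIR} and \cite{AI}, but a couple of points deserve care. For $(1)\to(2)$, the generation condition does not follow directly from ``counting indecomposable summands'': one needs the argument that the classes of the summands of a presilting complex are linearly independent in the Grothendieck group $K_0(K^b(proj(\Lambda)))$, and that a presilting complex with $|\Lambda|$ summands is therefore silting (this is \cite[Theorem 2.27 and Proposition 2.16]{AI}). For $(2)\to(3)$, the injectivity you flag is handled in the paper's generalisation by Lemma~\ref{silting t-str}, which shows that $Add(\sigma)=\Vcal^{\leq 0}\cap {}^{\perp_0}(\Vcal^{\leq 0}[1])$ recovers the silting complex from the t-structure; in the compact setting this is \cite[Theorem 4.10]{AI}. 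Finally, your description of the inverse map $(2)\to(1)$ via ``the maximal common direct summand $e\Lambda$ of $P_1,P_0$'' is not quite right: the summand $(e\Lambda\to 0)$ need not correspond to a common summand of $P_{-1}$ and $P_0$; rather, one decomposes the basic complex $\sigma$ into indecomposables and isolates those of the form $(Q\to 0)$.
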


\section{Silting modules}\label{section silting}

We want to introduce a class of modules that generalises tilting modules over arbitrary rings, and at the same time, coincides with support $\tau$-tilting modules when restricting to finitely generated modules over a finite dimensional algebra. One of the main common features of tilting and support $\tau$-tilting modules is their connection to torsion classes that provide left (and right) approximations. We therefore start by discussing the existence of such approximations. Afterwards, we define silting modules and study further properties.

\subsection{Approximations and quasitilting modules}

A crucial feature of tilting theory is that tilting classes provide special preenvelopes. 
Recall that, given  
 a subcategory $\Tcal$ of $Mod(A)$, a  \textbf{special $\Tcal$-preenvelope} of an $A$-module $M$ is a short exact sequence $$0\longrightarrow M\stackrel{\phi}{\longrightarrow} B\longrightarrow C\longrightarrow 0$$ such that $B$ lies in $\Tcal$ and   $Ext_A^1(C, \Tcal)=0$ 
(and so $\phi$ is a left $\Tcal$-approximation of $M$).

\begin{theorem}\cite[Theorem 2.1]{ATT}\label{tilting classes}
 A torsion class $\Tcal$ in $Mod(A)$ is a tilting torsion class if and only if every $A$-module admits a  special $\Tcal$-preenvelope.
\end{theorem}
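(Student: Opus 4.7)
The plan is to prove the two implications separately: the forward direction reduces to a pushout construction, while the reverse direction requires building a tilting module from the preenvelope of $A$. Starting with $(\Rightarrow)$, assume $\Tcal = Gen(T) = T^{\perp_1}$ for a tilting module $T$, and let $M$ be an arbitrary $A$-module. I would pick an epimorphism $\pi\colon A^{(\alpha)}\twoheadrightarrow M$ and take the $\alpha$-fold coproduct of the tilting sequence from (T3), yielding
$$0\to A^{(\alpha)}\to T_0^{(\alpha)}\to T_1^{(\alpha)}\to 0$$
with $T_0^{(\alpha)},T_1^{(\alpha)} \in Add(T)$. Forming the pushout of $A^{(\alpha)} \to T_0^{(\alpha)}$ along $\pi$ then produces a short exact sequence
$$0\to M\to X\to T_1^{(\alpha)}\to 0$$
in which $X$ is a quotient of $T_0^{(\alpha)}\in Add(T)$, so $X\in Gen(T)=\Tcal$, and $T_1^{(\alpha)}\in Add(T)$ satisfies $Ext^1_A(T_1^{(\alpha)},\Tcal)=0$ because $Ext^1$ commutes with coproducts in the first argument. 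This is the desired special $\Tcal$-preenvelope of $M$.

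For $(\Leftarrow)$, I would apply the hypothesis to $A$ itself to obtain a special $\Tcal$-preenvelope
$$0\to A\to T_0\to T_1\to 0$$
with $T_0\in\Tcal$ and $Ext^1_A(T_1,\Tcal)=0$, set $T:=T_0\oplus T_1$, and aim to verify that $\Tcal = Gen(T) = T^{\perp_1}$ (which by Definition \ref{tilt} makes $T$ tilting). Closure of $\Tcal$ under quotients gives $T_1=T_0/A\in\Tcal$, so $T\in\Tcal$ and hence $Gen(T)\subseteq\Tcal$. Applying $Hom_A(-,N)$ to the above extension for $N\in\Tcal$ and using $Ext^1_A(T_1,N)=0=Ext^1_A(A,N)$ yields $Ext^1_A(T_0,N)=0$, so $T\in{}^{\perp_1}\Tcal$, which gives $\Tcal\subseteq T^{\perp_1}$.

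The main obstacle is to close the circle with $T^{\perp_1}\subseteq Gen(T)$, and I expect the decisive observation to be the following. For $N\in T^{\perp_1}$, the vanishing $Ext^1_A(T_1,N)=0$ still holds because $T_1$ is a summand of $T$, so the long exact sequence obtained from applying $Hom_A(-,N)$ to $0\to A\to T_0\to T_1\to 0$ forces the map $Hom_A(T_0,N)\to Hom_A(A,N)\cong N$ to be surjective. Thus every element of $N$ lies in the image of some morphism $T_0\to N$, so $N$ is generated by $T_0$ and hence belongs to $Gen(T_0)\subseteq Gen(T)$. Concatenating the inclusions yields $\Tcal\subseteq T^{\perp_1}\subseteq Gen(T)\subseteq\Tcal$, and consequently $\Tcal=Gen(T)=T^{\perp_1}$ is a tilting torsion class.
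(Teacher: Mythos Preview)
Your proof is correct and follows exactly the standard argument from \cite{ATT}; the paper itself does not reprove this cited theorem but uses the same technique elsewhere (see the proof of Theorem~\ref{ATTanalog}, where $T=B\oplus C$ is built from the approximation sequence of $A$, and the reference to \cite[Proposition 1.2]{ATT} for the pushout construction in the forward direction). One minor quibble: saying ``$Ext^1$ commutes with coproducts in the first argument'' is imprecise---it takes coproducts to products---but the conclusion you draw from it is correct.
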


Also support $\tau$-tilting modules induce  approximation sequences,  but the map $\phi$ is not injective  in general.  
So, we now turn to torsion classes  providing left  approximations with Ext-projective cokernel. The classification of such torsion classes will lead us 
to the notion of a quasitilting module, and it will allow to recover a result from \cite{AIR} relating support $\tau$-tilting modules with functorially finite torsion classes (see Remark \ref{functfin}).

First, we recall the notion of a $\ast$-module (\cite{C}). Such modules arise in the literature as  capturing \textit{half} of the categorical equivalences of the Brenner-Butler theorem in tilting theory. In fact $\ast$-modules are precisely those $A$-modules $T$ such that the functor $Hom_A(T,-)$ induces an equivalence between $Gen(T_A)$ and $Cogen(\mathsf{D}(T)_{B})$, where $B=End_A(T)$ and $\mathsf{D}(T)$ is the dual of $T$ with respect to an injective cogenerator of $Mod(A)$. This forces them to be finitely generated (\cite{T}). For our purpose we have to drop this finiteness condition and work with the following ``large version'' of the notion of a $\ast$-module.

\begin{definition}
An $A$-module $T$ is a \textbf{$\ast$-module} if $Gen(T)= Pres(T)$, and $Hom_A(T,-)$ is exact for short exact sequences in $Gen(T)$.
\end{definition}

Quasitilting modules were introduced in  \cite{CDT} as the (self-small) $\ast$-modules $T$ for which $Gen(T)$ is a torsion class. In fact, there are many equivalent ways of defining such modules, cf.~\cite[Proposition 2.1]{CDT}. 
For a subcategory $\Ccal$ of $Mod(A)$, we denote by $\overline{\Ccal}$ the subcategory formed by the submodules of all modules in $\Ccal$.

\begin{lemdef}\label{eq def quasitilting}
The following statements are equivalent for an $A$-module $T$.
\begin{enumerate}
\item $T$ is a $\ast$-module and $Gen(T)$ is a torsion class;
\item $Pres(T)=Gen(T)$ and $T$ is Ext-projective in $Gen(T)$;
\item $Gen(T)=\overline{Gen(T)}\cap T^{\perp_1}$.
\end{enumerate}
We say that  $T$ is \textbf{quasitilting} if it satisfies any of the equivalent conditions above.
\end{lemdef}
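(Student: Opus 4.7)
The plan is to establish the three equivalences as $(1) \Leftrightarrow (2)$ and $(2) \Leftrightarrow (3)$, with the bulk of the work concentrated in $(2) \Rightarrow (3)$.

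For $(1) \Leftrightarrow (2)$, the equality $Gen(T) = Pres(T)$ is built into both conditions, so I only need to relate Ext-projectivity with the exactness of $Hom_A(T,-)$ on short exact sequences in $Gen(T)$. Under (1), a class $\eta \in Ext^1_A(T,Y)$ for $Y \in Gen(T)$ is represented by a short exact sequence $0 \to Y \to E \to T \to 0$; torsion closure gives $E \in Gen(T)$, and the $\ast$-module condition makes $Hom_A(T,E) \to Hom_A(T,T)$ surjective, lifting $1_T$ and splitting $\eta$. Under (2), the only non-trivial axiom for $Gen(T)$ being a torsion class is closure under extensions: for $0 \to X \to Y \to Z \to 0$ with $X, Z \in Gen(T) = Pres(T)$, Ext-projectivity lifts a surjection $T^{(J)} \twoheadrightarrow Z$ to $T^{(J)} \to Y$, which combined with a generating surjection $T^{(I)} \twoheadrightarrow X$ produces $T^{(I)} \oplus T^{(J)} \twoheadrightarrow Y$; exactness of $Hom_A(T,-)$ on sequences in $Gen(T)$ is then immediate from Ext-projectivity.

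For $(2) \Rightarrow (3)$, the inclusion $Gen(T) \subseteq \overline{Gen(T)} \cap T^{\perp_1}$ is clear. For the converse, given $M \in \overline{Gen(T)} \cap T^{\perp_1}$ with $M \hookrightarrow N \in Gen(T) = Pres(T)$, I would choose an $Add(T)$-presentation $0 \to K \to T^{(J)} \to N \to 0$ with $K \in Gen(T)$ and pull it back along $M \hookrightarrow N$ to obtain $0 \to K \to P \to M \to 0$ with $P \hookrightarrow T^{(J)}$; the long exact sequence combined with $Ext^1_A(T,K) = 0$ and $Ext^1_A(T,M) = 0$ forces $Ext^1_A(T,P) = 0$. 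The main obstacle, and the crux of the whole proof, is the sublemma that any submodule $P \subseteq T^{(J)}$ with $Ext^1_A(T,P) = 0$ already belongs to $Gen(T)$. To prove it I would apply the hypothesis $Pres(T) = Gen(T)$ to the quotient $T^{(J)}/P \in Gen(T)$, producing a presentation $0 \to L \to T^{(J')} \twoheadrightarrow T^{(J)}/P \to 0$ with $L \in Gen(T)$. Since $Ext^1_A(T,P) = 0$ makes $Hom_A(T,T^{(J)}) \to Hom_A(T,T^{(J)}/P)$ surjective, each $T$-summand of $T^{(J')}$ admits a lift to $T^{(J)}$, assembling into a morphism $\phi \colon T^{(J')} \to T^{(J)}$ whose composition with $T^{(J)} \twoheadrightarrow T^{(J)}/P$ recovers the chosen surjection. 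Then $\Img(\phi) + P = T^{(J)}$, and the second isomorphism theorem yields $P/(P \cap \Img(\phi)) \cong T^{(J)}/\Img(\phi) \in Gen(T)$, while a direct check identifies $P \cap \Img(\phi)$ with $\phi(L)$, a quotient of $L \in Gen(T)$, hence also in $Gen(T)$. Closure of the torsion class $Gen(T)$ under extensions then forces $P \in Gen(T)$, and $M \cong P/K$ follows as a quotient.

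Finally, $(3) \Rightarrow (2)$ is direct. Ext-projectivity is automatic since $Gen(T) \subseteq \overline{Gen(T)} \cap T^{\perp_1} \subseteq T^{\perp_1}$. For $Pres(T) = Gen(T)$, given $M \in Gen(T)$ I would consider the canonical surjection $ev \colon T^{(Hom_A(T,M))} \twoheadrightarrow M$; each $g \in Hom_A(T,M)$ factors through its own summand, so $Hom_A(T,ev)$ is surjective, and the long exact sequence together with Ext-projectivity gives $Ext^1_A(T,K) = 0$ for the kernel $K$. Since $K \hookrightarrow T^{(Hom_A(T,M))} \in Gen(T)$ already gives $K \in \overline{Gen(T)}$, condition (3) yields $K \in Gen(T)$, producing the required $Add(T)$-presentation of $M$.
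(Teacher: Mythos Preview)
Your proof is correct and follows the same overall strategy as the paper, with one detour worth noting. For $(1)\Leftrightarrow(2)$ and $(3)\Rightarrow(2)$ your arguments essentially match the paper's (the paper invokes Lemma~\ref{ext-projectivity} to get that $Gen(T)$ is a torsion class, while you verify extension closure by hand; both are fine). The difference is in $(2)\Rightarrow(3)$: the paper works directly with the given embedding $N\hookrightarrow M$, takes an $Add(T)$-presentation of the cokernel $C=M/N$, lifts it along $M\twoheadrightarrow C$ using $Ext^1_A(T',N)=0$, and reads off from the snake lemma that $N$ is an extension of two modules in $Gen(T)$. Your route instead first pulls back to a submodule $P\subseteq T^{(J)}$ and then proves a sublemma about such submodules---but your sublemma is precisely the paper's lifting-plus-extension argument, specialised to the case where the ambient module happens to be $T^{(J)}$. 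The pullback step is therefore an extra layer that the paper avoids: its argument applies directly to any $N\hookrightarrow M$ with $M\in Gen(T)$ and $N\in T^{\perp_1}$, without first reducing to $M\in Add(T)$. Both arguments are valid; the paper's is just more economical.
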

\begin{proof}
(1)$\Rightarrow$(2): We only have to show that $Ext^1_A(T,Gen(T))=0$. Consider a short exact sequence in $Mod(A)$ $$\xymatrix{0\ar[r]&M\ar[r]&N\ar[r]^g&T\ar[r]&0}$$
with $M$ in $Gen(T)$. Since $Gen(T)$ is a torsion class, $N$ lies in $Gen(T)$ and, by assumption, the  sequence remains exact when applying the functor $Hom_A(T,-)$. But then it is split exact as $1_T$ factors through $g$. 

(2)$\Rightarrow$(1): It is clear that if $T$ is Ext-projective in $Gen(T)$ then $Hom_A(T,-)$ is exact for short exact sequences in $Gen(T)$. By Lemma \ref{ext-projectivity}, $Gen(T)$ is a torsion class and, thus, we have (1).

(2)$\Rightarrow$(3): It is clear that $Gen(T)\subseteq \overline{Gen(T)}\cap T^{\perp_1}$. For the reverse inclusion, let $N$ lie in $\overline{Gen(T)}\cap T^{\perp_1}$ and let $M$ be an object in $Gen(T)$ such that there is a monomorphism $f:N\rightarrow M$. Clearly, $C:=Coker(f)$ lies in $Gen(T)$ and, thus, in $Pres(T)$. So there is a surjection $g:T^\prime\rightarrow C$   with $T^\prime$ in $Add(T)$ such that $K:=Ker(g)$ lies in $Gen(T)$. Since $Ext^1_A(T^\prime,N)=0$ by assumption, we obtain the following commutative diagram of short exact sequences:
$$\xymatrix{0\ar[r]& K\ar[r]\ar[d]^a&T^\prime\ar[d]^b\ar[r]^g&C\ar[r]\ar @{=}[d]&0\\0\ar[r]& N\ar[r]^f&M\ar[r]&C\ar[r]&0.}$$ 
Now, the snake lemma shows that $Coker(a)=Coker(b)$ and, thus, $Coker(a)$ lies in $Gen(T)$. Since $Gen(T)$ is extension-closed by Lemma \ref{ext-projectivity}, we conclude that $N$ lies in $Gen(T)$.

(3)$\Rightarrow$(2) We only need to show that $Gen(T)\subseteq Pres(T)$. Let $M$ lie in $Gen(T)$ and consider the universal map $u:T^{(I)}\rightarrow M$, where $I=Hom_A(T,M)$. Clearly $u$ is surjective, and since $Ext_A^1(T,T^{(I)})=0$, it is easy to see that $Ker(u)$ lies in $T^{\perp_1}$.
 By assumption $Ker(u)$ then lies in $Gen(T)$, so $M$ lies in $Pres(T)$.
\end{proof}

We will  require our modules to be \textbf{finendo}, i.e. finitely generated over their endomorphism ring, as this characterises the modules $T$ for which $Gen(T)$ provides left approximations (\cite[Proposition 1.2]{ATT}). Note that this is further equivalent to $Gen(T)$ being closed for direct products (\cite[Lemma on p.408]{CM}). Recall that a module is called {\textbf{faithful}}, if its annihilator is zero. The following lemma extends a result relating $\ast$-modules to tilting.

\begin{lemma}\label{finendo star}(cf.~\cite[Corollary 2]{DH}, \cite[Corollary 6]{C2}) 
An $A$-module $T$ is a finendo $\ast$-module if and only if it is a tilting $A/Ann(T)$-module.
\end{lemma}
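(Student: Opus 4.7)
The plan is to first reduce to the faithful case and then prove the two implications separately. Setting $R := A/Ann(T)$, the classes $Gen_A(T)$, $Pres_A(T)$ and the ring $End_A(T)$ coincide with their $R$-counterparts, since every $T$-generated module is annihilated by $Ann(T)$ and hence lies in $Mod(R)$. Both sides of the desired equivalence are therefore insensitive to whether we work over $A$ or over $R$, so we may assume that $T$ is faithful and that $A=R$.

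For the backward direction, assume $T$ is tilting. Since $Gen(T) = T^{\perp_1}$, the functor $Hom_A(T,-)$ is exact on short exact sequences in $Gen(T)$. For any $M \in Gen(T)$, the universal evaluation $T^{(Hom_A(T, M))} \twoheadrightarrow M$ has kernel $K$ with $Ext^1_A(T, K) = 0$ by the long exact sequence together with (T2), so $K \in T^{\perp_1} = Gen(T)$ and $M \in Pres(T)$. Hence $T$ is a $\ast$-module. Moreover, $T^{\perp_1}$ is closed under products, so $Gen(T)$ is too, which is equivalent to $T$ being finendo.

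For the forward direction, assume $T$ is a faithful finendo $\ast$-module with $B := End_A(T)$. First, faithfulness gives a ring injection $A \hookrightarrow End_B(T)^{op}$ via right multiplication, while finendoness provides generators $t_1, \ldots, t_n$ of $T$ as a left $B$-module, yielding an $A$-linear injection $End_B(T)^{op} \hookrightarrow T^n$ by evaluation $\psi \mapsto (\psi(t_1), \ldots, \psi(t_n))$; composing, one obtains $A \hookrightarrow T^n$. Next, combining the $\ast$-module hypothesis with Lemma \ref{ext-projectivity} and Lemma and Definition \ref{eq def quasitilting} shows that $T$ is quasitilting: $Gen(T)$ is a torsion class, $T$ is Ext-projective in $Gen(T)$, and $Gen(T) = \overline{Gen(T)} \cap T^{\perp_1}$. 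The crucial step is then to show that every injective $A$-module $E$ lies in $Gen(T)$: given $x \in E$, the $A$-linear map $A \to E$, $a \mapsto xa$, extends along $A \hookrightarrow T^n$ (by injectivity of $E$) to a morphism $T^n \to E$, whose $n$ components provide maps $T \to E$ whose images jointly contain $x$, so $x \in \tau_T(E)$ and $E = \tau_T(E) \in Gen(T)$. Since every $A$-module embeds into some injective, $Mod(A) = \overline{Gen(T)}$, and combining with the quasitilting identity yields $T^{\perp_1} = \overline{Gen(T)} \cap T^{\perp_1} = Gen(T)$, so that $T$ is tilting.

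The main obstacle is the quasitilting step: showing that $Gen(T)$ is closed under extensions does not follow immediately from the bare $\ast$-module definition and is the technical heart of the cited results. I expect to proceed by a trace-filtration argument: given $0 \to X \to Y \to Z \to 0$ with $X, Z \in Gen(T)$, the inclusion $X \subseteq \tau_T(Y)$ forces $Y/\tau_T(Y)$ to be a quotient of $Z$, hence in $Gen(T)$, and the $\ast$-module condition combined with the finendo assumption is used to show $\tau_T(Y/\tau_T(Y)) = 0$, so that $Y/\tau_T(Y) \in Gen(T) \cap T^\circ = 0$. The injective-generation step of the forward direction, by contrast, is a clean consequence of faithfulness once the embedding $A \hookrightarrow T^n$ is in place, and is precisely where the faithfulness hypothesis plays its decisive role, reproducing in this generality the content of \cite{DH} and \cite{C2}.
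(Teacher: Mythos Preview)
Your proof has a genuine gap in the forward direction: the order of steps matters, and you have inverted it. You invoke Lemma~\ref{ext-projectivity} to conclude that $Gen(T)$ is a torsion class (and hence that $T$ is quasitilting via Lemma and Definition~\ref{eq def quasitilting}), but Lemma~\ref{ext-projectivity} requires the hypothesis $Gen(T)\subseteq T^{\perp_1}$, which the bare $\ast$-module condition does \emph{not} provide. The $\ast$-module condition only says $Hom_A(T,-)$ is exact on short exact sequences that already lie in $Gen(T)$; it says nothing about an extension $0\to M\to N\to T\to 0$ whose middle term $N$ is not yet known to be $T$-generated. Your proposed trace-filtration fix is circular: you correctly observe that $Y/\tau_T(Y)$ is a quotient of $Z$ and hence lies in $Gen(T)$, but then you need $Y/\tau_T(Y)\in T^\circ$, and this is exactly the statement that every map $T\to Y/\tau_T(Y)$ lifts to $Y$, which in turn requires $Ext^1_A(T,\tau_T(Y))=0$ --- the very Ext-projectivity you are trying to establish.

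The paper avoids this circularity by reversing your order. It \emph{first} shows that all injectives lie in $Gen(T)$ (using the embedding $A\hookrightarrow T^n$, which you have), and \emph{then} uses this to prove $Gen(T)\subseteq T^{\perp_1}$ directly: for $M\in Gen(T)$, the injective envelope gives a short exact sequence $0\to M\to E(M)\to E(M)/M\to 0$ lying entirely in $Gen(T)$ (since $E(M)$ and its quotient are $T$-generated); applying the $\ast$-module exactness of $Hom_A(T,-)$ together with $Ext^1_A(T,E(M))=0$ yields $Ext^1_A(T,M)=0$. Only after this does one apply Lemma~\ref{ext-projectivity} and Lemma and Definition~\ref{eq def quasitilting}. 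You already have all the ingredients; simply move your injective-generation paragraph before the quasitilting step and insert the injective-envelope argument to obtain Ext-projectivity.
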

\begin{proof}
Set $\bar{A}=A/Ann(T)$. Since $Ann(T)=Ann(Gen(T))$, it follows that $Gen(T_A)=Gen(T_{\bar{A}})$. Therefore, it is easy to see that $T$ is a finendo $\ast$-module over $A$ if and only if $T$ is a finendo $\ast$-module over $\bar{A}$. So, without loss of generality, it is enough to show that $T$ is a faithful finendo $\ast$-module over $A$ if and only if $T$ is a tilting $A$-module.

The if-part is clear. For the only-if-part, consider a faithful finendo $\ast$-module $T$. 
As in \cite[Theorem 3]{C2}, 
we see that 
all injective $A$-modules are contained in $Gen(T)$, and
$Gen(T)\subseteq T^{\perp_1}$. 
We repeat the arguments for the reader's convenience. Since $T$ is faithful there is a monomorphism $\phi:A\rightarrow T^\alpha$ for some set $\alpha$, where $T^\alpha$ lies in $Gen(T)$ as $T$ is finendo. Now every surjection $A^{(I)}\rightarrow E$ to an injective module $E$ extends to a surjection $(T^\alpha)^{(I)}\rightarrow E$, showing the first claim.
Further, given $M$  in $Gen(T)$, the functor $Hom_A(T,-)$ is exact on the short exact sequence in $Gen(T)$ induced by an injective envelope $M\rightarrow E(M)$ and, since $Ext^1_A(T,E(M))=0$, we get $Ext_A^1(T,M)=0$. 

Now, by Lemma \ref{ext-projectivity}, we have that $Gen(T)$ is a torsion class. Thus, by Lemma and Definition \ref{eq def quasitilting}, $T$ is a quasitilting module and $Gen(T)=\overline{Gen(T)}\cap T^{\perp_1}$. But $\overline{Gen(T)}=Mod(A)$ since every injective module lies in $Gen(T)$, and $Gen(T)=T^{\perp_1}$ as wanted.
\end{proof}

Let us turn to the existence of approximations.

\begin{proposition}\label{bar}
The following are equivalent for an $A$-module $T$.
\begin{enumerate}
\item $T$ is a finendo quasitilting module.
\item $T$  is Ext-projective in $Gen(T)$ and
there is an exact sequence $$\xymatrix{A\ar[r]^{\phi} & T_0\ar[r] & T_1\ar[r] & 0,}$$ with $T_0$ and $T_1$ in $Add(T)$ and $\phi$ a left $Gen(T)$-approximation.
\end{enumerate}
\end{proposition}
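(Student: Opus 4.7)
The plan is to prove the two implications separately, relying on the equivalent characterisations of quasitilting from Lemma and Definition \ref{eq def quasitilting}.

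For $(1)\Rightarrow(2)$, Ext-projectivity of $T$ in $Gen(T)$ is part of the quasitilting assumption. I use the finendo hypothesis to construct the approximation sequence: choose $t_1,\dots,t_n\in T$ generating $T$ as a left $B$-module with $B:=End_A(T)$, and set $\phi\colon A\to T^n$, $a\mapsto(t_1 a,\dots,t_n a)$. For $g\colon A\to M$ with $M\in Gen(T)$, writing $g(1)\in M$ as $\sum_i f_i(s_i)$ with $f_i\in Hom_A(T,M)$ and expanding each $s_i\in T$ as a $B$-linear combination of the $t_j$ produces a factorisation of $g$ through $\phi$; hence $\phi$ is a left $Gen(T)$-approximation. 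Setting $T_0=T^n$ and $T_1=Coker(\phi)$, applying $Hom_A(-,M)$ to the short exact sequence $0\to Im(\phi)\to T_0\to T_1\to 0$ and combining the approximation property with $Ext_A^1(T_0,M)=0$ yields $Ext_A^1(T_1,M)=0$ for all $M\in Gen(T)$. Thus $T_1$ is Ext-projective in $Gen(T)$; since $Pres(T)=Gen(T)$, it has a presentation $0\to K\to T'\to T_1\to 0$ with $T'\in Add(T)$ and $K\in Gen(T)$, which splits by Ext-projectivity of $T_1$, giving $T_1\in Add(T)$.

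For $(2)\Rightarrow(1)$, I first verify that $T$ is finendo. Writing $T_0\oplus S\cong T^{(I)}$, the element $\phi(1)\in T_0\subseteq T^{(I)}$ has finite support, so $\phi$ factors through some $T^n\hookrightarrow T^{(I)}$, and composing with the projection $T^{(I)}\to T_0$ the factored map $\phi'\colon A\to T^n$ inherits the left $Gen(T)$-approximation property. Evaluating the factorisation through $\phi'$ of each map $A\to T$ then expresses every $t\in T$ as a $B$-linear combination of the coordinates of $\phi'(1)=(t_1,\dots,t_n)$, proving $T=Bt_1+\cdots+Bt_n$.

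The main obstacle is to show $T$ is quasitilting, which I will establish via characterisation $(3)$ of Lemma and Definition \ref{eq def quasitilting}: $Gen(T)=\overline{Gen(T)}\cap T^{\perp_1}$. The inclusion $\subseteq$ is immediate from Ext-projectivity. For the reverse, the key preparation is the identity $Ker(\phi)=Ann(T)$: the inclusion $Ann(T)\subseteq Ker(\phi)$ holds since $\phi(a)=\phi(1)\cdot a\in T_0\cdot Ann(T)=0$ for $a\in Ann(T)$, using $T_0\in Add(T)$; conversely, if $a\in Ker(\phi)$, then for any $M\in Gen(T)$ and $g\colon A\to M$ factoring as $g=h\phi$, one has $g(a)=h\phi(a)=0$, so $m\cdot a=0$ for every $m\in M$, forcing $a\in Ann(T)$. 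Consequently the given sequence descends to an honest short exact sequence $0\to\bar A\to T_0\to T_1\to 0$ over $\bar A:=A/Ann(T)$. Now take $N\in\overline{Gen(T)}\cap T^{\perp_1}$: since $N$ is a submodule of something in $Gen(T)$, it is annihilated by $Ann(T)$, and hence $Hom_A(\bar A,N)\cong N$. Applying $Hom_A(-,N)$ to the short exact sequence and using $Ext_A^1(T_i,N)=0$ for $i=0,1$ (since $T_i\in Add(T)$ and $N\in T^{\perp_1}$), the long exact sequence collapses to a surjection $Hom_A(T_0,N)\twoheadrightarrow N$. The image of any map $T_0\to N$ lies in $\tau_T(N)$, so this forces $\tau_T(N)=N$, i.e.\ $N\in Gen(T)$, completing the argument.
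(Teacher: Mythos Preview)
Your proof is correct, but it follows a different route from the paper's. The paper's argument for both implications passes through Lemma~\ref{finendo star}, which characterises finendo $\ast$-modules as tilting modules over $\bar A=A/Ann(T)$: for $(1)\Rightarrow(2)$ it pulls back the tilting sequence of $T$ over $\bar A$ along the projection $A\to\bar A$; for $(2)\Rightarrow(1)$ it shows directly that $Gen(T)=Ker(Ext^1_{\bar A}(T,-))$, hence $T$ is $\bar A$-tilting and thus finendo quasitilting. By contrast, you work entirely with the definitions of finendo and with characterisations (2) and (3) of quasitilting from Lemma and Definition~\ref{eq def quasitilting}: in $(1)\Rightarrow(2)$ you build the approximation $\phi\colon A\to T^n$ by hand from a finite $B$-generating set, and then deduce $T_1\in Add(T)$ from Ext-projectivity plus $Pres(T)=Gen(T)$ (this is essentially the argument of Lemma~\ref{recovering}); in $(2)\Rightarrow(1)$ you extract finendo from the finite support of $\phi(1)$ and then verify $\overline{Gen(T)}\cap T^{\perp_1}\subseteq Gen(T)$ directly. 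Both proofs hinge on the same key identity $Ker(\phi)=Ann(T)$. Your approach is more elementary and self-contained, while the paper's highlights the conceptual equivalence between finendo quasitilting over $A$ and tilting over $\bar A$.

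One small point worth making explicit in your $(2)\Rightarrow(1)$: the step ``$Ext^1_A(T_i,N)=0$ since $T_i\in Add(T)$ and $N\in T^{\perp_1}$'' uses that $Ext^1_A(T^{(J)},N)\cong\prod_J Ext^1_A(T,N)$, which holds because products of modules are exact; this is standard but not entirely trivial.
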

\begin{proof}

(1)$\Rightarrow$(2):   $T$ is by definition Ext-projective in $Gen(T)$ and, moreover, $T$ is a tilting $\bar{A}$-module by Lemma \ref{finendo star}. Then there is a short 
exact sequence $$\xymatrix{0\ar[r]&\bar{A}\ar[r]^{\bar{\phi}} & T_0\ar[r] & T_1\ar[r]&0}$$  with $T_0$ and $T_1$ in $Add(T)$ and  $\bar{\phi}$ a left $Gen(T_{\bar{A}})$-approximation in $Mod(\bar{A})$. The composition with the canonical projection $\pi:A\rightarrow \bar{A}$ then yields the desired left $Gen(T)$-approximation $\phi=\bar{\phi}\pi:A\to T_0$  in $Mod(A)$.

(2)$\Rightarrow$(1): We  have to show that $T$ is an $\bar{A}$-tilting module.
First, we see that $Gen(T)$  is contained  $Ker(Ext^1_{\bar{A}}(T,-))$. Indeed, every short exact sequence 
$0\rightarrow M\rightarrow N\rightarrow T\rightarrow 0$ in $Mod(\bar{A})$,
with $M$ in $Gen(T)$, splits in $Mod(A)$ 
as $T$ is Ext-projective, and thus it splits in $Mod(\bar{A})$. Now, we show that $Ann(T)= Ker(\phi)$. In fact,  $Ann(T)\subseteq Ker(\phi)$ as $T_0$ lies in $Gen(T)$. For the reverse inclusion
note that $Ann(T)$ is the intersection of the kernels of all maps in $Hom_A(A,T)$. Since every map $f:A\rightarrow T$ factors through $\phi$, we infer $Ker(\phi)\subseteq Ker(f)$.

Therefore, $\phi$ factors as  $\phi=\bar{\phi}\pi$  through   the canonical projection $\pi:A\rightarrow \bar{A}$. From the short exact sequence $$\xymatrix{0\ar[r]&\bar{A}\ar[r]^{\bar{\phi}} & T_0\ar[r] & T_1\ar[r]&0}$$ we deduce that every module 
$X$ in $Ker(Ext^1_{\bar{A}}(T,-))$, being generated by $\bar{A}$ and satisfying  $Ext^1_{\bar{A}}(T_1,X)=0$, is also generated by $T_0$, and thus by $T$.
Hence  $Gen(T)= Ker(Ext^1_{\bar{A}}(T,-))$, and the proof is complete.
\end{proof}

We can now classify the torsion classes that yield left approximations with Ext-projective cokernel.

\begin{theorem}\label{ATTanalog}
The following are equivalent for a torsion class $\Tcal$ in $Mod(A)$.
\begin{enumerate}
\item For every $A$-module $M$ there is a sequence
$$\xymatrix{M\ar[r]^{\phi} & B\ar[r] & C\ar[r]&0}$$
such that $\phi$ is a left $\Tcal$-approximation and $C$ is Ext-projective in $\Tcal$.
\item There is a finendo quasitilting $A$-module $T$ such that $\Tcal=Gen(T)$.
\end{enumerate}
\end{theorem}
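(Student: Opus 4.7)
The plan is to prove the two implications separately, in each case using Proposition \ref{bar} to identify a suitable module $T$ as finendo quasitilting via its approximation sequence for $A$.

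For (2)$\Rightarrow$(1), I start from Proposition \ref{bar} applied to the finendo quasitilting module $T$, producing a sequence $A \xrightarrow{\phi} T_0 \to T_1 \to 0$ with $T_0, T_1 \in Add(T)$ and $\phi$ a left $Gen(T)$-approximation. For an arbitrary module $M$, I choose a surjection $p: A^{(I)} \twoheadrightarrow M$ and form the pushout $B$ of $\phi^{(I)}: A^{(I)} \to T_0^{(I)}$ along $p$. Since $\phi^{(I)}$ remains a left $Gen(T)$-approximation (maps out of $A^{(I)}$ factor componentwise through $\phi$), the universal property of the pushout provides a left $\Tcal$-approximation $\psi: M \to B$. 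Cokernels are preserved by pushouts, so the cokernel of $\psi$ is $T_1^{(I)}$, which is Ext-projective in $\Tcal$ as a coproduct of copies of $T_1$; and the induced map $T_0^{(I)} \to B$ is surjective because $p$ is, so $B \in Gen(T) = \Tcal$.

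For the converse, I apply the hypothesis to $A$ itself to obtain $\phi: A \to T_0$ with Ext-projective cokernel $T_1$, and set $T := T_0 \oplus T_1$. Both summands lie in $\Tcal$ (for $T_1$, by closure under quotients), so $Gen(T) \subseteq \Tcal$. Conversely, every $X \in \Tcal$ is a quotient of some $A^{(J)}$, and factoring each component through $\phi$ produces a surjection $T_0^{(J)} \twoheadrightarrow X$; hence $\Tcal = Gen(T)$. Since $Gen(T)$ provides left approximations by hypothesis, $T$ is finendo by \cite[Proposition 1.2]{ATT}, and Proposition \ref{bar} will identify $T$ as finendo quasitilting as soon as I verify that $T$ is Ext-projective in $Gen(T)$.

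The main obstacle is this last Ext-projectivity. Since $T_1$ is Ext-projective by hypothesis, it remains to prove $Ext_A^1(T_0, X) = 0$ for all $X \in \Tcal$. My strategy is to use the short exact sequence $0 \to A/K \to T_0 \to T_1 \to 0$ with $K = \Ker \phi$: the long exact sequence for $Hom_A(-, X)$, combined with $Ext_A^1(T_1, X) = 0$, yields an injection $Ext_A^1(T_0, X) \hookrightarrow Ext_A^1(A/K, X) \cong Hom_A(K, X)/Hom_A(A, X)|_K$. Given an extension $\xi: 0 \to X \to E \to T_0 \to 0$, the module $E$ lies in $\Tcal$ (closed under extensions), so by projectivity of $A$ there is a lift $\tilde\phi: A \to E$ of $\phi$, and by the approximation property applied to this lift there is $g: T_0 \to E$ with $\tilde\phi = g\phi$. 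Restricting to $K$ yields $\tilde\phi|_K = g \circ \phi|_K = 0$; under the isomorphism above, this says precisely that the image of $\xi$ in $Ext_A^1(A/K, X)$ vanishes, and injectivity then forces $\xi = 0$.
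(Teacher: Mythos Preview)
Your proof is correct. The direction (2)$\Rightarrow$(1) follows the paper exactly: the paper simply cites \cite[Proposition 1.2]{ATT} for the construction of approximation sequences from the one for $A$, and you spell out that pushout construction explicitly. For (1)$\Rightarrow$(2), the overall structure is also the same --- take the approximation sequence for $A$, set $T=T_0\oplus T_1$, verify $\Tcal=Gen(T)$, and reduce to showing that $T_0$ is Ext-projective in $\Tcal$.

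The genuine difference lies in that last step. The paper passes to $\bar A=A/Ann(T)$: since the approximation $\phi$ has $\Ker\phi=Ann(T)$, one obtains a short exact sequence $0\to\bar A\to T_0\to T_1\to 0$ in $Mod(\bar A)$, and the projectivity of $\bar A_{\bar A}$ together with the Ext-projectivity of $T_1$ forces $T_0$ to be Ext-projective over $\bar A$; the result over $A$ follows because any extension of $T_0$ by an object of $Gen(T)$ already lives in $Mod(\bar A)$. Your argument stays entirely over $A$: from $0\to A/K\to T_0\to T_1\to 0$ you get an injection $Ext_A^1(T_0,X)\hookrightarrow Ext_A^1(A/K,X)$, identify the image of a given extension $\xi$ with the class of $\tilde\phi|_K$ in $Hom_A(K,X)/Hom_A(A,X)|_K$, and then use the approximation property of $\phi$ to factor the lift $\tilde\phi$ through $\phi$, forcing $\tilde\phi|_K=0$. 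This is a clean, more elementary alternative that avoids the change of rings; the paper's route, on the other hand, ties the argument back to Lemma~\ref{finendo star} and the tilting structure over $\bar A$, which is conceptually suggestive but not strictly needed here.
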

\begin{proof}
(1)$\Rightarrow$(2): Choose $M=A$ with an approximation sequence $$\xymatrix{A\ar[r]^{\phi} & B\ar[r] & C\ar[r]&0}$$ and set $T=B\oplus C$. Clearly, we have $Gen(T)\subseteq \Tcal$. Conversely, if $X$ is a module in $\Tcal$, any surjection $f:A^{(I)}\ra X$ factors through the $\Tcal$-approximation $\phi^{(I)}$ via a surjection $B^{(I)}\ra X$, showing that $X$ lies in $Gen(T)$. Thus, we have that $Gen(T)=\Tcal$. By Proposition \ref{bar}, it remains to show that $T$ is Ext-projective in $Gen(T)$. In fact, by assumption, we have to verify this only for $B$.
As in the proof of Proposition \ref{bar} we obtain a short exact sequence $$\xymatrix{0\ar[r]&\bar{A}\ar[r]^{\bar{\phi}} & B\ar[r] & C\ar[r]&0}$$
 over $\bar{A}=A/Ann(T)$, and we see that  $Gen(T)$  is contained  $Ker(Ext^1_{\bar{A}}(C,-))$.
Using the projectivity of $\bar{A}_{\bar{A}}$, we infer that $Gen(T)$  is  also contained  $Ker(Ext^1_{\bar{A}}(B,-))$. Consider now a 
short exact sequence 
$0\rightarrow M\rightarrow N\rightarrow B\rightarrow 0$ in $Mod(A)$
with $M$ in $Gen(T)$. Since $Gen(T)$ is a torsion class, also $N$ belongs to $Gen(T)$ and the sequence actually lies in  $Mod(\bar{A})$. Then it splits in  $Mod(\bar{A})$, and thus it also splits in $Mod({A})$. So $B$ is Ext-projective in $Gen(T)$.

(2)$\Rightarrow$(1): As in \cite[Proposition 1.2]{ATT}, we use the approximation sequence for $A$ in Proposition \ref{bar} to construct approximation sequences for all $A$-modules $M$, where the cokernels turn out to lie in  $Add(T)$ and thus  are Ext-projective modules in $\Tcal$.\end{proof}

The following lemma tells how to recover a quasitilting module from its associated torsion class.

\begin{lemma}\label{recovering}
If $T$ is a quasitilting module, then $Add(T)$ is the class of  Ext-projective modules in $Gen(T)$.
\end{lemma}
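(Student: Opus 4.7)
My plan is to prove the two inclusions separately, using the characterisations of quasitilting modules from Lemma and Definition \ref{eq def quasitilting}, in particular the equivalence with condition (2): $Pres(T)=Gen(T)$ and $T$ is Ext-projective in $Gen(T)$.

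For the inclusion $Add(T)\subseteq \{M\in Gen(T):Ext^1_A(M,Gen(T))=0\}$, I would argue as follows. Since $T$ is Ext-projective in $Gen(T)$, for any $X\in Gen(T)$ and any set $I$ we have
$$Ext^1_A(T^{(I)},X)\cong \prod_I Ext^1_A(T,X)=0,$$
so every coproduct of copies of $T$ is Ext-projective in $Gen(T)$. Taking direct summands clearly preserves both membership in $Gen(T)$ and the vanishing of $Ext^1_A(-,Gen(T))$, so every module in $Add(T)$ is Ext-projective in $Gen(T)$.

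The reverse inclusion is where the real content lies. Suppose $M$ is Ext-projective in $Gen(T)$. Since $Pres(T)=Gen(T)$, the module $M$ admits an $Add(T)$-presentation $T_1\stackrel{f}{\longrightarrow}T_0\longrightarrow M\longrightarrow 0$ with $T_0,T_1\in Add(T)$. Setting $K:=\Ker(T_0\to M)$, I note that $K=\Img(f)$ is an epimorphic image of $T_1\in Add(T)$, hence $K\in Gen(T)$. This gives a short exact sequence
$$0\longrightarrow K\longrightarrow T_0\longrightarrow M\longrightarrow 0$$
with $K\in Gen(T)$. The Ext-projectivity assumption on $M$ then yields $Ext^1_A(M,K)=0$, so this sequence splits, and $M$ becomes a direct summand of $T_0\in Add(T)$, giving $M\in Add(T)$.

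I do not expect any serious obstacle here: the only subtle point is observing that the kernel $K$ appearing in the $Add(T)$-presentation automatically lies in $Gen(T)$, which is exactly why working with $Pres(T)$ rather than merely $Gen(T)$ is essential. Once that is in place, Ext-projectivity does all the work through the splitting argument.
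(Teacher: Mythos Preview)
Your proof is correct and follows essentially the same route as the paper's. Both arguments use condition (2) of Lemma and Definition \ref{eq def quasitilting}: for the forward inclusion the paper simply asserts that Ext-projectivity passes to $Add(T)$, while you spell out the isomorphism $Ext^1_A(T^{(I)},X)\cong\prod_I Ext^1_A(T,X)$; for the reverse inclusion both pick an $Add(T)$-presentation of $M$ (coming from $Gen(T)=Pres(T)$), observe that the kernel lies in $Gen(T)$, and split the resulting short exact sequence via the Ext-projectivity of $M$.
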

\begin{proof}
If $T$ is Ext-projective in $Gen(T)$, then so is every module in $Add(T)$. Conversely, given an Ext-projective module $M$ in $Gen(T)=Pres(T)$, there is a surjection $f:T^{\prime}\rightarrow M$, for some $T^\prime$ in $Add(T)$, with $Ker(f)$ in $Gen(T)$. The Ext-projectivity of $M$ implies that the short exact sequence induced by $f$ splits and, thus $M$ lies in $Add(T)$.
\end{proof}

Consequently, two quasitilting modules have the same additive closure if and only if they generate the same torsion class.
We will thus say that two quasitilting  modules $T_1$ and $T_2$ are \textbf{equivalent} if $Add(T_1)=Add(T_2)$. Theorem \ref{ATTanalog} can now be rephrased as follows.

\begin{corollary}\label{air}
 There is a bijection between equivalence classes of finendo quasitilting $A$-modules and torsion classes $\Tcal$ in $Mod(A)$ such that every $A$-module has a left $\Tcal$-approximation with Ext-projective cokernel. \end{corollary}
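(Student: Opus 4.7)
The plan is to read Corollary \ref{air} as a direct packaging of Theorem \ref{ATTanalog} together with Lemma \ref{recovering}. Define a map
\[
\Phi : \{\text{equivalence classes of finendo quasitilting $A$-modules}\} \longrightarrow \{\text{torsion classes as in the statement}\},
\qquad [T] \longmapsto Gen(T),
\]
and show that it is well defined, surjective, and injective.

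First, $\Phi$ is well defined: if $T_1$ and $T_2$ are equivalent finendo quasitilting modules, then $Add(T_1) = Add(T_2)$ yields $Gen(T_1) = Gen(T_2)$, and Theorem \ref{ATTanalog} $(2)\Rightarrow(1)$ guarantees that the common torsion class $Gen(T_i)$ admits, for every $A$-module $M$, a left approximation with Ext-projective cokernel. Surjectivity is precisely Theorem \ref{ATTanalog} $(1)\Rightarrow(2)$: any torsion class $\Tcal$ of the prescribed form is of the form $Gen(T)$ for the finendo quasitilting module $T = B\oplus C$ built from the approximation sequence of $A$.

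For injectivity, suppose $T_1$ and $T_2$ are finendo quasitilting modules with $Gen(T_1) = Gen(T_2)$. By Lemma \ref{recovering}, $Add(T_i)$ coincides with the class of Ext-projective modules inside $Gen(T_i)$, so the right-hand sides are equal and hence $Add(T_1)=Add(T_2)$, i.e.\ $T_1$ and $T_2$ are equivalent.

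There is no real obstacle here: the content is entirely in the previous two results, and the corollary is just the bijective reformulation. The only point that one must be careful about is checking that the construction $\Tcal \mapsto B\oplus C$ used in Theorem \ref{ATTanalog} produces a representative whose additive closure, and hence equivalence class, is determined by $\Tcal$ alone; but this is automatic from Lemma \ref{recovering}, since any other finendo quasitilting module generating the same torsion class must have the same additive closure as $B\oplus C$.
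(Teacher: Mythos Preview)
Your proof is correct and follows exactly the paper's approach: the paper simply states that Corollary~\ref{air} is a rephrasing of Theorem~\ref{ATTanalog}, having just observed (immediately after Lemma~\ref{recovering}) that two quasitilting modules have the same additive closure if and only if they generate the same torsion class. Your write-up makes the bijection and the roles of Theorem~\ref{ATTanalog} and Lemma~\ref{recovering} explicit, but the content is identical.
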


\subsection{Silting modules}\label{subsection silting}
In this subsection we study (partial) silting modules, the main objects under consideration in this work.
These modules will be defined in a way suggested by Theorem \ref{tau-free description}. For a morphism $\sigma$ in $Proj(A)$, we consider the class of $A$-modules
$$\mathscr{D}_\sigma:=\{X\in Mod(A)|Hom_A(\sigma,X)\ \text{is surjective}\}.$$ 
 We collect some useful properties of $\mathscr{D}_\sigma$.

\begin{lemma}\label{ext-orthogonality1}
Let $\sigma$ be a map in $Proj(A)$ with cokernel $T$.
\begin{enumerate}
\item $\mathscr{D}_\sigma$ is closed under epimorphic images,  extensions, and direct products.
\item The class $\mathscr{D}_\sigma$ is contained in $T^{\perp_1}$.
\item An $A$-module $X$ belongs to $\mathscr{D}_\sigma$ if and only if for some (respectively, all) projective presentation(s) $\omega$ of $X$ the condition $Hom_{D(A)}(\sigma,\omega[1])=0$ is satisfied.
\end{enumerate}
\end{lemma}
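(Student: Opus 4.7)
I would write $\sigma\colon P_{-1}\to P_0$ and verify each closure property by diagram chasing, using heavily that $P_{-1}$ and $P_0$ are projective. Closure under direct products is immediate from the universal property: a map $P_{-1}\to\prod X_i$ factors through $\sigma$ iff each component does. For epimorphic images, given a surjection $g\colon X\twoheadrightarrow Z$ with $X\in\mathscr{D}_\sigma$ and a test map $f\colon P_{-1}\to Z$, I would lift $f$ to some $\tilde f\colon P_{-1}\to X$ (projectivity of $P_{-1}$), factor $\tilde f$ as $h\sigma$, and take $gh$ to obtain the required factorisation. For extensions $0\to X'\to X\to X''\to 0$ with both ends in $\mathscr{D}_\sigma$ and $f\colon P_{-1}\to X$, I would push $f$ forward to a map $P_{-1}\to X''$, factor it through $\sigma$ via some $h''\colon P_0\to X''$, lift $h''$ to $\tilde{h}''\colon P_0\to X$ using projectivity of $P_0$, and then observe that $f-\tilde{h}''\sigma$ lands inside $X'$, so the hypothesis on $X'$ gives another factorisation that can be added on to complete the argument.

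\textbf{Plan for (2).} Let $Y=\Img(\sigma)$, so that $0\to Y\to P_0\to T\to 0$ is exact. Applying $\Hom_A(-,X)$ and using $\Ext^1_A(P_0,X)=0$, the statement $\Ext^1_A(T,X)=0$ reduces to showing that $\Hom_A(P_0,X)\to\Hom_A(Y,X)$ is surjective. Given $\varphi\colon Y\to X$, I would compose with the canonical surjection $\bar\sigma\colon P_{-1}\twoheadrightarrow Y$ to obtain $\varphi\bar\sigma\colon P_{-1}\to X$, which by hypothesis factors through $\sigma$ as $h\sigma$ for some $h\colon P_0\to X$. Since $\bar\sigma$ is epic, the restriction $h|_Y$ is then forced to equal $\varphi$, yielding the desired extension.

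\textbf{Plan for (3).} The key ingredient is that $\sigma$, as a bounded above complex of projectives, is K-projective, whence $\Hom_{D(A)}(\sigma,-)=\Hom_{K(A)}(\sigma,-)$. Fix a projective presentation $\omega\colon Q_{-1}\to Q_0$ of $X$, with canonical projection $\pi\colon Q_0\to X$. Unpacking degree by degree, a chain map $\sigma\to\omega[1]$ reduces to a single morphism $\alpha\colon P_{-1}\to Q_0$ (the chain-map conditions become trivial), and such a chain map is null-homotopic precisely when $\alpha=h^0\sigma+\omega h^{-1}$ for some $h^0\colon P_0\to Q_0$ and $h^{-1}\colon P_{-1}\to Q_{-1}$. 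Composing with $\pi$ kills the term $\omega h^{-1}$, so the homotopy condition becomes: $\pi\alpha\colon P_{-1}\to X$ factors through $\sigma$. Since $P_{-1}$ is projective and $\pi$ is surjective, every map $P_{-1}\to X$ arises as $\pi\alpha$ for some $\alpha$, and conversely every $\alpha$ produces such a map. Hence $\Hom_{D(A)}(\sigma,\omega[1])=0$ is equivalent to every map $P_{-1}\to X$ factoring through $\sigma$, i.e.\ to $X\in\mathscr{D}_\sigma$. Since this latter condition depends only on $X$, the "some" versus "all" equivalence is automatic.

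The conceptual content in all three parts is modest; the only real bookkeeping hurdle is keeping the shifts and null-homotopy identities in (3) straight, which is why I would spell out the degrees explicitly as above.
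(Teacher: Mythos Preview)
Your proposal is correct and matches the paper's approach. For (1) the paper simply leaves the verification to the reader, and your diagram chases are exactly what is intended; for (2) your argument is identical to the paper's (with $Y$, $\bar\sigma$ playing the roles of $\Img(\sigma)$, $\pi$); for (3) the paper just cites it as an easy observation, and your explicit unpacking of chain maps and homotopies is the standard way to carry this out. One small point worth making explicit in (3): the reverse implication in ``the homotopy condition becomes $\pi\alpha$ factors through $\sigma$'' requires lifting a factorisation $\pi\alpha=k\sigma$ back to a homotopy, which uses the projectivity of $P_0$ (to lift $k$ to $h^0\colon P_0\to Q_0$) and then of $P_{-1}$ (to lift $\alpha-h^0\sigma$, which lands in $\Img(\omega)$, to $h^{-1}$); you clearly have this in mind, but it is the one step that is not literally automatic.
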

\begin{proof}
The proof of statement (1) is left to the reader.

(2) Set $\sigma: P_{-1}\ra P_0$ and write $\sigma=i\pi$ with  $\pi:P_{-1}\rightarrow Im(\sigma)$ and $i:Im(\sigma)\rightarrow P_0$. By applying the functor $Hom_A(-,N)$, with $N$ in $\mathscr{D}_\sigma$, to the short exact sequence induced by the monomorphism $i:Im(\sigma)\rightarrow P_0$
we get the exact sequence
$$\xymatrix{ Hom_A(P_0,N)\ar[r]^{\!\!\!\!\!i_*} & Hom_A(Im({\sigma}),N)\ar[r] & Ext_A^1(T,N)\ar[r] & 0.}$$
We show that $i_*$ is surjective. Consider a test map $f:Im(\sigma)\ra N$.
Since $N$ belongs to $\mathscr{D}_\sigma$, there is a map $g:P_0\ra N$ such that $f\pi=g i\pi$. Consequently, since $\pi$ is an epimorphism, we get $f=g i$, as wanted.

(3) This is an easy observation, based on \cite[Lemma 3.4]{AIR}. 
\end{proof}

\begin{definition}\label{def p silting}
We say that an $A$-module $T$ is
\begin{itemize}
\item  \textbf{partial silting} if there is a projective presentation $\sigma$ of $T$ such that 
\begin{enumerate}
\item[(S1)] $\mathscr{D}_\sigma$ is a torsion class.
\item[(S2)] $T$ lies in $\mathscr{D}_\sigma$.
\end{enumerate}
\item \textbf{silting} if there is a projective presentation $\sigma$ of $T$ such that $Gen(T)=\mathscr{D}_\sigma$.
\end{itemize}
We will then say that $T$ is (partial) silting \textbf{with respect to} $\sigma$.
\end{definition}

\begin{remark}\label{a few observations}
(1) If $T$ is partial silting, then $Gen(T)\subseteq \mathscr{D}_\sigma \subseteq T^{\perp_1}$  by Lemma \ref{ext-orthogonality1}(2), and   $(Gen(T),T^\circ)$ is a torsion pair by Lemma \ref{ext-projectivity}. The same arguments show that every silting module is partial silting.

\smallskip

(2) Since $\mathscr{D}_\sigma$ is always closed for epimorphic images and extensions, condition (S1) is equivalent to require that $\mathscr{D}_\sigma$ is closed for coproducts. This is always true when $\sigma$ is a map in $proj(A)$ and, thus, a compact object in $D(A)$. So, in this case, $T$ is partial silting if and only if  $Hom_{D(A)}(\sigma,\sigma[1])=0$. The latter property hints on the choice of  the name \textit{silting} for our modules, which will indeed be justified by the relation with  (2-term) silting complexes (to be explored in section \ref{Silting complexes}).

Notice, however, that in general $\mathscr{D}_\sigma$ can contain  $T$, and even all direct sums of copies of $T$, without being a torsion class. For example,  the generic module $G$ over the Kronecker algebra (the path algebra of the quiver $\xy\xymatrixcolsep{2pc}\xymatrix{ \bullet \ar@<0.5ex>[r]
  \ar@<-0.5ex>[r] & \bullet } \endxy$) satisfies conditions (T1) and (T2) in Definition \ref{tilt}. Taking  a monomorphic presentation  $\sigma$ of $G$, we obtain a class  $\mathscr{D}_\sigma=G^{\perp_1}$  containing  $Gen(G)$. But $\mathscr{D}_\sigma$ is not a torsion class (and $G$ is not partial tilting according to Definition \ref{ptilt}), because it  is not closed under direct sums. Indeed, every adic module $S_{-\infty}$ belongs to $G^{\perp_1}$, while $S_{-\infty}\,^{(\omega)}$ does not. This follows from   \cite[Proposition 1 and Remark on p.265]{O} 
 stating that a torsion-free regular module belongs to $G^{\perp_1}$ if and only if it is pure-injective. For details on infinite dimensional modules over hereditary algebras we refer to  \cite{R,RR}.
 
 \smallskip

(3) Note that the definitions in \ref{def p silting} depend on the choice of $\sigma$: not all projective presentations of a silting or partial silting module will fulfill conditions (S1) and (S2). Further, $T$ can be partial silting with respect to different projective presentations giving rise to different associated torsion classes. However, there is a unique torsion class, $Gen(T)$, which can turn a module $T$ into a silting module.

\end{remark}

There is an evident parallel between (S1) and (S2) and the axioms (PT1) and (PT2) defining partial tilting modules and, thus, also with (T1) and (T2) in the definition of a tilting module. We will later obtain an analogue of (T3) in Theorem \ref{eq def silting module}. Moreover, the definition of silting clearly resembles the condition $Gen(T)=T^{\perp_1}$  defining tilting. 
Let us make this comparison more precise. Recall that an $A$-module $T$ is said to be \textbf{sincere} if $Hom_A(P,T)\not= 0$ for all non-zero projective $A$-modules $P$.

\begin{proposition}\label{tilting}
\begin{enumerate}
\item An $A$-module $T$ is (partial) tilting if and only if $T$ is a (partial) silting module with respect to a monomorphic projective presentation.
\item A module $T$ of projective dimension at most one is tilting if and only if it is a  sincere silting module.
\end{enumerate}
\end{proposition}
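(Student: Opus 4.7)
The plan is to exploit a single identification that reduces part (1) to bookkeeping. If $\sigma\colon 0\to P_{-1}\to P_0\to T\to 0$ is a \emph{monomorphic} projective presentation, then applying $\Hom_A(-,X)$ gives the exact sequence
\[
0\to\Hom_A(T,X)\to\Hom_A(P_0,X)\to\Hom_A(P_{-1},X)\to\Ext_A^1(T,X)\to 0,
\]
so that $\mathscr{D}_\sigma=T^{\perp_1}$. With this in hand, part (1) becomes a direct translation of axioms: $Gen(T)=\mathscr{D}_\sigma$ is exactly $Gen(T)=T^{\perp_1}$, and (S1)+(S2) with respect to $\sigma$ are exactly (PT1)+(PT2). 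The only additional input is the existence of such a $\sigma$, i.e.\ that $T$ has projective dimension at most one; for tilting this is (T1), and for partial tilting it is the remark after Definition \ref{ptilt} that (PT1) implies (T1).

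For part (2), the forward direction follows from (1) plus sincerity, which I would extract from axiom (T3). Any nonzero projective $P$ embeds in some $A^{(I)}$, and the embedding $A\hookrightarrow T_0\in Add(T)$ from (T3) extends coordinatewise to yield a nonzero map $P\to T_0^{(I)}\subseteq T^{(J)}$ for some $J$. But if $\Hom_A(P,T)=0$, then every coordinate projection of any map $P\to T^{(J)}$ is zero, forcing the map itself to vanish; this contradiction shows $\Hom_A(P,T)\neq 0$.

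For the converse, let $T$ be sincere and silting with respect to some $\sigma\colon P_{-1}\to P_0$, with $\mathrm{pd}(T)\le 1$. Schanuel's Lemma applied to $0\to\Img(\sigma)\to P_0\to T\to 0$ against any short projective resolution of $T$ shows $\Img(\sigma)$ is projective, so the surjection $P_{-1}\twoheadrightarrow\Img(\sigma)$ splits. Setting $K=\Ker(\sigma)$, we obtain a direct-sum decomposition $\sigma\cong(K\to 0)\oplus\sigma'$ in the category of morphisms of $Proj(A)$, where $\sigma'\colon\Img(\sigma)\hookrightarrow P_0$ is a monomorphic projective presentation of $T$. Additivity of $\Hom_A(-,X)$ then gives
\[
\mathscr{D}_\sigma=\mathscr{D}_{\sigma'}\cap\mathscr{D}_{(K\to 0)}=T^{\perp_1}\cap K^\circ.
\]
Since $T\in Gen(T)=\mathscr{D}_\sigma\subseteq K^\circ$, we conclude $\Hom_A(K,T)=0$. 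As $K$ is projective, sincerity forces $K=0$, so $\sigma=\sigma'$ is monomorphic and part (1) identifies $T$ as tilting.

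The one nontrivial step is the splitting-off of $(K\to 0)$ from $\sigma$: one must recognise that $\mathrm{pd}(T)\le 1$ projectivises $\Ker(\sigma)$, producing a decomposition in which sincerity then kills the stray summand. The rest of the argument is pure matching of defining conditions on the two sides of each equivalence.
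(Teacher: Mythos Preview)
Your proof is correct and follows essentially the same route as the paper's. The paper's only shortcuts are to invoke faithfulness of tilting modules directly for sincerity in the forward direction of (2), and to argue $\Hom_A(\Ker\sigma,T)=0$ by noting that every map $P_{-1}\to T$ factors through $\sigma$ (rather than writing out the decomposition $\mathscr{D}_\sigma=T^{\perp_1}\cap K^\circ$); otherwise the arguments coincide.
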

\begin{proof}
(1) If $T$ is a partial tilting module, there is a monomorphic projective presentation $\sigma$ of $T$,
and $\mathscr{D}_\sigma=T^{\perp_1}$. 
Since $Ext_A^1(T,T)=0$, $T$ lies in $\mathscr{D}_\sigma$, so that $T$ is partial silting with respect to  $\sigma$. If, furthermore, $T$ is tilting, then $Gen(T)=T^{\perp_1}=\mathscr{D}_\sigma$, thus showing that $T$ is silting. The converse implication is shown similarly.

(2) If $T$ is tilting, then it is a faithful module and, therefore, sincere. Conversely, assume that $T$ is a sincere silting module with respect to a projective presentation $\sigma:P_{-1}\ra P_0$.  Since $T$ has projective dimension at most one, $Im(\sigma)$ is a projective $A$-module and  $Ker(\sigma)$ is a direct summand of  $P_{-1}$. But then, as  $T$ lies in $\mathscr{D}_\sigma$ and every morphism $P_{-1}\ra T$ factors through $\sigma$, we have $Hom_A(Ker(\sigma),T)=0$. Since $Ker(\sigma)$ is projective and $T$ is sincere, it follows that $Ker(\sigma)=0$ and $T$ is tilting by (1).
\end{proof}

Notice that even if a module has projective dimension one, it can happen that monomorphic presentations are not the  ones to consider for verifying the silting condition. So not all silting modules of projective dimension 1 are tilting, as illustrated 
in Subsection \ref{examples subsection}. The next proposition relates silting modules to quasitilting modules.

\begin{proposition}\label{quasi}
\begin{enumerate}
\item All silting modules are finendo quasitilting.
\item A module is tilting if and only if it is  faithful silting (and if and only if it is  faithful finendo quasitilting).
\end{enumerate}
\end{proposition}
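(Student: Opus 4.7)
My plan for (1) is to verify that a silting module $T$ with projective presentation $\sigma:P_{-1}\to P_0$ satisfies characterisation (3) of Lemma and Definition \ref{eq def quasitilting}, and to derive the finendo condition separately. The inclusion $Gen(T)\subseteq \overline{Gen(T)}\cap T^{\perp_1}$ is immediate: by Lemma \ref{ext-orthogonality1}(2) we have $Gen(T)=\mathscr{D}_\sigma\subseteq T^{\perp_1}$. The finendo property will come for free from Lemma \ref{ext-orthogonality1}(1), since $Gen(T)=\mathscr{D}_\sigma$ is closed under direct products and this, by the criterion recalled just before Lemma \ref{finendo star}, is equivalent to $T$ being finendo. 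The heart of the proof will be the reverse inclusion $\overline{Gen(T)}\cap T^{\perp_1}\subseteq Gen(T)$.

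For this reverse inclusion, I will take $N\in\overline{Gen(T)}\cap T^{\perp_1}$ with a chosen monomorphism $i:N\hookrightarrow M$ where $M\in Gen(T)=\mathscr{D}_\sigma$, and show $N\in\mathscr{D}_\sigma$ by lifting an arbitrary $f:P_{-1}\to N$ to a map $P_0\to N$. First I will lift $i\circ f$ through $\sigma$ to some $g:P_0\to M$, using $M\in\mathscr{D}_\sigma$. Composition with the canonical projection $p:M\to M/N$ kills $\Img\sigma$ (since $pi=0$), so $pg$ factors as $pg=hq$ through the canonical epimorphism $q:P_0\to T$ for some $h:T\to M/N$. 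The Ext-vanishing $\Ext_A^1(T,N)=0$ will then supply a lift $\tilde h:T\to M$ of $h$ along $p$, and the corrected map $g-\tilde h q:P_0\to M$ will satisfy $p(g-\tilde h q)=0$, hence factor as $i\circ g''$ for some $g'':P_0\to N$. Finally $g''\sigma=f$ will follow from $q\sigma=0$, $g\sigma=if$, and $i$ being mono, completing (1).

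For (2), my plan is to chain Proposition \ref{tilting}(1), part (1) of this proposition, and Lemma \ref{finendo star}. Any tilting module is faithful (axiom (T3) embeds $A$ into $T_0\in Add(T)$) and is silting by Proposition \ref{tilting}(1); part (1) then upgrades it to a faithful finendo quasitilting module. Conversely, any finendo quasitilting module is a finendo $\ast$-module by Lemma and Definition \ref{eq def quasitilting}(1), so Lemma \ref{finendo star} identifies it with a tilting module over $A/Ann(T)$, and faithfulness forces $Ann(T)=0$, upgrading to tilting over $A$. The main technical obstacle is the diagram chase of the second paragraph, which must balance the lift obtained from $M\in\mathscr{D}_\sigma$ against a correction term produced from the Ext-vanishing hypothesis on $N$.
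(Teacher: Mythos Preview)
Your proof is correct. The approach to part~(1) differs from the paper's: you verify characterisation~(3) of Lemma and Definition~\ref{eq def quasitilting}, namely $Gen(T)=\overline{Gen(T)}\cap T^{\perp_1}$, while the paper verifies characterisation~(2), showing that $Gen(T)\subseteq Pres(T)$ by proving that the kernel of the universal map $u:T^{(I)}\to M$ lies in $\mathscr{D}_\sigma$. Both arguments reduce to a diagram chase showing a certain module lies in $\mathscr{D}_\sigma$, but the mechanisms are different: the paper exploits the universality of $u$ (every map $T\to M$ factors through $u$) to correct the lift, whereas you exploit the hypothesis $\Ext^1_A(T,N)=0$ to lift a map $T\to M/N$ back to $M$. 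Your route has the small advantage that the correction term $\tilde h q$ is built in one step from the Ext-vanishing, making the chase slightly cleaner; the paper's route has the advantage of reusing the universal-map idea already present in the proof of (3)$\Rightarrow$(2) in Lemma and Definition~\ref{eq def quasitilting}. For part~(2), your argument is essentially the same as the paper's, which simply invokes Lemma~\ref{finendo star}.
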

\begin{proof}
(1) Let $T$ be silting with respect to a projective presentation $\sigma:P_{-1}\rightarrow P_0$. Then we know from Lemma \ref{ext-orthogonality1}(1) that $Gen(T)=\mathscr{D}_\sigma$ is closed under direct products, which means that $T$ is finendo.
Further, $T$ is Ext-projective in $Gen(T)$ by Remark \ref{a few observations}(1). It remains to show that $Gen(T)\subseteq Pres(T)$. Let $M$ lie in $Gen(T)$, let $I$ be $Hom_A(T,M)$, and consider the universal map $u:T^{(I)}\rightarrow M$ (which is then surjective). We will show that $K:=Ker(u)$ lies in $\mathscr{D}_\sigma=Gen(T)$, thus finishing the proof. 

Pick $f:P_{-1}\to K$. Since $T^{(I)}$ lies in $\mathscr{D}_\sigma$, we have the following commutative diagram
$$\xymatrix{& P_{-1}\ar[d]^f\ar[r]^\sigma&P_0\ar[r]^\pi\ar[d]^g &T\ar[r]\ar[d]^h&0\\ 0\ar[r]& K\ar[r]^k&T^{(I)}\ar[r]^u&M\ar[r]&0.}$$
By the universality of $u$, there is  $\tilde{h}:T\rightarrow T^{(I)}$ such that $u\tilde{h}=h$. It then follows by a routine diagram chase that there is a map $\tilde{g}:P_0\rightarrow K$ such that $\tilde{g}\sigma=f$, as wanted.

Statement (2) is an immediate consequence of Lemma \ref{finendo star}.
\end{proof}

In particular, it follows that we can recover the additive closure of a silting module from its associated torsion class (see Lemma \ref{recovering}). We will say that two silting  modules $T$ and $T^\prime$ are \textbf{equivalent} if $Add(T)=Add(T^\prime)$.

The next result measures the difference between silting and quasitilting modules, and it characterises silting modules in terms of a condition (S3) which is the silting counterpart of condition (T3) in Definition \ref{tilt}.

\begin{proposition}\label{eq def silting module}
The following are equivalent for an $A$-module $T$ and a projective presentation $\sigma$ of $T$.
\begin{enumerate}
\item $T$ is a silting module with respect to $\sigma$. 
\item $T$ is a partial silting module with respect to $\sigma$ and 
\begin{enumerate}
\item[(S3)] there is an exact sequence $$\xymatrix{A\ar[r]^{\phi} & T_0\ar[r] & T_1\ar[r] & 0,}$$ with $T_0$ and $T_1$ in $Add(T)$ and $\phi$ a left $\mathscr{D}_\sigma$-approximation.
\end{enumerate}
\end{enumerate}
\end{proposition}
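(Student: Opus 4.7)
The plan is to prove the two directions separately, with the forward direction leveraging the quasitilting machinery already established, and the reverse direction using a direct factorisation argument through the approximation.

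For (1) $\Rightarrow$ (2): Assume $T$ is silting with respect to $\sigma$, i.e.\ $Gen(T)=\mathscr{D}_\sigma$. Then $T$ is in particular partial silting with respect to $\sigma$ (Remark \ref{a few observations}(1)). By Proposition \ref{quasi}(1), $T$ is a finendo quasitilting module, so Proposition \ref{bar} provides an exact sequence $A\xrightarrow{\phi} T_0\to T_1\to 0$ with $T_0,T_1\in Add(T)$ and $\phi$ a left $Gen(T)$-approximation. Since $Gen(T)=\mathscr{D}_\sigma$, the map $\phi$ is also a left $\mathscr{D}_\sigma$-approximation, giving (S3).

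For (2) $\Rightarrow$ (1): I need to show $Gen(T)=\mathscr{D}_\sigma$. One inclusion is essentially free: (S2) gives $T\in\mathscr{D}_\sigma$; since $\mathscr{D}_\sigma$ is a torsion class by (S1), it is closed under arbitrary coproducts and summands, so $Add(T)\subseteq\mathscr{D}_\sigma$; and since $\mathscr{D}_\sigma$ is closed under epimorphic images by Lemma \ref{ext-orthogonality1}(1), we obtain $Gen(T)\subseteq\mathscr{D}_\sigma$. The essential step is the reverse inclusion. Pick $M\in\mathscr{D}_\sigma$ and any surjection $g\colon A^{(I)}\twoheadrightarrow M$ (e.g.\ with $I=M$, or using a free cover). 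For each $i\in I$, the component $g_i\colon A\to M$ factors through the left $\mathscr{D}_\sigma$-approximation $\phi$ as $g_i=\psi_i\phi$ for some $\psi_i\colon T_0\to M$. Assembling the $\psi_i$ into a map $\psi\colon T_0^{(I)}\to M$, we obtain $\psi\circ\phi^{(I)}=g$, so $\psi$ is surjective. Hence $M$ is an epimorphic image of $T_0^{(I)}\in Add(T)$, proving $M\in Gen(T)$.

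The whole argument is quite short and there is no genuinely hard step: the substantive content has already been absorbed into Propositions \ref{bar} and \ref{quasi} and into the basic properties of $\mathscr{D}_\sigma$ in Lemma \ref{ext-orthogonality1}. The only point to watch is that in (2) $\Rightarrow$ (1) one really does get surjectivity of the induced map $\psi$ from the surjectivity of $g$, which follows immediately from $g=\psi\circ\phi^{(I)}$ and the fact that this forces the image of $\psi$ to contain the image of $g$. No exactness on the left of (S3) is used, in line with the statement.
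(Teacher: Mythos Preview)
Your proof is correct and follows essentially the same approach as the paper's own proof: both directions use Propositions \ref{quasi} and \ref{bar} for (1)$\Rightarrow$(2), and for (2)$\Rightarrow$(1) both factor an arbitrary surjection $A^{(I)}\twoheadrightarrow M$ through the approximation $\phi^{(I)}$ to exhibit $M$ as a quotient of $T_0^{(I)}$. Your version simply spells out the component-wise factorisation and the inclusion $Gen(T)\subseteq\mathscr{D}_\sigma$ in more detail than the paper does.
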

\begin{proof}
(1)$\Rightarrow$(2): This follows from Proposition \ref{quasi}(1) and Proposition \ref{bar} using that $\mathscr{D}_\sigma=Gen(T)$.

(2)$\Rightarrow$(1): Since $T$ is partial silting with respect to $\sigma$, it is clear that $Gen(T)\subseteq \mathscr{D}_\sigma$. If $M$ lies in $\mathscr{D}_\sigma$, any surjection $f:A^{(I)}\ra M$
factors through the $\mathscr{D}_\sigma$-approximation $\phi^{(I)}$ via a surjection $g:T_0^{(I)}\ra M$. Thus, $M$ lies in $Gen(T)$.
\end{proof}

A well-known result of Bongartz - later proved in full generality in \cite{CT} - states that every partial tilting module can be completed to a tilting module. The following theorem now generalises it to our setting.

\begin{theorem}\label{complement}
Every partial silting $A$-module $T$ with respect to a projective presentation $\sigma$ is a direct summand of a silting $A$-module $\bar{T}=T\oplus M$ with the same associated torsion class, that is, $Gen(\bar{T})=\mathscr{D}_\sigma$. 
\end{theorem}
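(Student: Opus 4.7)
The plan is to produce $M$ explicitly via a pushout and then verify the conditions of Proposition \ref{eq def silting module}. Set $J := Hom_{D(A)}(\sigma, A[1])$; for each class $j \in J$ pick a representative chain map $h_j : P_{-1} \to A$, and assemble these into the universal map $h : P_{-1}^{(J)} \to A$. Form the pushout
$$\xymatrix{P_{-1}^{(J)} \ar[r]^{\sigma^{(J)}} \ar[d]_{h} & P_0^{(J)} \ar[d]^{\alpha} \\ A \ar[r]^{\phi} & M,}$$
which produces a projective presentation $\sigma_M := (\sigma^{(J)}, -h) : P_{-1}^{(J)} \to P_0^{(J)} \oplus A$ of $M$ together with an exact sequence $A \xrightarrow{\phi} M \to T^{(J)} \to 0$. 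My candidate complement is $\bar T := T \oplus M$, with projective presentation $\bar\sigma := \sigma \oplus \sigma_M$.

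The easier verifications come first. Since $\mathscr{D}_{\bar\sigma} = \mathscr{D}_\sigma \cap \mathscr{D}_{\sigma_M}$, establishing $\mathscr{D}_{\bar\sigma} = \mathscr{D}_\sigma$ reduces to $\mathscr{D}_\sigma \subseteq \mathscr{D}_{\sigma_M}$: given $X \in \mathscr{D}_\sigma$ and $f : P_{-1}^{(J)} \to X$, the surjectivity of the $J$-fold product $Hom_A(\sigma^{(J)}, X)$ of the surjection $Hom_A(\sigma, X)$ yields $g_0 : P_0^{(J)} \to X$ with $g_0 \sigma^{(J)} = f$, whence $(g_0, 0) \sigma_M = f$. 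The same reasoning, combined with the pushout universal property, shows that $\phi$ is a left $\mathscr{D}_\sigma$-approximation of $A$: any $f : A \to X$ with $X \in \mathscr{D}_\sigma$ extends to a map $M \to X$ once $f h : P_{-1}^{(J)} \to X$ is factored through $\sigma^{(J)}$, which is possible because $X \in \mathscr{D}_\sigma$.

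The main obstacle is proving $M \in \mathscr{D}_\sigma$. I would pass to $D(A)$ and apply Lemma \ref{ext-orthogonality1}(3), which reduces the task to $Hom_{D(A)}(\sigma, M[1]) = 0$. View $h$ as a morphism $\sigma^{(J)} \to A[1]$ in $D(A)$ and let $M'$ denote the shift $Cone(h)[-1]$, fitting in a triangle $A \to M' \to \sigma^{(J)} \to A[1]$. Apply $Hom_{D(A)}(\sigma, -)$: the connecting map $h_\ast : Hom_{D(A)}(\sigma, \sigma^{(J)}) \to Hom_{D(A)}(\sigma, A[1]) = J$ is surjective by the universal construction of $h$ (the inclusion $\iota_j$ of the $j$-th summand maps to $j$), while $Hom_{D(A)}(\sigma, \sigma^{(J)}[1]) = 0$ follows from the truncation triangle $Ker(\sigma)^{(J)}[1] \to \sigma^{(J)} \to T^{(J)} \to Ker(\sigma)^{(J)}[2]$ together with the vanishing $Hom_{D(A)}(\sigma, Y[n]) = 0$ for any module $Y$ and $n \geq 2$ (since $\sigma$ is a 2-term complex of projectives) and the fact that $T^{(J)} \in \mathscr{D}_\sigma$ by the coproduct closure built into (S1). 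The long exact sequence then forces $Hom_{D(A)}(\sigma, M'[1]) = 0$. A cohomology computation on the triangle shows $M'$ is concentrated in degrees $-1, 0$ with $H^0(M') \cong M$, and the truncation triangle $H^{-1}(M')[1] \to M' \to M$, combined once more with the 2-term vanishing, transfers this to $Hom_{D(A)}(\sigma, M[1]) = 0$.

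With $M \in \mathscr{D}_\sigma$ established, $\bar T$ is partial silting with respect to $\bar\sigma$ (both summands lie in $\mathscr{D}_{\bar\sigma} = \mathscr{D}_\sigma$), and the exact sequence $A \xrightarrow{\phi} M \to T^{(J)} \to 0$ witnesses condition (S3) with $T_0 = M \in Add(\bar T)$ and $T_1 = T^{(J)} \in Add(T)$. Proposition \ref{eq def silting module} then yields that $\bar T$ is silting with $Gen(\bar T) = \mathscr{D}_{\bar\sigma} = \mathscr{D}_\sigma$, as required.
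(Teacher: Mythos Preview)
Your proof is correct and follows the same overall Bongartz-type strategy as the paper: build $M$ as the pushout of a universal map along a coproduct of copies of $\sigma$, verify $M\in\mathscr{D}_\sigma$, and conclude via Proposition~\ref{eq def silting module}. The differences are in execution. The paper indexes the coproduct by $I=\Hom_A(P_{-1},A)$, while you use the smaller set $J=\Hom_{D(A)}(\sigma,A[1])$; both choices make the relevant connecting map surjective, so this is cosmetic. The substantive difference is in the proof that $M\in\mathscr{D}_\sigma$: the paper stays in $Mod(A)$ and performs a direct diagram chase, lifting a test map $g:P_{-1}\to M$ through the pushout square and the universal map to produce a factorisation through $\sigma$. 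You instead invoke Lemma~\ref{ext-orthogonality1}(3) and work in $D(A)$, identifying $\sigma_M$ with the shifted cone of $h:\sigma^{(J)}\to A[1]$ and reading off $\Hom_{D(A)}(\sigma,M[1])=0$ from the long exact sequence, using the truncation triangle of $\sigma^{(J)}$ together with (S1) and (S2). Your argument is somewhat more conceptual and foreshadows the derived-category viewpoint of Section~\ref{Silting complexes}; the paper's chase is more elementary and self-contained within Section~\ref{section silting}. Either way, the identification $\mathscr{D}_{\bar\sigma}=\mathscr{D}_\sigma$ and the verification of (S3) are handled essentially identically.
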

\begin{proof}
Let $T$ be a partial silting $A$-module and let $\sigma:P_{-1}\rightarrow P_0$ be a projective presentation of $T$. In order to find a complement for $T$, we begin by constructing an approximation sequence for $A$ in $\mathscr{D}_\sigma$. Consider the universal map $\psi:P_{-1}\,^{(I)}\ra A$ with $I=Hom_A(P_{-1},A)$. We get the following pushout diagram
\begin{equation}\label{diag1}
\xymatrix{P_{-1}\,^{(I)}\ar[d]^{\psi}\ar[r]^{\sigma^{(I)}} & P_0\,^{(I)}\ar[d]^{\psi_1}\ar[r] & T^{(I)}\ar @{=}[d]\ar[r] & 0\\
A\ar[r]^{\phi} & M\ar[r]^{\pi} & T^{(I)}\ar[r] & 0.}
\end{equation}
If $M$ lies in $\mathscr{D}_\sigma$ then it easily follows from the universal property of the pushout that $\phi$ is a left $\mathscr{D}_\sigma$-approximation. We will, therefore, show that any map $g:P_{-1}\ra M$ factors through $\sigma$.
Since $T^{(I)}$ lies in $\mathscr{D}_\sigma$, the composition $\pi g$ must factor through $\sigma$ via some map $g_1:P_0\ra T^{(I)}$, yielding the following commutative diagram
$$\xymatrix{P_{-1}\ar[r]^\sigma\ar[d]^g & P_0\ar[d]^{g_1}\\ M\ar[r]^\pi & T^{(I)}}$$
Moreover, since $P_0$ is projective, there is a map $g_2:P_0\ra M$ such that $g_1=\pi g_2$. It follows from a routine diagram chase that $g_2\sigma-g$ factors through $\phi$. 
Now, by the construction of $\psi$ and the commutativity of diagram (\ref{diag1}), there are component maps $\psi^\prime:P_{-1}\ra X$ and $\psi_1^\prime:P_0\ra M$ fulfilling 
$g_2\sigma-g=f\psi^\prime=\psi_1^\prime\sigma.$
Consequently, the map $g$ factors through $\sigma$, proving that $M$ lies in $\mathscr{D}_\sigma$.

We will now prove that $\bar{T}:=T\oplus M$ is a silting $A$-module. Since the left square of diagram (\ref{diag1}) is a pushout diagram, it yields a projective presentation of $M$
$$\xymatrix{P_{-1}\,^{(I)}\ar[r]^{\!\!\!\!(g\,\,\,\sigma^{(I)})} & A\oplus P_0\,^{(I)}\ar[r]^{\,\,\,\,\,\,\Tiny{\left(\begin{array}{c}-\phi\\f\end{array}\right)}} & M\ar[r] & 0.}$$
This gives us a projective presentation of $\bar{T}$ by considering the direct sum $\gamma:=\sigma\oplus (g\,\,\,\sigma^{(I)})$. Then  $\mathscr{D}_\gamma=\mathscr{D}_\sigma$ as a consequence of the following two easily verifiable statements that we leave to the reader:
\begin{enumerate}
\item Let $(\theta_i)_{i \in I}$ be a family of maps in $Proj(A)$ and $\theta=\bigoplus_{i\in I} \theta_i$. Then $\mathscr{D}_{\theta}=\bigcap_{i\in I}\mathscr{D}_{\theta_i}$.
\item Let $\theta:Q_{-1}\rightarrow Q_0$ and $\beta:Q_{-1}\rightarrow  Q_0^\prime$ be maps in $Proj(A)$, and  $(\theta,\beta):Q_{-1}\rightarrow Q_0\oplus Q_0^\prime, \,p\mapsto(\theta(p),\beta(p))$. Now $\mathscr{D}_\theta\subseteq \mathscr{D}_{(\theta,\beta)}$.
\end{enumerate}
So $\bar{T}$ is a partial silting module as it lies in $\mathscr{D}_\gamma=\mathscr{D}_\sigma$, and it is even a silting module by Proposition \ref{eq def silting module}.
\end{proof}

\subsection{Examples}\label{examples subsection}
We have seen in Proposition \ref{tilting} that (partial) tilting modules are examples of (partial) silting modules. In this subsection we discuss  non-tilting examples of silting modules. An important class of examples of (partial) silting modules is given by $\tau$-rigid and support $\tau$-tilting modules over a finite dimensional $\Kbb$-algebra.

\begin{proposition}\label{tau-tilting}
Let $\Lambda$ be a finite dimensional $\Kbb$-algebra and let $T$ be in $mod(\Lambda)$. Then the following hold.
\begin{enumerate}
\item  $T$ is partial silting if and only if it is $\tau$-rigid.
\item $T$ is silting if and only if it is support $\tau$-tilting.
\item  \cite{Wei2}  $T$ is (finendo) quasitilting if and only if it is support $\tau$-tilting.  
\end{enumerate}
\end{proposition}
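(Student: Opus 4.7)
The plan is to prove the three statements jointly: (1) is handled directly via Theorem~\ref{tau-free description}, the forward direction of (2) is read off from the same theorem, and all remaining implications reduce to the single claim that a finitely generated finendo quasitilting module is support $\tau$-tilting. For (1), let $\sigma$ be the minimal projective presentation of $T$. By Theorem~\ref{tau-free description}(1), $\mathscr{D}_\sigma = \{M\in Mod(\Lambda) : Hom_\Lambda(M,\tau T)=0\}$; since $\sigma$ lies in $K^b(proj(\Lambda))$ and is compact in $D(\Lambda)$, the class $\mathscr{D}_\sigma$ is closed under coproducts by Lemma~\ref{ext-orthogonality1}(3), and combined with Lemma~\ref{ext-orthogonality1}(1) this makes it a torsion class. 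So $T$ is partial silting with respect to $\sigma$ exactly when $T\in\mathscr{D}_\sigma$, i.e., when $T$ is $\tau$-rigid. Conversely, if $T$ is partial silting with respect to some projective presentation $\sigma'$, Lemma~\ref{ext-orthogonality1}(2) gives $Gen(T)\subseteq\mathscr{D}_{\sigma'}\subseteq T^{\perp_1}$, and $\tau$-rigidity follows from Theorem~\ref{tau-free description}(2).

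For the forward direction of (2), if $T$ is support $\tau$-tilting, Theorem~\ref{tau-free description}(3) provides a projective presentation $\tilde{\sigma}$ with $Gen(T)=\mathscr{D}_{\tilde{\sigma}}$, so $T$ is silting; combined with Proposition~\ref{quasi}(1), this immediately yields the backward direction of (3) as well. The backward direction of (2) likewise reduces to (3) forward via Proposition~\ref{quasi}(1), so the remaining task is to prove that a finitely generated finendo quasitilting $T\in mod(\Lambda)$ is support $\tau$-tilting. For this, Lemma and Definition~\ref{eq def quasitilting} gives $Gen(T)\subseteq T^{\perp_1}$, so $T$ is $\tau$-rigid by Theorem~\ref{tau-free description}(2), and Corollary~\ref{air} provides for every $M\in Mod(\Lambda)$ a left $Gen(T)$-approximation $M\to T_0$ with Ext-projective cokernel, where $T_0\in Add(T)$ because $T$ is finendo.

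For $M\in mod(\Lambda)$, the image of $M\to T_0=T^{(I)}$ is finitely generated, hence contained in some finite sum $T^{(I_0)}\in add(T)$, producing a left $gen(T)$-approximation $M\to T^{(I_0)}$ inside $mod(\Lambda)$; right approximations are automatic from the torsion radical since $\Lambda$ is artinian. Thus $gen(T):=Gen(T)\cap mod(\Lambda)$ is a functorially finite torsion class in $mod(\Lambda)$. The Adachi--Iyama--Reiten bijection~\cite[Theorem 2.7]{AIR} between functorially finite torsion classes in $mod(\Lambda)$ and support $\tau$-tilting modules sends such a class to the direct sum of its indecomposable Ext-projectives; since $add(T)$ coincides with the Ext-projectives of $gen(T)$ by Lemma~\ref{recovering}, $T$ itself is the support $\tau$-tilting module assigned to $gen(T)$. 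The main obstacle is this finiteness reduction, namely descending the large $Gen(T)$-approximations from Corollary~\ref{air} to $add(T)$-approximations inside $mod(\Lambda)$ so that the Adachi--Iyama--Reiten machinery applies.
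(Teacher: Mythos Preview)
Your treatment of (1), of the implication ``support $\tau$-tilting $\Rightarrow$ silting'' in (2), and of the implication ``support $\tau$-tilting $\Rightarrow$ finendo quasitilting'' in (3) is correct and essentially matches the paper. The interesting divergence is in the remaining direction: the paper proves ``silting $\Rightarrow$ support $\tau$-tilting'' directly, by observing that a silting module satisfies (S3) with an approximation sequence $\Lambda\to T_0\to T_1\to 0$ that can be taken in $mod(\Lambda)$ and then invoking \cite[Proposition~2.14]{J}; it then deduces (3) from (2). You instead aim at ``quasitilting $\Rightarrow$ support $\tau$-tilting'' via functorial finiteness of $gen(T)$ and the Adachi--Iyama--Reiten bijection, and deduce (2) from (3). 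This alternative route is perfectly reasonable and arguably more self-contained, avoiding the external reference to \cite{J}.

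There is, however, a genuine gap in your execution. Corollary~\ref{air} (equivalently Theorem~\ref{ATTanalog}) guarantees, for arbitrary $M$, a left $Gen(T)$-approximation $M\to B$ with Ext-projective \emph{cokernel}; it does not assert that the codomain $B$ itself lies in $Add(T)$, let alone that $B=T^{(I)}$. Your subsequent reduction to $T^{(I_0)}$ therefore rests on an unjustified premise. The fix is to use Proposition~\ref{bar} only for $M=\Lambda$, where one genuinely obtains $T_0\in Add(T)$; then your ``image is finitely generated'' argument correctly produces a left $Gen(T)$-approximation $\Lambda\to T_0'$ with $T_0'\in add(T)$, and for arbitrary $M\in mod(\Lambda)$ one takes a surjection $\Lambda^n\to M$ and forms the pushout along $\Lambda^n\to (T_0')^n$ to obtain a left $gen(T)$-approximation of $M$ inside $mod(\Lambda)$. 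A second, smaller point: Lemma~\ref{recovering} identifies $Add(T)$ with the Ext-projectives of $Gen(T)$, not $add(T)$ with those of $gen(T)$; the passage requires the (easy) observation that a finitely generated module which is Ext-projective in $gen(T)$ is already Ext-projective in $Gen(T)$, using that $T$ is finitely presented and $Gen(T)$ is the direct-limit closure of $gen(T)$.
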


\begin{proof}
(1): This follows from Theorem \ref{tau-free description}(1) and (2) and Remark \ref{a few observations}(1) and (2).

(2) If $T$ is silting, then by (1) it is $\tau$-rigid, and it satisfies condition (S3) in Theorem \ref{eq def silting module},
where the $Gen(T)$-approximation sequence
$\xymatrix{\Lambda\ar[r]^{} & T_0\ar[r] & T_1\ar[r] & 0}$
 can be taken in $mod(\Lambda)$. Now the claim follows by \cite[Proposition 2.14]{J}.
The converse implication follows from Theorem \ref{tau-free description}(3).

(3) First, recall that finitely generated $\Lambda$-modules are always finendo.   By (2) the statement can be rephrased by saying that $T$ is quasitilting if and only if it is silting. Now the if-part is just  Proposition \ref{quasi}(1).    We show that for $T$ in $mod(\Lambda)$ also the converse  holds true. If $T$ is quasitilting, then by Proposition \ref{bar}, it satisfies condition (S3) in Theorem \ref{eq def silting module},
and it  is Ext-projective in $Gen(T)$. By Theorem \ref{tau-free description}(2) the latter  means that $T$ is $\tau$-rigid. We conclude from (1) that $T$ is a partial silting module satisfying (S3), or equivalently, a silting module. 
\end{proof}

\begin{remark}\label{functfin}
(1) Corollary \ref{air} can now be viewed as an analog of \cite[Theorem 2.7]{AIR} stating that over a finite dimensional algebra  $\Lambda$, there is a bijection between isomorphism classes of basic support $\tau$-tilting modules and functorially finite torsion classes $\Tcal$ in $mod(\Lambda)$. Indeed,  left $\Tcal$-approximations in $mod(\Lambda)$ can be chosen to be minimal, and then the cokernel is always  Ext-projective by a well-known lemma due to Wakamatsu.
 
 \smallskip
 
(2) A further consequence of  Proposition \ref{tau-tilting} is that for any  support $\tau$-tilting module $T$ over a finite dimensional $\Kbb$-algebra $\Lambda$, the functor $Hom_\Lambda(T,-)$ induces an equivalence between $Gen(T_A)$ and $Cogen(\mathsf{D}(T)_B)$, 
cf.~\cite[Proposition 3.5]{J} and \cite[Theorem 4.4]{HKM}. For more details on such equivalences see \cite{CDT}.
\end{remark}

The following is an example (taken from \cite[Example 5.3]{CDT}) of a finitely generated silting module which is neither tilting nor finitely presented.

\begin{example}\label{CDT}
Let  $Q$ be a quiver with two vertices, 1 and 2, and countably many arrows from 1 to 2. Let $P_i$ be the indecomposable projective $\Kbb Q$-module $e_i\Kbb Q$ for $i=1,2$. We show that $T:=P_2/soc(P_2)$ is a silting module (of projective dimension one) which is not tilting. 
Indeed, as observed in \cite{CDT}, the class $Gen(T)$ consists precisely of the semisimple injective $\Kbb Q$-modules and, thus, we have $Gen(T)=(P_1)^{\circ}\subsetneq T^{\perp_1}$. In particular, $T$ is not a tilting module. Of course,  $T$ is not finitely presented. It admits the following projective presentation
$$\xymatrix{0\ar[r] & P_1^{(\Nbb)}\ar[r]^\sigma & P_2\ar[r] & T\ar[r] & 0,}$$
with $\mathscr{D}_\sigma=T^{\perp_1}$. Let $\gamma$ be the projective presentation of $T$ obtained as the direct sum of $\sigma$ with the trivial map $P_1\ra 0$. 
Then we have that
 $$\mathscr{D}_{\gamma}=T^{\perp_1}\cap P_1\,^{\circ}=P_1\,^{\circ}=Gen(T),$$
thus proving that $T$ is a silting module.
\end{example}

\section{Silting complexes}\label{Silting complexes}
In this section we discuss (large) silting complexes and how they relate to t-structures, co-t-structures and silting modules. 
We first investigate the bijections between silting complexes and certain t-structures and co-t-structures (\cite{KY}). Then we show that mapping a 2-silting complex to its cohomology defines a bijection between (equivalence classes of) 2-silting complexes and (equivalence classes of) silting modules. In particular, this justifies our choice of name for the class of modules under study.

\subsection{Silting complexes, t-structures and co-t-structures}
We begin by extending the notion of silting and presilting complexes in order to include complexes of large projective modules. We adopt a definition due to Wei 
\cite[Definition 3.1]{Wei1}, who called such complexes semi-tilting.

\begin{definition}\label{big silting}
A bounded complex of projective $A$-modules $\sigma$ is said to be \textbf{presilting} if 
\begin{enumerate}
\item $Hom_{D(A)}(\sigma,\sigma^{(I)}[i])=0$, for all sets $I$ and $i>0$.
\end{enumerate}
It is furthermore \textbf{silting} if it also satisfies
\begin{enumerate}
\item[(2)] the smallest triangulated subcategory of $D(A)$ containing $Add(\sigma)$ is $K^b(Proj(A))$.
\end{enumerate}
We  call $\sigma$  \textbf{$n$-presilting}, respectively \textbf{$n$-silting}, if it is an $n$-term complex of projective $A$-modules. Hereby, and throughout this section, an \textbf{$n$-term} complex of projective modules means a complex concentrated between degrees $-n+1$ and $0$.
\end{definition}

For a presilting complex $\sigma$, we investigate the subcategory $aisle(\sigma)$ from Example \ref{example t-str}(4),  and the subcategory 
$\sigma^{\perp_{>0}}$. They  play an important role in determining whether $\sigma$ is silting or not, cf.~\cite[Theorem 1.3]{HKM} and \cite[Corollary 4.7]{AI}.

\begin{proposition}\label{silting criterion}
The following statements are equivalent for an $n$-term complex $\sigma$ of projective $A$-modules.
\begin{enumerate}
\item The complex $\sigma$ is ($n$-)silting. 
\item $\sigma$ is presilting, $\sigma^{\perp_{>0}}\cap D^{\leq 0}$ is closed for coproducts in $D(A)$, and the set $\{\sigma[i]:i\in\Zbb\}$ generates $D(A)$.
\item $aisle(\sigma)=\sigma^{\perp_{>0}}$.
\item $\sigma$ is presilting and $\sigma^{\perp_{>0}}$ lies in $D^{\leq 0}$.
\end{enumerate}
\end{proposition}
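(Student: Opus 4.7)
The plan is to close the cycle of implications via the t-structure $(aisle(\sigma),\sigma^{\perp_{<0}})$ provided by Example \ref{example t-str}(4). I would organise the argument into three blocks: $(3)\Leftrightarrow(4)$, $(3)\Leftrightarrow(2)$, and $(1)\Leftrightarrow(2)$.

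For $(3)\Leftrightarrow(4)$, the easy direction $(3)\Rightarrow(4)$ follows because $\sigma$ and all $\sigma^{(I)}$ lie in $aisle(\sigma)=\sigma^{\perp_{>0}}$ (so presilting holds), and $aisle(\sigma)\subseteq D^{\leq 0}$ since $D^{\leq 0}$ is itself a coproduct-closed suspended subcategory containing the $n$-term complex $\sigma$. The harder converse $(4)\Rightarrow(3)$ rests on the key observation that $(4)$ already forces $\{\sigma[i]:i\in\Zbb\}$ to generate $D(A)$: if $Hom(\sigma,W[j])=0$ for every $j\in\Zbb$, then for every $n\geq 0$ we have $W[n]\in\sigma^{\perp_{>0}}\subseteq D^{\leq 0}$, so $H^{m}(W)=0$ for all $m$, and hence $W\cong 0$ in $D(A)$. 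With generation available, I would first argue that $\sigma^{\perp_{>0}}$ is coproduct-closed by a stalk-wise computation, reducing via the brutal truncations of $\sigma$ to the formula $Hom_{D(A)}(P,Z[m])=Hom_{A}(P,H^{m}(Z))$ for a projective $P$ together with the assumption $\sigma^{\perp_{>0}}\subseteq D^{\leq 0}$. This yields $aisle(\sigma)\subseteq \sigma^{\perp_{>0}}$. The reverse inclusion follows from applying the truncation triangle $Y^{\leq 0}\to Y\to W\to Y^{\leq 0}[1]$ to $Y\in \sigma^{\perp_{>0}}$: since $Y^{\leq 0}\in aisle(\sigma)\subseteq \sigma^{\perp_{>0}}$ and $W\in \sigma^{\perp_{<0}}[-1]$, every $Hom(\sigma,W[j])$ vanishes, and generation forces $W=0$.

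The equivalence $(3)\Leftrightarrow(2)$ is then essentially formal. Under $(3)$, $\sigma^{\perp_{>0}}\cap D^{\leq 0}=aisle(\sigma)$ is coproduct-closed and generation has been established. Conversely, $(2)\Rightarrow(3)$ proceeds as in the previous block: $\sigma^{\perp_{>0}}\cap D^{\leq 0}$ is a coproduct-closed suspended subcategory containing $\sigma$, so contains $aisle(\sigma)$, and the truncation/generation trick gives the reverse inclusion. For $(1)\Leftrightarrow(2)$, the implication $(1)\Rightarrow(2)$ comes from Remark \ref{generating conditions}: since $A$ lies in the smallest triangulated subcategory containing $Add(\sigma)$, the smallest coproduct-closed triangulated subcategory containing $\sigma$ contains $A$ and hence equals $D(A)$; the remaining closure property in $(2)$ follows once $\sigma^{\perp_{>0}}\subseteq D^{\leq 0}$ is verified by a d\'evissage along the construction of $A$ from $Add(\sigma)$. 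For $(2)\Rightarrow(1)$, the delicate task is to place $A$ itself inside the smallest triangulated subcategory containing $Add(\sigma)$; I would apply the truncation triangle of $(aisle(\sigma),\sigma^{\perp_{<0}})$ to $A$ and use an $Add(\sigma)$-approximation, in the spirit of condition (S3) of Proposition \ref{eq def silting module}, to realise the truncations explicitly as iterated extensions and summands of objects of $Add(\sigma)$.

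I expect the main obstacle to be this last step: certifying that the smallest triangulated subcategory containing $Add(\sigma)$ equals $K^{b}(Proj(A))$. Remark \ref{generating conditions} bypasses the issue in the compact case, but in the present large setting one must build $A$ from $Add(\sigma)$ while carefully controlling that the construction remains within $K^{b}(Proj(A))$ and stays bounded in both directions, exploiting that $\sigma$ is an $n$-term complex. A subsidiary delicate point is the stalk-wise coproduct-closure of $\sigma^{\perp_{>0}}$ invoked above, which propagates vanishing of $Hom(\sigma,Y_{\alpha}[i])$ from each summand to the full coproduct using the bounded structure of $\sigma$ together with the hypothesis $\sigma^{\perp_{>0}}\subseteq D^{\leq 0}$.
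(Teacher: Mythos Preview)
Your overall strategy is sensible and contains a genuinely useful observation: under (4) the generation property is immediate, since any $W$ with $Hom_{D(A)}(\sigma,W[j])=0$ for all $j$ satisfies $W[k]\in\sigma^{\perp_{>0}}\subseteq D^{\leq 0}$ for every $k$, forcing $H^m(W)=0$ for all $m$. The paper does not isolate this, proceeding instead through the cycle $(1)\Rightarrow(2)\Rightarrow(3)\Rightarrow(4)\Rightarrow(1)$ and outsourcing the two hard implications to \cite[Propositions~4.2 and~3.12]{Wei1}.

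There is, however, a genuine gap in your argument for coproduct-closure of $\sigma^{\perp_{>0}}$ under (4). The stalk formula $Hom_{D(A)}(P,Z[m])=Hom_A(P,H^m(Z))$ is correct, but it does \emph{not} propagate vanishing to coproducts when $P$ is not finitely generated. Concretely, for a $2$-term $\sigma\colon P_{-1}\to P_0$ and $Z\in D^{\leq 0}$, the brutal-truncation triangle gives
\[
Hom_{D(A)}(\sigma,Z[1])\ \cong\ \Coker\bigl(Hom_A(P_0,H^0(Z))\to Hom_A(P_{-1},H^0(Z))\bigr),
\]
so $Hom_{D(A)}(\sigma,Z[1])=0$ holds precisely when $H^0(Z)\in\mathscr{D}_\sigma$. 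Taking $Z=\bigoplus_\alpha Y_\alpha$ thus reduces the question to whether $\mathscr{D}_\sigma$ is closed under coproducts --- and this is exactly the condition that can fail for large $\sigma$ (compare Remark~\ref{a few observations}(2) and Lemma~\ref{lie in orth}(3)). The same problem recurs in your sketch of $(1)\Rightarrow(2)$: even granting $\sigma^{\perp_{>0}}\subseteq D^{\leq 0}$ via d\'evissage, you have only reduced to $\sigma^{\perp_{>0}}\cap D^{\leq 0}=\sigma^{\perp_{>0}}$, whose coproduct-closure is still unproved. Without the inclusion $aisle(\sigma)\subseteq\sigma^{\perp_{>0}}$, your truncation argument for the reverse inclusion also breaks down, since you cannot conclude that the aisle-truncation of $Y$ lies in $\sigma^{\perp_{>0}}$.

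The paper bypasses both difficulties by invoking \cite{Wei1}: Proposition~4.2 there yields coproduct-closure under (1), while Proposition~3.12 handles $(4)\Rightarrow(1)$ by building $A$ through an explicit finite tower of $Add(\sigma)$-approximation triangles whose length is controlled by $n$ and whose termination is forced by $\sigma^{\perp_{>0}}\subseteq D^{\leq 0}$. You correctly flag this last construction as the main obstacle, but the coproduct-closure step is an equally serious gap that your proposal does not address; either supply Wei's arguments in full or cite them as the paper does.
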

\begin{proof}
(1)$\Rightarrow$(2): It follows from \cite[Proposition 4.2]{Wei1} that $\sigma^{\perp_{>0}}$ is closed for coproducts in $D(A)$. By definition, the smallest triangulated subcategory of $D(A)$ containing $Add(\sigma)$ contains $A$. Then the smallest triangulated subcategory of $D(A)$ closed under coproducts and containing  $\sigma$ is $D(A)$. It then follows from Remark \ref{generating conditions} that $D(A)$ is generated by $\{\sigma[i],i\in\mathbb{Z}\}$.

(2)$\Rightarrow$(3): The arguments are similar to those in the proof of \cite[Corollary 4.7]{AI}. The subcategory $\sigma^{\perp_{>0}}\cap D^{\leq 0}$ is suspended and, by assumption, closed for coproducts in $D(A)$. It follows from  Example \ref{example t-str}(4) that  $\sigma^{\perp_{>0}}\cap D^{\leq 0}$   contains  $aisle(\sigma)$.
For any $X$ in $\sigma^{\perp_{>0}}$, there is a triangle associated with the t-structure $(aisle(\sigma),\sigma^{\perp_{<0}})$
$$Y\rightarrow X\rightarrow Z\rightarrow Y[1],$$
with $Y$ in $aisle(\sigma)$ and $Z$ in $\sigma^{\perp_{\leq 0}}$. Since $Y$ then also lies in $ \sigma^{\perp_{>0}}$, and $X$ lies in $\sigma^{\perp_{>0}}$  by assumption, we conclude that $Z$ lies in $\sigma^{\perp_{>0}}$. But then $Z$ lies in $\sigma^{\perp_{\leq 0}}\cap\sigma^{\perp_{>0}}$, and so $Z=0$ by (2).  It follows that  $\sigma^{\perp_{>0}}=aisle(\sigma)$.

(3)$\Rightarrow$(4): Since $\sigma$ lies in $D^{\leq 0}$, then so does $aisle(\sigma)=\sigma^{\perp_{>0}}$.

(4)$\Rightarrow$(1): This follows from  \cite[Proposition 3.12]{Wei1}.
\end{proof}

\begin{remark}\label{compatible} Definitions \ref{small silting} and \ref{big silting} agree on complexes $\sigma\in K^b(proj(A))$. Indeed, for condition (1)  one uses that $\sigma$ is a compact object. This also implies that $\sigma^{\perp_{>0}}$ is closed for coproducts. The claim now follows from (1)$\Leftrightarrow$(2) in Proposition \ref{silting criterion}.
\end{remark}

We will now study the (co-)t-structures arising from silting complexes in some more detail.

\begin{definition}\label{silting}
\begin{enumerate}
\item A t-structure $(\Vcal^{\leq 0},\Vcal^{\geq 0})$ (respectively, a co-t-structure $(\Ucal_{\geq 0},\Ucal_{\leq 0})$) in $D(A)$ is said to be \textbf{intermediate} if there are $a,b\in\mathbb{Z}$, $a\leq b$, such that $D^{\leq a}\subseteq \Vcal^{\leq 0}\subseteq D^{\leq b} \: (\text{respectively, }\, D^{\leq a}\subseteq \Ucal_{\leq 0}\subseteq D^{\leq b}).$
\item A t-structure $(\Vcal^{\leq 0},\Vcal^{\geq 0})$ is said to be \textbf{silting} if it is intermediate and there is a silting complex $\sigma$ in $D(A)$ such that  $\Vcal^{\leq 0}\cap {}^{\perp_0}(\Vcal^{\leq 0}[1]) = Add(\sigma).$ It is furthermore said to be \textbf{$n$-silting} if $D^{\leq -n+1}\subseteq \Vcal^{\leq 0}\subseteq D^{\leq 0}$.
\end{enumerate}
\end{definition}

\begin{lemma}\label{silting t-str}
A t-structure $(\Vcal^{\leq 0},\Vcal^{\geq 0})$ is $n$-silting with $\Vcal^{\leq 0}\cap {}^{\perp_0}(\Vcal^{\leq 0}[1]) = Add(\sigma)$ if and only if $\sigma$ is an $n$-silting complex and $\Vcal^{\leq 0}=\sigma^{\perp_{>0}}$. 
\end{lemma}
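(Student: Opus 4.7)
The plan is to identify, for a silting complex $\sigma$, the t-structure $(\sigma^{\perp_{>0}},\sigma^{\perp_{<0}})$ furnished by Proposition~\ref{silting criterion} and Example~\ref{example t-str}(4) as the one under consideration, and then to match $Add(\sigma)$ with $\Vcal^{\leq 0}\cap{}^{\perp_0}(\Vcal^{\leq 0}[1])$ by a cone-and-split argument driven by the generation property of $\sigma$. The two implications then reduce to symmetric checks against this t-structure.

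\textbf{Backward direction.} Suppose $\sigma$ is $n$-silting and $\Vcal^{\leq 0}=\sigma^{\perp_{>0}}$. Proposition~\ref{silting criterion}(4) gives $\Vcal^{\leq 0}\subseteq D^{\leq 0}$. For $D^{\leq -n+1}\subseteq\Vcal^{\leq 0}$ I would exploit that $\sigma$ is a bounded complex of projectives concentrated in degrees $[-n+1,0]$, hence K-projective: for $X\in D^{\leq -n+1}$ and $i\geq 1$ the complex $X[i]$ is quasi-isomorphic to its smart truncation $\tau^{\leq -n}(X[i])$ (concentrated in degrees $\leq -n$), and any chain map from $\sigma$ to the latter vanishes for degree reasons. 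The inclusion $Add(\sigma)\subseteq\Vcal^{\leq 0}\cap{}^{\perp_0}(\Vcal^{\leq 0}[1])$ is routine: $\sigma\in\sigma^{\perp_{>0}}$ by presilting, and for $W\in\Vcal^{\leq 0}[1]$ we have $W[-1]\in\sigma^{\perp_{>0}}$, whence $Hom_{D(A)}(\sigma,W)=Hom_{D(A)}(\sigma,W[-1][1])=0$; the class is then closed under coproducts and summands. For the reverse inclusion, take $Y\in\Vcal^{\leq 0}\cap{}^{\perp_0}(\Vcal^{\leq 0}[1])$, form the universal map $\phi\colon\sigma^{(I)}\to Y$ with $I=Hom_{D(A)}(\sigma,Y)$, and complete to a triangle $\sigma^{(I)}\xrightarrow{\phi}Y\to C\to\sigma^{(I)}[1]$. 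Applying $Hom_{D(A)}(\sigma,-)$, together with presilting and $Y\in\sigma^{\perp_{>0}}$, gives $Hom_{D(A)}(\sigma,C[i])=0$ for all $i\geq 0$; in particular $C\in\Vcal^{\leq 0}$. The decisive move is to pass to $H:=\tau^{\geq 0}C$, which lies in the heart $\Vcal^{\leq 0}\cap\Vcal^{\geq 0}=\sigma^{\perp_{\neq 0}}$. The truncation triangle for $C$ has its other two terms in $\Vcal^{\leq -1}=\Vcal^{\leq 0}[1]$, so $\sigma\in{}^{\perp_0}(\Vcal^{\leq 0}[1])$ yields $Hom_{D(A)}(\sigma,H)\cong Hom_{D(A)}(\sigma,C)=0$; combined with $H\in\sigma^{\perp_{\neq 0}}$ this gives $Hom_{D(A)}(\sigma,H[i])=0$ for every $i\in\Zbb$, so the generation of $D(A)$ by $\{\sigma[i]\}_{i\in\Zbb}$ forces $H=0$. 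Thus $C\in\Vcal^{\leq 0}[1]$, $Hom_{D(A)}(Y,C)=0$, the triangle splits, and $Y\in Add(\sigma)$.

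\textbf{Forward direction and main obstacle.} Conversely, if the t-structure is $n$-silting with coheart $Add(\sigma)$, by definition there is an $n$-silting $\sigma^{\prime}$ with $Add(\sigma^{\prime})=Add(\sigma)$; presilting and the generation condition transfer through $Add$-equivalence, so $\sigma$ is itself an $n$-silting complex. The inclusion $\Vcal^{\leq 0}\subseteq\sigma^{\perp_{>0}}$ is immediate: for $Y\in\Vcal^{\leq 0}$ and $i\geq 1$, $Y[i]\in\Vcal^{\leq -i}\subseteq\Vcal^{\leq 0}[1]$, and $\sigma\in{}^{\perp_0}(\Vcal^{\leq 0}[1])$ does the rest. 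For the converse I would truncate $Y\in\sigma^{\perp_{>0}}$ via the t-structure as $\tau^{\leq 0}Y\to Y\to\tau^{\geq 1}Y\to\tau^{\leq 0}Y[1]$ and verify $Hom_{D(A)}(\sigma,\tau^{\geq 1}Y[k])=0$ for every $k\in\Zbb$: for $k\leq 0$ this follows from $\tau^{\geq 1}Y[k]\in\Vcal^{\geq 1}$ together with the t-structure orthogonality $Hom_{D(A)}(\Vcal^{\leq 0},\Vcal^{\geq 1})=0$, while for $k>0$ the long exact sequence combined with $Y\in\sigma^{\perp_{>0}}$ and $\sigma\in{}^{\perp_0}(\Vcal^{\leq 0}[1])$ handles it; generation then forces $\tau^{\geq 1}Y=0$, so $Y\in\Vcal^{\leq 0}$. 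The main obstacle is the inclusion $\Vcal^{\leq 0}\cap{}^{\perp_0}(\Vcal^{\leq 0}[1])\subseteq Add(\sigma)$ of the backward direction, where one must extract an actual splitting from purely Hom-theoretic data; the heart-truncation step that channels the generation of $\sigma$ into the conclusion $C\in\Vcal^{\leq 0}[1]$ is the delicate move on which everything rests.
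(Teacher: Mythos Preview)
Your overall strategy matches the paper's, and both directions are argued with the right tools (universal maps, truncation triangles, generation). However, there is one genuine gap in the forward direction, and a few technical differences worth flagging.

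\textbf{The gap.} You write ``by definition there is an $n$-silting $\sigma'$ with $Add(\sigma')=Add(\sigma)$''. This misreads Definition~\ref{silting}: an $n$-silting t-structure is a silting t-structure with the bounds $D^{\leq -n+1}\subseteq\Vcal^{\leq 0}\subseteq D^{\leq 0}$, and the definition of \emph{silting} t-structure only furnishes some silting complex $\sigma'$, with no control on its length. So your transfer of presilting and generation through $Add$-equivalence shows only that $\sigma$ is silting, not that it is an $n$-term complex. The paper supplies the missing step: from $D^{\leq -n+1}\subseteq\Vcal^{\leq 0}$ and $\sigma\in{}^{\perp_0}(\Vcal^{\leq 0}[1])$ one gets $Hom_{D(A)}(\sigma,D^{\leq -n})=0$; then, viewing $\sigma$ as a bounded complex of projectives in $K_p(A)$ and applying the stupid truncation at degree $-n+1$, one obtains a triangle $Y\to\sigma\to Z\to Y[1]$ with $Y\in K_{\geq -n+1}\cap K_{\leq 0}$ and $Z\in K_{\leq -n}\subseteq D^{\leq -n}$, so $\sigma\to Z$ vanishes and $\sigma$ is a summand of $Y$, hence $n$-term.

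\textbf{Technical differences.} For the hard inclusion $\Vcal^{\leq 0}\cap{}^{\perp_0}(\Vcal^{\leq 0}[1])\subseteq Add(\sigma)$ in the backward direction, the paper completes the universal map to a triangle $K\to\sigma^{(I)}\to X\to K[1]$ and checks directly from the long exact sequence that $K\in\sigma^{\perp_{>0}}$; then $X\in{}^{\perp_0}(\sigma^{\perp_{>0}}[1])$ kills the connecting map $X\to K[1]$ immediately. Your cofiber-plus-heart-truncation argument is correct, but the paper's fiber approach reaches the split in one step without invoking the heart. For $\Vcal^{\leq 0}=\sigma^{\perp_{>0}}$ in the forward direction, the paper works on the coaisle (showing $\sigma^{\perp_{<0}}\subseteq\Vcal^{\geq 0}$ by truncating $X\in\sigma^{\perp_{<0}}$ and annihilating its $\Vcal^{\leq -1}$-part via generation), while you work on the aisle side; both are valid dual arguments.
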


\begin{proof}
Suppose that $(\Vcal^{\leq 0},\Vcal^{\geq 0})$ is an $n$-silting t-structure with $\Vcal^{\leq 0}\cap {}^{\perp_0}(\Vcal^{\leq 0}[1]) = Add(\sigma)$. It is clear that $\sigma$ is a silting complex (since it has the same additive closure as a silting complex). From Proposition \ref{silting criterion} we have that $\sigma^{\perp_{>0}}=aisle(\sigma)$ and, hence, $\sigma^{\perp_{>0}}$ is contained in $\Vcal^{\leq 0}$ as so is $\sigma$. It remains to see that $\Vcal^{\leq 0}\subseteq\sigma^{\perp_{>0}}$. 
By the orthogonality relations of  t-structures, it is enough to prove that $\sigma^{\perp_{<0}}$ is contained in $\Vcal^{\geq 0}$. Let $X$ lie in $\sigma^{\perp_{<0}}$ and consider the canonical triangle associated with the t-structure $(\Vcal^{\leq 0},\Vcal^{\geq 0})$
$$Y\rightarrow X\rightarrow Z\rightarrow Y[1],$$
where $Y$ lies in $\Vcal^{\leq -1}$ and $Z$ lies in $\Vcal^{\geq 0}$. By assumption, we have that $Hom_{D(A)}(\sigma,X[i])=0$ for all $i<0$ and, since $\sigma$ lies in $\Vcal^{\leq 0}$, we also have that $Hom_{D(A)}(\sigma,Z[i])=0$ for all $i<0$. Thus, we have that $Hom_{D(A)}(\sigma,Y[i])=0$ for all $i<0$. On the other hand, since $\sigma$ lies in ${}^{\perp_0}(\Vcal^{\leq 0}[1])={}^{\perp_0}(\Vcal^{\leq -1})$ we see that $Hom_{D(A)}(\sigma,Y[i])=0$ for all $i\geq 0$. Recalling from Proposition \ref{silting criterion} that $\{\sigma[i]:i\in\Zbb\}$ is a set of generators for $D(A)$, we conclude that $Y=0$. Thus $X\cong Z$ and $X$ lies in $\Vcal^{\geq 0}$ as wanted. 

It remains to show that $\sigma$ is an $n$-term complex. Let $\sigma$ be a complex of projective $A$-modules of the form $(P_i,d_i)_{i\in\Zbb}$ with $P_i=0$ for all $i>0$. Since the t-structure is $n$-silting, $D^{\leq -n+1}$ lies in $\Vcal^{\leq 0}$ and  $\sigma$ lies in ${}^{\perp_0}(\Vcal^{\leq 0}[1])$, so $Hom_{D(A)}(\sigma,D^{\leq -n})=0$. Consider now the canonical co-t-structure $(K_{\geq 0},K_{\leq 0})$ in $K_p(A)$ from Example \ref{example t-str}(2), and take a   triangle given by  stupid truncations, that is, 
$$\xymatrix{Y \ar[r] & \sigma \ar[r]^{u\ \ } & Z\ar[r] & Y[1],}$$
with $Y$ in $K_{\geq -n+1}\cap K_{\leq 0}$ and $Z$ in $K_{\leq -n}$. Since $Z$ lies in $D^{\leq -n}$, the  map $u$ is zero and, thus, $\sigma$ lies in $K_{\geq -n+1}\cap K_{\leq 0}$ because it is a summand of $Y$ (in fact, it is easy to check that we even have $Y\cong \sigma$).

Conversely, let $\sigma$ be an $n$-silting complex. Then it is easy to see that $D^{\leq -n+1}\subseteq \sigma^{\perp_{>0}}$, and from Proposition \ref{silting criterion} we have that $\sigma^{\perp_{>0}}\subseteq D^{\leq 0}$. Moreover, clearly we have $Add(\sigma)\subseteq \sigma^{\perp_{>0}}\cap {}^{\perp_0}(\sigma^{\perp_{>0}}[1])$. We now show the reverse inclusion. Let $X$ lie in $\sigma^{\perp_{>0}}\cap {}^{\perp_0}(\sigma^{\perp_{>0}}[1])$ and let $I$ be the set $Hom_{D(A)}(\sigma, X)$. The canonical universal map $u:\sigma^{(I)}\rightarrow X$ gives rise to a triangle
$$\xymatrix{K\ar[r]& \sigma^{(I)}\ar[r]^u& X\ar[r]^v& K[1].}$$
Applying the functor $Hom_{D(A)}(\sigma,-)$ to the triangle, since $Hom_{D(A)}(\sigma,\sigma^{(I)}[1])=0$ and $Hom_{D(A)}(\sigma, u)$ is surjective, 
we deduce that $Hom_{D(A)}(\sigma,K[1])=0$. For $i>0$, since 
$$Hom_{D(A)}(\sigma, \sigma^{(I)}[i+1])=0=Hom_{D(A)}(\sigma,X[i])$$
we also conclude that $Hom_{D(A)}(\sigma,K[i+1])=0$. Thus $K$ lies in $\sigma^{\perp_{>0}}$. Since $X$ lies in ${}^{\perp_0}(\sigma^{\perp_{>0}}[1])$, we infer that  $v\in Hom_{D(A)}(X,K[1])$  is zero. Therefore, $u$ splits and $X$ lies in $Add(\sigma)$ as wanted.
Thus $(\sigma^{\perp_{>0}},\sigma^{\perp_{<0}})$ is an $n$-silting t-structure.
\end{proof}

It follows from the lemma that two silting complexes $\sigma$ and $\gamma$ in $D(A)$ satisfy $Add(\sigma)=Add(\gamma)$ if and only if $\sigma^{\perp_{>0}}=\gamma^{\perp_{>0}}$. Therefore we can define, unambiguously, a notion of equivalence of silting complexes: two silting complexes $\sigma$ and $\gamma$ are said to be \textbf{equivalent} if $Add(\sigma)=Add(\gamma)$.  

\smallskip

The following theorem generalises the correspondence of (compact) silting complexes with t-structures and co-t-structures  in \cite{AI,KY}. It has been partly treated in \cite[Theorem 5.3]{Wei1} and in \cite[Corollary 5.9]{MSSS}.

\begin{theorem}\label{bijections general}
There are bijections between
\begin{enumerate}
\item equivalence classes of silting complexes in $D(A)$;
\item silting t-structures in $D(A)$;
\item intermediate co-t-structures $(\Ucal_{\geq 0},\Ucal_{\leq 0})$ in $D(A)$ with $\Ucal_{\leq 0}$  closed for coproducts in $D(A)$;
\item \cite[Corollary 5.9]{MSSS} bounded co-t-structures in $K^b(Proj(A))$.
\end{enumerate}
\end{theorem}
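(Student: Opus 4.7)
The plan is to organise all four classes around the silting complex itself: to a silting $\sigma$ I will assign simultaneously the t-structure $(\sigma^{\perp_{>0}}, \sigma^{\perp_{<0}})$, an intermediate co-t-structure $(\Ucal_{\geq 0}, \Ucal_{\leq 0})$ in $D(A)$ with $\Ucal_{\leq 0} = \sigma^{\perp_{>0}}$, and the bounded co-t-structure in $K^b(Proj(A))$ generated by $Add(\sigma)$. In each case the silting complex will be recovered, up to equivalence, as the appropriate ``co-heart''.

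For (1)$\leftrightarrow$(2), all the real work is already contained in Lemma \ref{silting t-str}. Viewing $\sigma$ as an $n$-term complex for some $n$, the lemma shows that $(\sigma^{\perp_{>0}}, \sigma^{\perp_{<0}})$ is an $n$-silting, hence silting, t-structure, with $Add(\sigma)$ recovered as $\sigma^{\perp_{>0}} \cap {}^{\perp_0}(\sigma^{\perp_{>0}}[1])$. Conversely, a silting t-structure is by definition built from a silting complex $\sigma$, unique up to $Add$, and the same lemma implies that the t-structure associated to $\sigma$ in turn coincides with the given one. Equivalence of silting complexes was identified immediately after Lemma \ref{silting t-str} with equality of the associated aisles, so both round trips are the identity on equivalence classes. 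The bijection (1)$\leftrightarrow$(4) is \cite[Corollary 5.9]{MSSS}, applied directly to silting complexes in $K^b(Proj(A))$.

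The new content is (1)$\leftrightarrow$(3). Given a silting $\sigma$, I will set $\Ucal_{\leq 0} := \sigma^{\perp_{>0}}$, which is coproduct-closed by Proposition \ref{silting criterion} (combining parts (2) and (4)), and take $\Ucal_{\geq 0}$ to be the closure of $Add(\sigma)$ under extensions, direct summands, coproducts and $[-1]$-shifts. Intermediacy then follows from Proposition \ref{silting criterion}(4) together with the boundedness of $\sigma$, while the orthogonality and shift-closure axioms of a co-t-structure are built into the construction. The defining triangles will be produced by iteratively taking cones of universal maps $\sigma^{(I_i)}[-i] \to X_i$, exactly as in the second half of the proof of Lemma \ref{silting t-str}; boundedness of $\sigma$ ensures that the truncation process terminates inside $\Ucal_{\leq 0}$. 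In the converse direction, I will extract $\sigma$ from the co-heart $\Ucal_{\geq 0} \cap \Ucal_{\leq 0}$ and apply the canonical co-t-structure triangle to $A$ to realise a representative of $\sigma$ as a bounded complex of projective modules; that $\sigma$ is silting will then be verified using Proposition \ref{silting criterion}.

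The hard part will be the construction of the defining co-t-structure triangles for (1)$\leftrightarrow$(3): outside the compact setting we cannot truncate stupidly in $K^b(Proj(A))$ as in \cite{MSSS}, and we must instead combine the coproduct-closure of $\Ucal_{\leq 0}$, the generation property of Proposition \ref{silting criterion}(2), and a transfinite iteration of universal-map constructions in the spirit of \cite[Theorem 5.3]{Wei1}.
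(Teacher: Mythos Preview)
Your treatment of (1)$\leftrightarrow$(2) matches the paper, and citing \cite{MSSS} for (1)$\leftrightarrow$(4) is essentially what the paper does as well (though the paper also checks that the assignment in \cite{MSSS} agrees with $\Omega(\sigma) = ({}^{\perp_0}(\sigma^{\perp_{>0}}[1])\cap K^b(Proj(A)),\, \sigma^{\perp_{>0}}\cap K^b(Proj(A)))$).

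For (1)$\leftrightarrow$(3), however, there is a genuine gap. Your iterative scheme cannot produce the defining triangles for arbitrary $X\in D(A)$: if $X$ has cohomology in arbitrarily high positive degrees, no finite (or even transfinite) sequence of cones of maps from objects of the form $\sigma^{(I_i)}[-i]$ will land in $\Ucal_{\leq 0}[1]$, since those approximating objects all live in $D^{\leq 0}$. Your remark that ``we cannot truncate stupidly in $K^b(Proj(A))$'' is precisely where you miss the key step: one \emph{can} truncate stupidly in $K_p(A)$, and this is exactly what the paper does. Concretely, the paper first cites \cite[Theorem 5.3]{Wei1} to obtain the bijection between silting complexes and the desired co-t-structures restricted to $D^-(A)$, after reformulating Wei's ``finite coresolution'' condition as intermediacy. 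Then, for arbitrary $X\in D(A)\cong K_p(A)$, one applies the standard co-t-structure of Example \ref{example t-str}(2) to get a triangle $Y\to X\to Z$ with $Y\in K_{\geq 1}\subseteq {}^{\perp_0}(\Ucal_{\leq 0}[1])$ and $Z\in K_{\leq 0}\subseteq D^-(A)$; one then applies the $D^-$-co-t-structure to $Z$ and glues via the octahedral axiom. Without this preliminary reduction your iteration has no starting point.

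Even for $X\in D^{\leq 0}$, your description of the iteration is not quite right: if you take the universal map $\sigma^{(I_0)}\to X$ and pass to the cone $X_1$, then $Hom_{D(A)}(\sigma,X_1[k])\cong Hom_{D(A)}(\sigma,X[k])$ for all $k>0$, so you have made no progress toward $\sigma^{\perp_{>0}}$. The iteration that \emph{does} terminate kills the top degree first: one needs to start with the universal map $\sigma^{(J)}\to X[n-1]$ (where $\sigma$ is $n$-term) and work downward. This is implicit in Wei's argument but is not what you wrote.

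Finally, for (3)$\Rightarrow$(1) your proposal is too thin. You have not explained why the co-heart $\Ucal_{\geq 0}\cap\Ucal_{\leq 0}$ is of the form $Add(\sigma)$ for a single object, nor why applying the co-t-structure triangle to $A$ produces a \emph{bounded} complex of projectives satisfying the silting conditions. The paper avoids this by appealing to the inverse map already provided by Wei's bijection, rather than constructing $\sigma$ by hand from an abstract intermediate co-t-structure.
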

\begin{proof}
Consider the following assignments.
$$\begin{array}{ccccl}
\text{Bijection} &&&& \text{Assignment}\\
\hline
 (1)\rightarrow (2) &&&& \Psi:\;\sigma\,\mapsto\; (\sigma^{\perp_{>0}},\sigma^{\perp_{<0}})\\
 (2)\rightarrow (1) &&&& \Theta:\;(\Vcal^{\leq 0},\Vcal^{\geq 0})\,\mapsto\; \sigma\text{ with } Add(\sigma)=\Vcal^{\leq 0}\cap {}^{\perp_0}(\Vcal^{\leq 0}[1])\\
 (1)\rightarrow (3) &&&&  \Phi:\;\sigma\,\mapsto\; ({}^{\perp_0}(\sigma^{\perp_{>0}}[1]),\sigma^{\perp_{>0}})\\
 (1)\rightarrow (4) &&&& \Omega :\;\sigma\,\mapsto\; ({}^{\perp_0}(\sigma^{\perp_{>0}}[1])\cap K^b(Proj(A)),\sigma^{\perp_{>0}}\cap K^b(Proj(A)))
\end{array}$$

We have seen above that the assignments $\Psi$, $\Phi$ and $\Omega$ do not depend on the representative of the equivalence class of the silting complex $\sigma$. Note also that these assignments commute with the shift functor [1], which is an auto-equivalence of the derived category. To show that $\Psi$, $\Phi$ and $\Theta$ are bijections, we will assume without loss of generality that silting complexes are concentrated in degrees less or equal than $0$ or that $\sigma^{\perp_{>0}}$ is contained in $D^{\leq 0}$. 
It follows immediately from Lemma \ref{silting t-str} that the assignments $\Psi$ and $\Theta$ are inverse to each other.

We prove that $\Phi$ is a bijection in two steps. The first step will provide a bijection between (1) and certain co-t-structures in $D^-(A)$, and the second step will relate them to the co-t-structures in (2).

\underline{Step 1:}
In \cite[Theorem 5.3]{Wei1} it is shown that assigning to a silting complex $\sigma$ the subcategory $\sigma^{\perp_{>0}}$ yields a bijection between  equivalence classes of silting complexes and subcategories $\mathcal U$ of $D^-(A)$ satisfying four properties. 
We leave to the reader to check that two of those properties (being specially covariantly finite and coresolving, as defined in \cite{Wei1}) correspond exactly to the statement that $({}^{\perp_0}(\mathcal U[1]),\mathcal U)$ is a co-t-structure in $D^-(A)$. Notice that here the left orthogonal is computed in $D^-(A)$. 
A third property states that $\mathcal U$ is closed for coproducts. 

We turn to the fourth property. It asserts that  
every object $X$ in $D^-(A)$ admits a finite coresolution  by $\Ucal$, i.e. there are a positive integer $m$, a collection of objects $(U_i)_{0\leq i\leq m}$ in $\Ucal$, and a finite sequence of triangles as follows
$$\begin{array}{c} X\rightarrow U_0\rightarrow C_0\rightarrow X[1]\\ C_0\rightarrow U_1\rightarrow C_1\rightarrow C_0[1]\\ ... \\ C_{m-2}\rightarrow U_{m-1}\rightarrow U_{m}\rightarrow C_m[1]. 
\end{array}$$
We now prove that this property can be rephrased by saying that the co-t-structure $({}^{\perp_0}(\mathcal U[1]),\mathcal U)$ in $D^-(A)$ is  intermediate. In fact, the classes $\Ucal$ occurring in \cite[Theorem 5.3]{Wei1} satisfy this condition by 
 \cite[Lemma 4.1]{Wei1}.
Conversely,  given a co-t-structure  $(\Ucal_{\geq 0},\Ucal_{\leq 0})$  in $D^-(A)$ such that $D^{\leq -n}\subseteq \Ucal_{\leq 0}\subseteq D^{\leq 0}$ for some  $n$,  we 
take  a complex $X$ in $D^-(A)$, say with $H^i(X)=0$ for all $i>k$, and construct a sequence of triangles as above. Let us first reduce this analysis to the case where $X$ lies in $D^{\leq 0}$. Indeed, by the axioms of co-t-structure we have a triangle
\begin{equation}\nonumber X\rightarrow U_0\rightarrow C_0\rightarrow X[1]
\end{equation}
such that $U_0$ lies in $\Ucal_{\leq 0}$ and $C_0$ lies in $\Ucal_{\geq 0}$. Using that $\Ucal_{\leq 0}\subseteq D^{\leq 0}$, we see that $H^i(C_0)=0$ for all $i>k-1$. So we can find a finite sequence of triangles yielding an object $C_{k-1}$ in $D^{\leq 0}$. Therefore, without loss of generality, we may assume that we start with $X$ in $D^{\leq 0}$. 
We now build a sequence of triangles
$$\begin{array}{c} X\rightarrow U_0\rightarrow C_0\rightarrow X[1]\\ C_0\rightarrow U_1\rightarrow C_1\rightarrow C_0[1]\\ ... \\ C_{n-1}\rightarrow U_{n}\rightarrow C_{n}\rightarrow C_{n-1}[1] \end{array}$$
where $U_i$ lies in $\Ucal_{\leq 0}$ and $C_i$ lies in $\Ucal_{\geq 0}$ for all $0\leq i\leq n$. Here $n$ is the natural number above with $D^{\leq -n}\subseteq \Ucal_{\leq 0}$.
We claim that  $Hom_{D(A)}(C_n,C_{n-1}[1])=0$. This will show that the last triangle splits, so $C_{n-1}$ will belong to  $\Ucal_{\leq 0}$ as wished.
To prove this claim, we apply the functor $Hom_{D(A)}(C_n,-)$ to all the triangles. From the orthogonality properties of the co-t-structure we infer that, for all $1\leq i\leq n$,
$$Hom_{D(A)}(C_n, C_{n-1}[1])\cong Hom_{D(A)}(C_n, C_{n-i}[i]).$$
From the first triangle we get an isomorphism $$Hom_{D(A)}(C_n,C_0[n])\cong Hom_{D(A)}(C_n,X[n+1]).$$ But $C_n$ lies in $\Ucal_{\geq 0}={}^{\perp_0}(\Ucal_{\leq 0}[1])$, and $X[n+1]$ lies in $D^{\leq -n-1}=D^{\leq -n}[1]\subseteq \Ucal_{\leq 0}[1]$. So we conclude that $Hom_{D(A)}(C_n,X[n+1])=0$, which proves our claim.

\underline{Step 2.}
We have shown that $\Phi$ defines a bijection between equivalence classes of silting complexes in $D(A)$ and intermediate co-t-structures $(\Ucal_{\geq 0},\Ucal_{\leq 0})$ in $D^-(A)$ such that $\Ucal_{\leq 0}$ is closed for coproducts in $D(A)$.
 It remains to prove that such co-t-structures in $D^-(A)$  and  the corresponding co-t-structures in $D(A)$ are in bijection.
To this end, we prove that for such a co-t-structure $(\Ucal_{\geq 0},\Ucal_{\leq 0})$ in $D^-(A)$, the pair 
$({}^{\perp_0}(\Ucal_{\leq 0}[1]), \Ucal_{\leq 0})$ in $D(A)$ (now with the orthogonal computed in $D(A)$) is an intermediate co-t-structure in $D(A)$. Then we immediately obtain an injective assignment with an obvious inverse given by the intersection with $D^-(A)$,  completing our proof. 

We only have to verify axiom (3) in the definition (\ref{t-def}) of a co-t-structure for the pair $({}^{\perp_0}(\Ucal_{\leq 0}[1]), \Ucal_{\leq 0})$ in $D(A)$. We use the equivalence between  $D(A)$ and $K_p(A)$ and consider the standard co-t-structure 
$(K_{\geq 0},K_{\leq 0})$  in $K_p(A)$ from Example \ref{example t-str}(2). For any $X$ in $K_p(A)$, using stupid truncation, there is a triangle 
$$\xymatrix{Y\ar[r]&X\ar[r]^\psi&Z\ar[r]&Y[1]}$$
where $Y$ in $K_{\geq 1}$ and $Z$ in $K_{\leq 0}$.
Now, $Z$ lies in $D^-(A)$ and, thus, there is a triangle
$$\xymatrix{C[-1]\ar[r]& Z\ar[r]^\theta& U\ar[r]& C}$$
with $U$ in $\Ucal_{\leq 0}$ and $C$ in $\Ucal_{\geq 0}\subset{}^{\perp_0}(\Ucal_{\leq 0}[1])$. Using the octahedral axiom, one can check that there is a triangle
$$\xymatrix{Y[1]\ar[r]& Cone(\theta\psi)\ar[r] & C\ar[r]& Y[2].}$$
Since $Y[1]$ lies in $K_{\geq 0}$ and homotopically projective resolutions of complexes in $\Ucal_{\leq 0}[1]$ lie in $K_{\leq -1}$, we have that $Y[1]$ lies in ${}^{\perp_0}(\Ucal_{\leq 0}[1])$. Since $C$ also lies in ${}^{\perp_0}(\Ucal_{\leq 0}[1])$,  so does $Cone(\theta\psi)$, thus yielding a co-t-structure triangle
$$\xymatrix{Cone(\theta\psi)[-1]\ar[r]& X\ar[r]^{\theta\psi}& U\ar[r]& Cone(\theta\psi)}$$
with $Cone(\theta\psi)$ in ${}^{\perp_0}(\Ucal_{\leq 0}[1])$ and $U$ in $\Ucal_{\leq 0}$, as wanted.

It remains to see that $\Omega$ is a bijection. From \cite[Corollary 5.9]{MSSS} there is a bijection between equivalence classes of silting complexes in $D(A)$ and bounded co-t-structures in $K^b(Proj(A))$. It associates to a silting complex $\sigma$  the pair $({}^{\perp_0}(\Ucal_\sigma[1]),\Ucal_\sigma)$ in $K^b(Proj(A))$, where $\Ucal_\sigma$ is the smallest suspended subcategory of $K^b(Proj(A))$ containing $Add(\sigma)$. Its inverse is given by considering an additive generator of the intersection of the pair of subcategories. We only need to check that $\Ucal_\sigma=\sigma^{\perp_{>0}}\cap K^b(Proj(A))$. First we observe that the intermediate co-t-structure $({}^{\perp_0}(\sigma^{\perp_{>0}}[1]),\sigma^{\perp_{>0}})$ in $D(A)\cong K_p(A)$ restricts to $K^b(Proj(A))$. Indeed, for a complex  $X$ in $K^b(Proj(A))$, consider a co-t-structure triangle in $K_p(A)$ with respect to $({}^{\perp_0}(\sigma^{\perp_{>0}}[1]),\sigma^{\perp_{>0}})$, say
$$C[-1]\rightarrow X\rightarrow U\rightarrow C,$$
with $U$ in $\sigma^{\perp_{>0}}$ and $C$ in ${}^{\perp_0}(\sigma^{\perp_{>0}}[1])$. Then $U$ is a right bounded complex in $K_p(A)$ and one can easily check that $C$ is left bounded in $K_p(A)$. Therefore, they are all complexes in $K^b(Proj(A))$. Clearly, this restricted co-t-structure corresponds to the complex $\sigma$ under the bijection defined in \cite[Corollary 5.9]{MSSS} and, thus, $\Ucal_\sigma$ and $\sigma^{\perp_{>0}}\cap K^b(Proj(A))$ must coincide.
\end{proof}

\begin{remark}
(1)  We clarify in some more detail how the result above generalises Theorem \ref{KY correspondence} for compact silting complexes over a finite dimensional $\Kbb$-algebra $\Lambda$. In \cite{KY}, it is shown that if $\sigma$ is compact, then $(\sigma^{\perp_{>0}}\cap D^b(mod(\Lambda)),\sigma^{\perp_{<0}}\cap D^b(mod(\Lambda)))$ is a t-structure in $D^b(mod(\Lambda))$. Adopting the notation in  the proof of Theorem \ref{bijections general}, this corresponds to the restriction of the t-structure $\Psi(\sigma)$  in $D(\Lambda)$ to $D^b(\Lambda)$. Indeed, it can be checked that such a restriction must be bounded (because $\Psi(\sigma)$ is intermediate) and that the heart is a module category (the zero cohomology of $\sigma$ with respect to the t-structure is a projective generator of the heart and it is small because $\sigma$ is compact). Moreover, the co-t-structure associated to $\sigma$ under Theorem \ref{KY correspondence} can be checked (using the description provided in \cite{MSSS}) to coincide with the restriction  of $\Omega(\sigma)$ to $K^b(proj(\Lambda))$. It is, therefore, also the restriction of $\Phi(\sigma)$ to $K^b(proj(\Lambda))$. Again, this restriction will be bounded because the co-t-structure is intermediate.

\smallskip

(2) Notice that $\Psi(\sigma)$ is a t-structure which is always right adjacent to the co-t-structure $\Phi(\sigma)$, see  \cite[Definition 4.4.1]{Bo}. Compare with \cite{KY}.
\end{remark}

\subsection{2-silting complexes and silting modules}

The following lemma establishes a useful connection between $\sigma^{\perp_{>0}}$ and the torsion class $\mathscr{D}_\sigma$ introduced in Subection \ref{subsection silting}, for any 2-term complex $\sigma$ in $K^b(Proj(A))$.

\begin{lemma}\label{lie in orth}
The following hold for a 2-term complex $\sigma$ in $K^b(Proj(A))$ with $T=H^0(\sigma)$.  
\begin{enumerate}
\item An object $X$  in $D^{\leq 0}$ belongs to $\sigma^{\perp_{>0}}$ if and only if $H^0(X)$ lies in $\mathscr{D}_\sigma$. Moreover, $\mathscr{D}_\sigma=\sigma^{\perp_{>0}}\cap Mod(A)$.
\item An object $X$  in $D^{\geq 0}$ belongs to  $\sigma^{\perp_{\leq 0}}$ if and only if $H^0(X)$ lies in $T^\circ$. Moreover, $T^\circ=\sigma^{\perp_{\leq 0}}\cap Mod(A).$ 
\item  The module $T$ is partial silting with respect to $\sigma$ if and only if the complex $\sigma$ is presilting and $\sigma^{\perp_{>0}}\cap D^{\leq 0}$ is closed for coproducts in $D(A)$.
\end{enumerate}
\end{lemma}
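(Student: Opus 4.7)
The plan exploits two features: $\sigma$ is a bounded complex of projectives, hence K-projective, so Homs in $D(A)$ may be computed in $K(A)$; and $\sigma$ is concentrated in degrees $-1, 0$, so Homs against complexes supported outside an interval around these degrees vanish for combinatorial reasons. For (1), I would first reduce to modules via the truncation triangle
\[
\tau^{\leq -1} X \longrightarrow X \longrightarrow H^0(X) \longrightarrow \tau^{\leq -1} X[1].
\]
For $i \geq 1$ the object $\tau^{\leq -1} X[i]$ lies in $D^{\leq -2}$ and may be represented by a complex concentrated in degrees $\leq -2$; no chain map from $\sigma$ to such a complex is nonzero, killing the outer terms in the long exact sequence and yielding $Hom_{D(A)}(\sigma, X[i]) \cong Hom_{D(A)}(\sigma, H^0(X)[i])$. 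A direct computation in $K(A)$ then finishes the job: for a module $M$, $Hom_{K(A)}(\sigma, M[i])$ vanishes for $i \geq 2$ by degree reasons, while for $i=1$ it equals $\Coker(Hom_A(\sigma, M))$, which is zero precisely when $M \in \mathscr{D}_\sigma$.

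Part (2) proceeds in mirror fashion with the dual truncation triangle $H^0(X) \to X \to \tau^{\geq 1} X \to H^0(X)[1]$. For $i \leq 0$, the object $\tau^{\geq 1} X[i]$ sits in $D^{\geq 1}$, hence admits a representative supported in positive degrees, killing the outer Homs. For a module $M$ and $i \leq 0$, a chain-level inspection shows $Hom_{K(A)}(\sigma, M[i]) = 0$ for $i < 0$ (the target is concentrated in a strictly positive degree, where $\sigma$ has no components), while for $i = 0$ any chain map $\sigma \to M$ factors through the cokernel $T$, giving $Hom_{D(A)}(\sigma, M) \cong Hom_A(T, M)$. So $X \in \sigma^{\perp_{\leq 0}}$ reduces to $Hom_A(T, H^0(X)) = 0$, i.e.\ $H^0(X) \in T^\circ$.

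Part (3) is then bookkeeping based on (1) and Remark~\ref{a few observations}(2), which ensures $\mathscr{D}_\sigma$ is always closed under epimorphic images and extensions, so being a torsion class is equivalent to being closed under coproducts. If $T$ is partial silting, then $T^{(I)} \in \mathscr{D}_\sigma$ for every set $I$, and applying (1) to $\sigma^{(I)} \in D^{\leq 0}$ with $H^0(\sigma^{(I)}) = T^{(I)}$ yields $\sigma^{(I)} \in \sigma^{\perp_{>0}}$, so $\sigma$ is presilting; closure of $\sigma^{\perp_{>0}} \cap D^{\leq 0}$ under coproducts then follows directly from (1) and closure of $\mathscr{D}_\sigma$. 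For the converse, $\sigma$ presilting gives $\sigma \in \sigma^{\perp_{>0}} \cap D^{\leq 0}$, so by (1) $T = H^0(\sigma) \in \mathscr{D}_\sigma$, and coproduct-closure of $\mathscr{D}_\sigma$ is inherited from the hypothesis via (1) again.

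The main subtlety I expect is the careful passage between $D(A)$ and $K(A)$ when asserting that Homs from $\sigma$ to complexes in $D^{\leq -2}$ or $D^{\geq 1}$ vanish: one must choose a quasi-isomorphic representative with appropriate support and invoke K-projectivity of $\sigma$, which is where the two-term hypothesis is crucial. Everything else is a short diagram chase or explicit cochain computation.
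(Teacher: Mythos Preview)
Your proposal is correct and matches the paper's approach closely. The only organizational difference is in part (1): the paper works directly at the chain level with the complex $X$ (lifting a map $P_{-1}\to H^0(X)$ to $P_{-1}\to X_0$ and analysing the resulting null-homotopy), whereas you first reduce to the module case via the truncation triangle and then do the cochain computation for $H^0(X)$ alone; for parts (2) and (3) your argument coincides with the paper's.
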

\begin{proof}
We set $\sigma:P_{-1}\rightarrow P_0$. 

(1) Let $X=(X_j,d_j)_{j\in\Zbb}$ be a complex in $D^{\leq 0}$ (assume without loss of generality that $X_j=0$ for all $j>0$). Suppose that $X$ lies in $\sigma^{\perp_{>0}}$. Any map from $h:P_{-1}\rightarrow H^0(X)$ lifts to a map $f:P_{-1}\rightarrow X_0$ via the projection map $\pi:X_0\rightarrow H^0(X)$ since $P_{-1}$ is projective. Now, $f$ induces a map in $Hom_{K(A)}(\sigma,X[1])$ which we assume to be zero. Thus, there are maps $s_{0}:P_0\rightarrow X_0$ and $s_{-1}:P_{-1}\rightarrow X_{-1}$ such that $f=s_0\sigma+d_{{-1}}s_{-1}$. Since $h=\pi f$, we easily see that $h=(\pi s_0)\sigma$ and, thus, $H^0(X)$ lies in $\mathscr{D}_\sigma$.

Conversely, suppose that $H^0(X)$ lies in $\mathscr{D}_\sigma$. Then, for a morphism in $Hom_{K(A)}(\sigma,X[1])$ defined by a map $f:P_{-1}\rightarrow X_0$, there is $h:P_0\rightarrow H^0(X)$ such that $\pi f = h\sigma$. Since $P_0$ is projective,  there is $s_0:P_0\rightarrow X_0$ such that $\pi s_0=h$. It is then easy to observe that  there is $s_{-1}:P_{-1}\rightarrow X_{-1}$ such that $f-s_0\sigma=d_{-1}s_{-1}$, showing that $f$ is null-homotopic.

(2) Let $X$ be an object in $\sigma^{\perp_{\leq 0}}\cap D^{\geq 0}$. Since $X$ lies in $D^{\geq 0}$, we have a (standard) t-structure triangle of the form
\begin{equation}\nonumber
(\tau^{\geq 1}X)[-1]\rightarrow H^0(X) \rightarrow X \rightarrow \tau^{\geq 1}X.
\end{equation}
Since $Hom_{D(A)}(\sigma,(\tau^{\geq 1}X)[-1])=0=Hom_{D(A)}(\sigma,X)$, we get that $Hom_{D(A)}(\sigma,H^0(X))=0$ and, thus, $H^0(X)$ lies in $T^\circ$. Similarly one proves the converse. 

(3) First we claim that $\sigma^{\perp_{>0}}\cap D^{\leq 0}$ is closed for coproducts if and only if $\mathscr{D}_\sigma$ is closed for coproducts, i.e. condition (S1) in the definition of partial silting module holds for $T$. Indeed, consider the canonical triangle $$\tau^{\leq -1}\bigoplus_{i\in I}X_i\longrightarrow \bigoplus_{i\in I}X_i\longrightarrow H^0(\bigoplus_{i\in I}X_i)\longrightarrow (\tau^{\leq -1}\bigoplus_{i\in I}X_i)[1],$$ for any family of objects $(X_i)_{i\in I}$ in $\sigma^{\perp_{>0}}\cap D^{\leq 0}$. Since $D^{\leq -1}$ is contained in $\sigma^{\perp_{>0}}\cap D^{\leq 0}$ and $H^0$ commutes with coproducts, our claim follows from (1).
Condition (S2) is equivalent to $\sigma$ lying in $\sigma^{\perp_{>0}}$ by Lemma \ref{ext-orthogonality1}(3).
\end{proof}

The following theorem  is a non-compact version
of \cite[Theorem 2.10]{HKM}, in the sense that it extends the statements from compact silting complexes to silting complexes in $K^b(Proj(A))$.

\begin{theorem}\label{eq def silting cpx}
Let $\sigma$ be 2-term complex in $K^b(Proj(A))$ and $T=H^0(\sigma)$. The following statements are equivalent.
\begin{enumerate}
\item $\sigma$ is a 2-silting complex;
\item $\sigma$ is a presilting complex, and $\{\sigma[i]:i\in\Zbb\}$ is a set of generators in $D(A)$;
\item $T$ is a silting module with respect to $\sigma$;
\item $(\mathscr{D}_\sigma, T^\circ)$ is a torsion pair in $Mod(A)$.
\end{enumerate}
Moreover, if the conditions above are satisfied, we have  $$\sigma^{\perp_{>0}}=D_{\mathscr{D}_\sigma}^{\leq 0}=\{X\in D(A):H^0(X)\in\mathscr{D}_\sigma,\; H^i(X)=0 \;\forall i>0\}.$$
\end{theorem}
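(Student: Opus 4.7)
The plan is to prove the chain $(1)\Rightarrow(2)$, $(2)\Rightarrow(3)$, $(3)\Leftrightarrow(4)$, and $(3)\Rightarrow(1)$, using Proposition~\ref{silting criterion} and Lemma~\ref{lie in orth} as main tools. The implication $(1)\Rightarrow(2)$ just forgets the coproduct-closure clause in Proposition~\ref{silting criterion}(2). For $(3)\Leftrightarrow(4)$: if $T$ is silting then $\mathscr{D}_\sigma=Gen(T)$ is closed under extensions by Lemma~\ref{ext-orthogonality1}(1), hence is a torsion class, and Remark~\ref{a few observations}(1) supplies the torsion pair $(\mathscr{D}_\sigma,T^\circ)$; conversely, if $(\mathscr{D}_\sigma,T^\circ)$ is a torsion pair and $M\in\mathscr{D}_\sigma$, then the trace-quotient $M/\tau_T(M)$ belongs to $\mathscr{D}_\sigma\cap T^\circ=0$, so $M\in Gen(T)$, while the reverse inclusion is free from $T\in\mathscr{D}_\sigma$ and quotient-closure.

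For $(2)\Rightarrow(3)$ I would leverage the 2-term structure of $\sigma$: a direct chain-map calculation shows that for any module $N$ one has $Hom_{D(A)}(\sigma[i],N)=0$ whenever $i\notin\{-1,0\}$, while $Hom_{D(A)}(\sigma,N)\cong Hom_A(T,N)$ and $Hom_{D(A)}(\sigma[-1],N)=Hom_{D(A)}(\sigma,N[1])$ vanishes precisely when $N\in\mathscr{D}_\sigma$ by Lemma~\ref{ext-orthogonality1}(3). Consequently, any module in $\mathscr{D}_\sigma\cap T^\circ$ is right-orthogonal to every $\sigma[i]$ in $D(A)$, and so must vanish by the generation hypothesis. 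Applied to the trace-quotient of an arbitrary $M\in\mathscr{D}_\sigma$ (which lies in $\mathscr{D}_\sigma\cap T^\circ$ by Lemma~\ref{ext-orthogonality1}(1) and the construction of $\tau_T(M)$), this forces $M\in Gen(T)$, yielding $\mathscr{D}_\sigma=Gen(T)$ and hence that $T$ is silting with respect to $\sigma$.

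For $(3)\Rightarrow(1)$ I appeal to Proposition~\ref{silting criterion}$(2)\Leftrightarrow(1)$: the presilting condition together with coproduct-closure of $\sigma^{\perp_{>0}}\cap D^{\leq 0}$ follows from $T$ being partial silting via Lemma~\ref{lie in orth}(3), observing that $Hom_{D(A)}(\sigma,\sigma^{(I)}[i])=0$ for $i\geq 2$ is automatic because $\sigma$ is 2-term. Generation is the substantial step: given $Y\in D(A)$ with $Hom_{D(A)}(\sigma[i],Y)=0$ for all $i$, shifting reduces the claim to $H^0(Y)=0$. Feeding the triangles $\tau^{\leq -1}Y\to Y\to \tau^{\geq 0}Y$ and $H^0(Y)\to \tau^{\geq 0}Y\to \tau^{\geq 1}Y$ into $Hom_{D(A)}(\sigma,-)$ (and into $Hom_{D(A)}(\sigma,-[1])$), and exploiting the vanishings $Hom_{D(A)}(\sigma,D^{\leq -2})=0=Hom_{D(A)}(\sigma,D^{\geq 1})$ that follow from $\sigma$ living only in degrees $-1,0$, one extracts $Hom_{D(A)}(\sigma,H^0(Y))=0=Hom_{D(A)}(\sigma,H^0(Y)[1])$; translated back to modules this says $H^0(Y)\in T^\circ\cap \mathscr{D}_\sigma$, which is trivial under (3). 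The moreover assertion then follows from Proposition~\ref{silting criterion}(4) (giving $\sigma^{\perp_{>0}}\subseteq D^{\leq 0}$) combined with Lemma~\ref{lie in orth}(1). The delicate step is precisely this cohomological reduction, where every one of the key Hom-vanishings ultimately rests on $\sigma$ being concentrated in exactly the two degrees $-1$ and $0$.
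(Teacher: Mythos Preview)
Your proposal is correct, and the overall logic is sound. It differs from the paper's route in one substantive place, and there is one justification you should make explicit.

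\textbf{Comparison with the paper.} The paper establishes $(1)\Leftrightarrow(2)$ as a self-contained pair (via Proposition~\ref{silting criterion} in both directions) and then runs the chain $(1)\Rightarrow(3)\Rightarrow(4)\Rightarrow(1)$. For $(1)\Rightarrow(3)$ the paper works in the derived category: given $M\in\mathscr{D}_\sigma$ it forms the universal map $u:\sigma^{(I)}\to M$, takes its cone $C$, and shows $H^0(C)=0$ by a truncation argument, forcing $H^0(u)$ to be surjective. Your route $(2)\Rightarrow(3)$ bypasses this entirely by a module-level trace argument: you observe that any module in $\mathscr{D}_\sigma\cap T^\circ$ is right-orthogonal to every $\sigma[i]$ and hence zero by generation, and apply this to $M/\tau_T(M)$. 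This is arguably cleaner and avoids the cone analysis. Your generation argument in $(3)\Rightarrow(1)$ is essentially the paper's $(4)\Rightarrow(1)$, and your ``moreover'' is actually shorter than the paper's, which proves the reverse inclusion by passing to the co-aisle rather than invoking the ``if and only if'' in Lemma~\ref{lie in orth}(1) directly.

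\textbf{A point to make explicit.} In both $(2)\Rightarrow(3)$ and $(4)\Rightarrow(3)$ you assert $M/\tau_T(M)\in T^\circ$, citing only ``the construction of $\tau_T(M)$''. This is not automatic: a map $T\to M/\tau_T(M)$ need not lift to $M$ unless $Ext^1_A(T,\tau_T(M))=0$ (compare the proof of Lemma~\ref{ext-projectivity}). You do have this vanishing in both cases---under $(2)$ the presilting hypothesis gives $T^{(I)}\in\mathscr{D}_\sigma$ for every set $I$, whence $Gen(T)\subseteq\mathscr{D}_\sigma\subseteq T^{\perp_1}$ by Lemma~\ref{ext-orthogonality1}; under $(4)$ one first notes $T\in{}^\circ(T^\circ)=\mathscr{D}_\sigma$ and then uses that $\mathscr{D}_\sigma$ is a torsion class to reach the same chain of inclusions---but this step should be spelled out rather than absorbed into ``the construction''.
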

\begin{proof}
(1)$\Rightarrow$(2): This follows from Proposition \ref{silting criterion}.

(2)$\Rightarrow$(1): By Proposition \ref{silting criterion}, we have to  show that $\sigma^{\perp_{>0}}$ is contained in $D^{\leq 0}$.
Let $X$ be an object in $\sigma^{\perp_{>0}}$ and consider its triangle decomposition with respect to the canonical t-structure in $D(A)$
$$\tau^{\leq 0}X\rightarrow X\rightarrow \tau^{\geq 1}X\rightarrow (\tau^{\leq 0}X)[1].$$
It is clear that  $Hom_{D(A)}(\sigma[i],\tau^{\geq 1}X)=0$ for $i\geq 0$. Moreover, applying $Hom_{D(A)}(\sigma[i],-)$, with $i<0$ to the triangle we get by assumption that $Hom_{D(A)}(\sigma,X[-i])=0$, and also that $Hom_{D(A)}(\sigma,(\tau^{\leq 0}X)[-i+1])=0$ since $(\tau^{\leq 0}X)[-i+1]$ lies in $D^{\leq -2}$ for all $i<0$. Therefore, we have that $Hom_{D(A)}(\sigma[i],\tau^{\geq 1}X)=0$ for all $i \in \Zbb$, and $\tau^{\geq 1}X=0$ as $\{\sigma[i]:i\in\Zbb\}$ is a set of generators for $D(A)$. 

(1)$\Rightarrow$(3): Combining Proposition \ref{silting criterion} with Lemma \ref{lie in orth}, we see that $T$ is partial silting, and so $Gen(T)\subseteq \mathscr{D}_\sigma$. Let now $M$ be a module in $\mathscr{D}_\sigma$ and take the universal map $u:\sigma^{(I)}\rightarrow M$, where $I=Hom_{D(A)}(\sigma, M)$. We will show that $H^0(u):T^{(I)}\rightarrow M$ is a surjection. For this purpose, we consider the triangle $$\sigma^{(I)}\stackrel{u}{\rightarrow} M \to C\to \sigma^{(I)}[1]$$ and prove that  $H^0(C)=0$. We use the generating property of the set $\{\sigma[i]:i\in\Zbb\}$, see (2) above. Note that the long exact sequence of cohomologies for the  triangle above shows that there is a surjection $M\rightarrow H^0(C)$. Hence, the module $H^0(C)$ lies in the torsion class $\mathscr{D}_\sigma$, and by Lemma \ref{lie in orth}, it lies also in $\sigma^{\perp_{>0}}$, that is, $Hom_{D(A)}(\sigma,H^0(C)[1])=0$. Since $\sigma$ is a two -term complex, it remains to see that $Hom_{D(A)}(\sigma,H^0(C))=0$. As $C$ lies in $D^{\leq 0}$, we have the following canonical triangle given by the standard t-structure in $D(A)$ 
$$\tau^{\leq -1}C\rightarrow C\rightarrow H^0(C)\rightarrow \tau^{\leq -1}C[1].$$
Now, on the one hand, since $\sigma$ is presilting, it follows from the definition of $C$ that $Hom_{D(A)}(\sigma, C)=0$. On the other hand, since $\sigma$ is a 2-term complex we also get that $Hom_{D(A)}(\sigma,\tau^{\leq -1}C[1])=0$. Therefore, we have $Hom_{D(A)}(\sigma,H^0(C))=0$, as wanted.

(3)$\Rightarrow$(4): This follows immediately from Remark \ref{a few observations}(1) as $\mathscr{D}_\sigma=Gen(T)$.

(4)$\Rightarrow$(1): Suppose that $(\mathscr{D}_\sigma, T^\circ)$ is a torsion pair. Then clearly $T$ is partial silting with respect to $\sigma$, 
which implies by Lemma \ref{lie in orth} that $\sigma$ is presilting and $\sigma^{\perp_{>0}}\cap D^{\leq 0}$ is closed for coproducts in $D(A)$. By Proposition \ref{silting criterion}, it remains to show that $\{\sigma[i]:i\in\mathbb{Z}\}$ generates $D(A)$. Let $X$ be an object of $D(A)$ such that $Hom_{D(A)}(\sigma,X[i])=0$ for all $i \in \Zbb$. Since $\sigma$ is concentrated in degrees $-1$ and $0$, this is equivalent to $Hom_{D(A)}(\sigma,\tau^{\leq 0}(X[i]))=0$ for all $i \in \Zbb$. Then $Hom_{D(A)}(\sigma,H^i(X))=0$, and thus  $H^i(X)$ lies in $T^\circ$ for all $i \in \Zbb$. Consider the triangle 
$$H^0(X[i-1])[-1]\rightarrow \tau^{\leq -1}(X[i-1])\rightarrow \tau^{\leq 0}(X[i-1])\rightarrow H^0(X[i-1])$$
and apply to it the functor $Hom_{D(A)}(\sigma,-)$. Since $$Hom_{D(A)}(\sigma,\tau^{\leq 0}(X[i-1]))=0=Hom_{D(A)}(\sigma,H^0(X[i-1])[-1]),$$ we conclude that
$$0=Hom_{D(A)}(\sigma,\tau^{\leq -1}(X[i-1]))=Hom_{D(A)}(\sigma,\tau^{\leq 0}(X[i])[1]),$$
thus showing that  $\tau^{\leq 0}(X[i])$ belongs to $\sigma^{\perp_{>0}}$ for all $i \in \Zbb$. By Lemma \ref{lie in orth}, it follows that $H^i(X)=H^0(\tau^{\leq 0}X[i])$ 
lies in $\mathscr{D}_\sigma$ for all $i \in \Zbb$. Since the pair $(\mathscr{D}_\sigma, T^\circ)$ is a torsion pair, we conclude that $H^i(X)=0$ for all $i \in \Zbb$, as wanted.

Let us now assume that the equivalent conditions (1)-(4) hold. In particular, $(\mathscr{D}_\sigma,T^\circ)$ is a torsion pair in $Mod(A)$ and so Example \ref{example t-str}(3) gives us a t-structure $(D^{\leq 0}_{\mathscr{D}_\sigma},D^{\geq 0}_{T^\circ})$. We want to prove that $\sigma^{\perp_{>0}}=D_{\mathscr{D}_\sigma}$. Proposition \ref{silting criterion} shows that $aisle(\sigma)=\sigma^{\perp_{>0}}\subseteq D^{\leq 0}$ and, thus, by Example \ref{example t-str}(4) and Lemma \ref{lie in orth}(1),
$$\sigma^{\perp_{>0}}=aisle(\sigma)\subseteq \{X\in D(A): H^0(X)\in \mathscr{D}_\sigma, H^i(X)=0, \;\forall i>0\}=D^{\leq 0}_{\mathscr{D}_\sigma}.$$
We will show that $aisle(\sigma)^{\perp_0}\subseteq D^{\geq 1}_{T^\circ}$, thus proving that the inclusion above is in fact an equality. Let $X$ be an object in  $aisle(\sigma)^{\perp_0}=\sigma^{\perp_{\leq 0}}$. It is clear that $Hom_{D(A)}(\sigma,(\tau^{\leq -1}X)[i])=0$ for all $i>0$. Consider now the triangle
\begin{equation}\nonumber
(\tau^{\geq 0}X)[i-1]\rightarrow (\tau^{\leq -1}X)[i] \rightarrow X[i]\rightarrow (\tau^{\geq 0}X)[i].
\end{equation}
Since $\sigma$ lies in $D^{\leq 0}$, we have that $Hom_{D(A)}(\sigma, (\tau^{\geq 0}X)[i-1])=0$ for all $i\leq 0$ and also, by the assumption on $X$, $Hom_{D(A)}(\sigma,X[i])=0$ for all $i\leq 0$. This shows that $Hom_{D(A)}(\sigma,(\tau^{\leq -1}X)[i])=0$ for all $i\leq 0$. Since $\{\sigma[i]:i\in\Zbb\}$ is a set of generators for $D(A)$, we conclude that $\tau^{\leq -1}X=0$. By Lemma \ref{lie in orth}(2), we get that $H^0(X)$ lies in $T^\circ$.
\end{proof}

\begin{remark}\label{silting equivalence}
(1) Theorem \ref{eq def silting cpx} shows that the t-structure generated by a silting complex $\sigma$ equals both the t-structure $(\sigma^{\perp_{>0}},\sigma^{\perp_{< 0}})$ studied by Hoshino-Kato-Miyachi in \cite{HKM} and the t-structure associated  to the torsion pair $(\mathscr{D}_\sigma,T^\circ)$ in the sense of  Happel-Reiten-Smal{\o} \cite{HRS}.

\smallskip

(2) We know from Theorem \ref{eq def silting cpx} that the cohomology $H^0(\sigma)$ is a silting module for any 2-silting complex $\sigma$. 
Further,  $\sigma$ and $\gamma$ are   
equivalent 2-silting complexes if and only if the silting modules $T=H^0(\sigma)$ and $T'=H^0(\gamma)$ are equivalent. 
Indeed, recall  that the silting modules $T$ and $T'$ are equivalent if $Add(T)=Add(T')$, which in turn means that $Gen(T)=Gen(T')$.
So the only-if-part follows from the fact that $H^0$ commutes with coproducts. Conversely, if $T$ and $T'$ are equivalent, then they generate the same torsion pair, and therefore the associated Happel-Reiten-Smal\o\ t-structures coincide, which means that $\sigma^{\perp_{>0}}=\gamma^{\perp_{>0}}$ by Theorem \ref{eq def silting cpx}.
\end{remark}

We finish by specialising Theorem \ref{bijections general} to 2-term complexes. For the bijection between (1) and (2) see also a related result in \cite{Wei3}. 

\begin{theorem}\label{bijections complexes}
There are bijections between
\begin{enumerate}
\item equivalence classes of 2-silting complexes;
\item equivalence classes of silting $A$-modules;
\item 2-silting t-structures in $D(A)$;
\item co-t-structures $(\Ucal_{\geq 0},\Ucal_{\leq 0})$ in $D(A)$ with $D^{\leq -1}\subseteq \Ucal_{\leq 0}\subseteq D^{\leq 0}$ and $\Ucal_{\leq 0}$  closed for coproducts in $D(A)$.
\end{enumerate}
\end{theorem}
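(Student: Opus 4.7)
The plan is to derive these bijections as a specialization of Theorem \ref{bijections general} to the 2-term setting, combined with the module--complex correspondence furnished by Theorem \ref{eq def silting cpx} and Remark \ref{silting equivalence}.

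For the bijection $(1) \leftrightarrow (2)$, I would send a 2-silting complex $\sigma$ to its zeroth cohomology $H^0(\sigma)$, which is a silting module by Theorem \ref{eq def silting cpx}. By Remark \ref{silting equivalence}(2) this assignment is well-defined and injective on equivalence classes. Conversely, given a silting module $T$, by definition there exists a projective presentation $\sigma:P_{-1}\to P_0$ such that $T$ is silting with respect to $\sigma$; Theorem \ref{eq def silting cpx} guarantees that such a $\sigma$ is a 2-silting complex, and its equivalence class is independent of the choice of projective presentation, again by Remark \ref{silting equivalence}(2).

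For the bijection $(1) \leftrightarrow (3)$, Lemma \ref{silting t-str} applied with $n = 2$ shows that a silting complex $\sigma$ is 2-silting if and only if the associated silting t-structure $(\sigma^{\perp_{>0}}, \sigma^{\perp_{<0}})$ is 2-silting. Combining this with the bijection $(1) \leftrightarrow (2)$ in Theorem \ref{bijections general} (the assignment $\Psi$) yields the desired correspondence.

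For the bijection $(1) \leftrightarrow (4)$, I would verify that, under the assignment $\sigma \mapsto ({}^{\perp_0}(\sigma^{\perp_{>0}}[1]), \sigma^{\perp_{>0}})$ from Theorem \ref{bijections general} (the assignment $\Phi$), 2-silting complexes correspond exactly to the co-t-structures described in (4). If $\sigma$ is 2-silting, then $\sigma^{\perp_{>0}} \subseteq D^{\leq 0}$ by Proposition \ref{silting criterion}, and since $\sigma$ is concentrated in degrees $-1$ and $0$ a direct computation in the homotopy category of projectives gives $Hom_{D(A)}(\sigma, D^{\leq -2}) = 0$, hence $D^{\leq -1} \subseteq \sigma^{\perp_{>0}}$. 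Conversely, if $D^{\leq -1} \subseteq \sigma^{\perp_{>0}} \subseteq D^{\leq 0}$, then $Hom_{D(A)}(\sigma, D^{\leq -2}) = 0$, and a stupid-truncation argument analogous to the one at the end of the proof of Lemma \ref{silting t-str} (applied with $n = 2$) forces $\sigma$, up to equivalence, to be concentrated in degrees $-1$ and $0$, hence 2-silting. The main technical obstacle is precisely this dimension-counting step: verifying that the specialization of the intermediate bounds on the (co-)t-structure matches the 2-term condition on $\sigma$. The bijectivity itself is inherited from Theorem \ref{bijections general}.
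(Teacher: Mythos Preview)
Your proposal is correct and follows essentially the same approach as the paper: the $H^0$ assignment for $(1)\leftrightarrow(2)$ via Theorem~\ref{eq def silting cpx} and Remark~\ref{silting equivalence}, Lemma~\ref{silting t-str} with $n=2$ for $(1)\leftrightarrow(3)$, and the restriction of $\Phi$ from Theorem~\ref{bijections general} for $(1)\leftrightarrow(4)$. You actually spell out the converse direction of $(1)\leftrightarrow(4)$ (via the stupid-truncation argument) more explicitly than the paper, which simply asserts that $\Phi$ restricts; but this is the same argument, already contained in Lemma~\ref{silting t-str}.
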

\begin{proof}
Consider the following assignments.
$$\begin{array}{ccccl}
\text{Bijection} &&&& \text{Assignment}\\
\hline
 (1)\rightarrow (2) &&&&  H^0:\;\sigma\,\mapsto\; H^0(\sigma)\\
 (1)\rightarrow (3) &&&& \Psi:\;\sigma\,\mapsto\; (\sigma^{\perp_{>0}},\sigma^{\perp_{<0}})\\
 (1)\rightarrow (4) &&&& \Phi:\;\sigma\,\mapsto\; ({}^{\perp_0}(\sigma^{\perp_{>0}}[1]),\sigma^{\perp_{>0}})
\end{array}$$
Remark \ref{silting equivalence}(2) above
shows that $H^0$ is well-defined and injective. The surjectivity  follows from Theorem \ref{eq def silting cpx}, where it is shown that if $T$ is a silting module with respect to a projective presentation $\sigma$, then $\sigma$ is a 2-silting complex.
Moreover, it follows from Lemma \ref{silting t-str} that the map $\Psi$ from Theorem \ref{bijections general} induces  a bijection between equivalence classes of 2-silting complexes and 2-silting t-structures.
Finally the co-t-structure $({}^{\perp_0}(\sigma^{\perp_{>0}}[1]), \sigma^{\perp_{>0}})$ in $D(A)$ associated to a 2-silting complex $\sigma$ clearly satisfies $D^{\leq -1}\subseteq \sigma^{\perp_{>0}}\subseteq D^{\leq 0}$. The map $\Phi$ from Theorem \ref{bijections general}  therefore restricts to the stated bijection.
\end{proof}


\begin{thebibliography}{99}
\bibitem{AIR} {\sc T. Adachi, O. Iyama, I. Reiten}, \emph{$\tau$-tilting theory}, to appear Comp. Math., preprint arXiv:1210.1036.
\bibitem{AI} {\sc T. Aihara, O. Iyama}, \emph{Silting mutation in triangulated categories}, J. Lond. Math. Soc. \textbf{85} (2012), no. 3, 633-668.
\bibitem{AJS1} {\sc L. Alonso Tarr\'io, A. Jerem\'ias Lop\'ez, M. Souto Salorio}, \emph{Localization in Categories of Complexes and Unbounded Resolutions}, Canad. J. Math. \textbf{52} (2000), 225--247.
\bibitem{AJS} {\sc L. Alonso Tarr\'io, A. Jerem\'ias Lop\'ez, M. Souto Salorio}, \emph{Constructions of t-structures and equivalences of derived categories}, Trans. Amer. Math. Soc. \textbf{355}, no.~6 (2003), 2523--2543.
\bibitem{AHT}{\sc L.~Angeleri H\"ugel, D.~Herbera,  J.~Trlifaj}, \emph{Tilting modules and Gorenstein rings.} 
{Forum Math.}, {\bf 18} (2006), 217--235.
\bibitem{APST} {\sc L. Angeleri H\"ugel, D.  Posp{\'\i}{\v s}il,
J. \v{S}\v{t}ov\'\i\v{c}ek,  J. Trlifaj}, \emph{Tilting, cotilting, and spectra of commutative noetherian rings},
Trans. Amer. Math. Soc.,  to appear.
\bibitem{AS1} {\sc L. Angeleri H\"ugel, J. S\'anchez}, \emph{Tilting modules arising from ring epimorphisms}, Algebr. Represent. Theory \textbf{14} (2011), no. 2, 217--246.
\bibitem{AS2}{\sc L.~Angeleri H\"ugel,  J. S\'anchez}, \emph{Tilting modules over tame hereditary algebras}, {Journal Reine Angew. Math.} {\bf 682} (2013), 1--48.
\bibitem{ATT} {\sc L. Angeleri H\"ugel, A. Tonolo, J. Trlifaj}, \emph{Tilting preenvelopes and cotilting precovers}, Algebr. Represent. Theory \textbf{4} (2001), 155-170.
\bibitem{AT0} {\sc L. Angeleri H\"ugel, J. Trlifaj}, \emph{Tilting theory and the finitistic dimension conjectures}, Trans. Amer. Math. Soc. \textbf{354} (2002), 4345--4358.
\bibitem{AS} {\sc M. Auslander, S. Smal\o}, \emph{Almost split sequences in subcategories}, J.Algebra \textbf{69} (1981), 426--454.
\bibitem{BBD} {\sc A. Beilinson, J. Bernstein, P. Deligne}, \emph{Analyse et topologie sur les espaces singuliers}, Ast{\'e}risque, vol. \textbf{100} (1982), Soc. Math. France.
\bibitem{Bo} {\sc M. Bondarko}, \emph{Weight structures vs. {$t$}-structures; weight filtrations, spectral sequences, and complexes (for motives and in general)},  J. K-Theory \textbf{6} (2010), no.~3, 387--504.
\bibitem{C} {\sc R. Colpi}, \emph{Some remarks on equivalences between categories of modules}, Comm. Algebra \textbf{18}, no.6 (1990), 1935--1951.
\bibitem{C2} {\sc R. Colpi}, \emph{Tilting modules and $\ast$-modules}, Comm. Algebra \textbf{21}, no.4 (1993), 1095--1102.
\bibitem{CDT} {\sc R. Colpi, G. D'Este, A. Tonolo}, \emph{Quasi-tilting modules and counter equivalences}, J. Algebra \textbf{191} (1997), 461--494.
\bibitem{CM}{\sc R.Colpi and C.Menini}, \emph{On the structure of $\ast$-modules}, J.Algebra {\bf 158}, (1993), 400--419.
\bibitem{CT} {\sc R. Colpi, J. Trlifaj}, \emph{Tilting modules and tilting torsion theories}, J. Algebra \textbf{178} (1995), 614--634.
\bibitem{DH} {\sc G. D'Este, D. Happel}, \emph{Representable equivalences are represented by tilting modules}, Rend. Sem. Mat. Univ. Padova \textbf{83} (1990), 77--80.
\bibitem{HKM}  {\sc M. Hoshino, Y. Kato, J. Miyachi}, \emph{On t-structures and torsion theories induced by compact objects}, J.Pure  Appl.Algebra \textbf{167} (2002) 15--35.
\bibitem{HRS} {\sc D. Happel, I. Reiten, S. Smal{\o}}, \emph{Tilting in abelian categories and quasitilted algebras}, Mem. Amer. Math. Soc. \textbf{120}  (1996), no.~575, viii+ 88.
\bibitem{IJY} {\sc O. Iyama, P. J\o rgensen, D. Yang}, \emph{Intermediate co-t-structures, two-term silting objects, $\tau$-tilting modules, and torsion classes},  arXiv:1311.4891.
\bibitem{J} {\sc G. Jasso}, \emph{Reduction of $\tau$-tilting modules and torsion pairs}, arXiv:1302.2709v2.
\bibitem{Ke} {\sc B. Keller}, \emph{On the construction of triangle equivalence}, in  \emph{Derived equivalences of group rings}, edited by S. K\"onig, A. Zimmermann. Lecture Notes in Mathematics \textbf{1685}, Springer, 1998. 
\bibitem{KN} {\sc B. Keller, P. Nicol{\'a}s}, \emph{Cluster hearts and cluster tilting  objects}, in preparation.
\bibitem{KV} {\sc B. Keller, D. Vossieck}, \emph{Aisles in derived categories}, Bull.  Soc. Math. Belg. S{\'e}r. A \textbf{40} (1988), no.~2, 239--253.
\bibitem{KY} {\sc S. Koenig, D. Yang}, \emph{Silting objects, simple-minded collections,  $t$-structures and co-$t$-structures for finite-dimensional algebras}, to appear in Doc.Math., preprint arXiv:1203.5657. 
\bibitem{K} {\sc H. Krause}, \emph{A short proof for Auslander's defect formula}, Special issue on linear algebra methods in representation theory. Linear Algebra Appl. \textbf{365} (2003), 267--270.
\bibitem{KS}{\sc H. Krause, M. Saor\'{\i}n}, \emph{On minimal approximations of modules}, In: Trends in the representation theory of finite dimensional algebras (ed. by E.~L.~Green and B.~Huisgen-Zimmermann), Contemp. Math. {\bf 229} (1998) 227-236.
\bibitem{MS} {\sc F. Marks, J. \v{S}\v{t}ov\'\i\v{c}ek}, \emph{Universal localisations via torsion pairs}, in preparation.
\bibitem{MSSS} {\sc O. Mendoza Hern\'andez, E. S\'aenz Valadez, V.Santiago Vargas, M.Souto Salorio}, \emph{Auslander-Buchweitz context and co-t-structures}, Appl. Categ. Structures \textbf{21}, no.5 (2013), 417-440.
\bibitem{N} {\sc A. Neeman}, \emph{The connection between the K-theory localisation theorem of Thomason, Trobaugh and Yao and the smashing subcategories of Bousfield and Ravenel}, Ann. scient. Bc. Norm. Sup., $4^e$ serie, 1. \textbf{25} (1992), 547--566.
\bibitem{NS} {\sc P. Nicol\'as, M. Saor\'in}, \emph{Parametrizing recollement data for triangulated categories}, J.Algebra \textbf{322}, no.4 (2009), 1220--1250.
\bibitem{O}{\sc F. Okoh}, \emph{Cotorsion modules over tame finite-dimensional hereditary algebras}, Springer Lecture Notes in Math.~{\bf 903} (1980), 263-269.
\bibitem{Pauk} {\sc D. Pauksztello}, \emph{Compact corigid objects in triangulated categories and co-{$t$}-structures}, Cent. Eur. J. Math. \textbf{6} (2008), no.~1, 25--42.
\bibitem{Ra}{\sc D.C. Ravanel}, \emph{Localization with respect to certain periodic homology theories}, Amer. J. of Math. \textbf{105} (1984), 351--414.
\bibitem{RR} {\sc  I.\ Reiten and C.M.\ Ringel}, \textit{Infinite dimensional representations of canonical algebras},
Canad.\ J.\ Math. \textbf{58} (2006), 180--224.
\bibitem{R}{\sc C.~M.~Ringel},
\emph{Infinite dimensional representations of finite dimensional hereditary
algebras}. {Symposia Math.} \textbf{XXIII} (1979), 321--412.
\bibitem{T} {\sc J. Trlifaj}, \emph{Every $\ast$-module is finitely generated}, J. Algebra \textbf{169}, no.2 (1994) 392--398.
\bibitem{Wei1} {\sc J. Wei}, \emph{Semi-tilting complexes}, Israel J. Math.  \textbf{194}, no.~2 (2013), 871--893.
\bibitem{Wei2} {\sc J. Wei}, \emph{$\tau$-tilting theory and $\ast$-modules}, manuscript (2014).
\bibitem{Wei3} {\sc J. Wei}, \emph{Large support $\tau$-tilting modules}, manuscript (2014).
\end{thebibliography}
\end{document}